\newtheorem{theorem}{Theorem}[section]
\numberwithin{equation}{subsection}
\numberwithin{figure}{subsection}
\numberwithin{theorem}{subsection}
\newtheorem{proposition}[theorem]{Proposition}
\newtheorem{lemma}[theorem]{Lemma}
\newtheorem{corollary}[theorem]{Corollary}
\newtheorem{conjecture}[theorem]{Conjecture}
\theoremstyle{definition}
\newtheorem{definition}[theorem]{Definition}
\theoremstyle{remark}
\newtheorem{remark}[theorem]{Remark}
\newenvironment{equations}
    {\begin{equation}\begin{aligned}}
    {\end{aligned}\end{equation}}
\theoremstyle{plain}
\newmdtheoremenv[%
   backgroundcolor=cyan!10,%
   outerlinecolor=black,%
   innerbottommargin = \topskip,%
   roundcorner=4]
{QUESTION}{Question}
\newcommand{\N}{\mathbb{N}}
\newcommand{\Z}{\mathbb{Z}}
\newcommand{\Q}{\mathbb{Q}}
\newcommand{\R}{\mathbb{R}}
\newcommand{\C}{\mathbb{C}}
\newcommand{\iu}{\mathrm{i}\mkern1mu}
\newcommand*{\Chi}{\mbox{\Large$\chi$}}
\newcommand{\Fourier}[1]{\mathcal{F}\mathopen{}\left\{#1\right\}\mathclose{}}
\DeclareMathOperator{\lct}{lct}
\newcommand{\mathbbold}[1]{\mbox{\fontencoding{U}\fontfamily{bbold}\fontseries{m}\fontshape{n}\selectfont#1}}
\newcommand{\bbI}{\mathbbold{1}}
\newcommand{\I}[1]{\bbI_{\mbox{$#1$}}}
\DeclarePairedDelimiter\floor{\lfloor}{\rfloor}
\DeclarePairedDelimiter\fract{\{}{\}}
\DeclarePairedDelimiter\ang{\langle}{\rangle}
\definecolor{blue-violet}{rgb}{0.54, 0.17, 0.89}
\definecolor{Blue}{rgb}{0.01, 0.28, 1.0}
\definecolor{gGreen}{rgb}{0.2, 0.8, 0.2}
\definecolor{Green}{rgb}{0.04, 0.85, 0.32}
\begin{document}

\title[Limit distribution of Hodge spectral exponents]{Limit distribution of Hodge spectral exponents of irreducible plane curve singularities}

\author[M. Alberich-Carrami\~nana]{Maria Alberich-Carrami\~nana}
\address{
    Departament de Matem\`atiques  and  Institut de Matem\`atiques de la UPC-BarcelonaTech (IMTech)\\  Universitat Polit\`ecnica de Catalunya \\ Av.~Diagonal 647, Barcelona 08028; and Institut de Rob\`otica i Inform\`atica Industrial\\ CSIC-UPC \\ Llorens i Artigues 4-6, Barcelona 08028, Spain.
    } 
\email{Maria.Alberich@upc.edu}

\author[J. \`Alvarez Montaner]{Josep \`Alvarez Montaner}
\address{
    Departament de Matem\`atiques  and  Institut de Matem\`atiques de la UPC-BarcelonaTech (IMTech)\\  Universitat Polit\`ecnica de Catalunya \\ Av.~Diagonal 647, Barcelona 08028; and Centre de Recerca Matem\`atica (CRM), Bellaterra, Barcelona 08193.
    } 
\email{Josep.Alvarez@upc.edu}

\author[R. G\'omez-L\'opez]{Roger G\'omez-L\'opez}
\address{
    Departament de Matem\`atiques \\  Universitat Polit\`ecnica de Catalunya \\ Av.~Diagonal 647, Barcelona 08028
    }
\email{Roger.Gomez.Lopez@upc.edu}

\thanks{
    All three authors are partially supported by grant  PID2019-103849GB-I00 (MCIN/AEI/10.13039/501100011033) and AGAUR grant 2021 SGR 00603. JAM is also supported by Spanish State Research Agency, through the Severo Ochoa and Mar\'ia de Maeztu Program for Centers and Units of Excellence in R$\&$D (project CEX2020-001084-M). The third author gratefully acknowledges Secretaria d'Universitats i Recerca del Departament d’Empresa i Coneixement de la Generalitat de Catalunya and Fons Social Europeu Plus for the financial support of his FI Joan Oró predoctoral grant.
    }


\begin{abstract}
We study the distribution of the Hodge spectral exponents of an irreducible plane curve by comparing it with a continuous distribution. We provide a closed formula for this difference in terms of numerical invariants of the curve. We characterize those families of irreducible plane curves whose limit distribution of Hodge spectral exponents is the continuous distribution and we provide intervals of dominating values. 
\end{abstract}

\maketitle

\section{Introduction}\label{section:background:Hodge}

Let $f\colon (\C^{n+1},0)\rightarrow (\C,0)$ be a germ of a holomorphic function (or equivalently a convergent power series $f\in\C\{x_0,\dots,x_n\}$) with an isolated singularity at the origin. 
Using the canonical mixed Hodge structure of the cohomology groups of the Milnor fiber of $f$, Steenbrink \cite{steenbrink1977mixed} defined the {\it Hodge spectrum} of $f$ as the generating function 
$$\operatorname{Sp}_f(T)=\sum_{i=1}^\mu T^{\alpha_i},$$
where  $\mu = \dim_\C \frac{\C\{x_0,\dots,x_n\}}{\left(\frac{df}{dx_0},\dots,\frac{df}{dx_n}\right)}$ is the \textit{Milnor number} and the positive rational numbers 
\begin{equation*}
        0< \alpha_1 \leqslant \dots \leqslant \alpha_\mu < n+1
\end{equation*}
form a discrete set of invariants of the singularity $f$ called \textit{Hodge spectral exponents}  (or simply \textit{spectral numbers}).  
They are symmetric with respect to $(n+1)/2$, i.e. for every $j=1,\dots,\mu$, we have $\alpha_{\mu+1-j} = (n+1) - \alpha_j$ and thus it is 
enough to study them in the interval $\bigl(0,(n+1)/2\bigr]$. Another interesting feature proved by Varchenko \cite{varchenko1982complex} is that  
the Hodge spectral exponents of $f$ are stable under deformations with constant Milnor number $\mu$.

\vskip 2mm

Recall that a deformation of a hypersurface $f(x_0,\dots,x_n)\in\C\{x_0,\dots,x_n\}$ is a family of hypersurfaces $f_{t_1,\dots,t_k}(x_0,\dots,x_n)$ 
for some set of parameters $(t_1,\dots,t_k)\in S\subseteq\C^k$, satisfying $f(x_0,\dots,x_n)=f_{0,\dots,0}(x_0,\dots,x_n)$. Then, in Varchenko's result we are asking that
the Milnor number of $f_{t_1,\dots,t_k}(x_0,\dots,x_n)$ is the same for all $(t_1,\dots,t_k)\in S$.

\vskip 2mm


K.~Saito \cite{Ksaito1983zeroes} considered the normalized spectrum which he denoted as 
 the \textit{characteristic function} 
   \begin{equation*}
        \Chi_f(T) = \frac{1}{\mu} \sum_{i=1}^\mu T^{\alpha_i}.
    \end{equation*}

We may also display the Hodge spectral exponents as a discrete (probability) distribution on $\R$. Namely, The \textit{distribution of the Hodge spectral exponents} is
    \begin{equation*}
        D_f(s) = \frac{1}{\mu} \sum_{i=1}^\mu \delta(s-\alpha_i) ,
    \end{equation*}
    where $\delta(s)$ is Dirac's delta distribution.
Indeed considering either $D_f(s)$ or $\Chi_f(T)$ is equivalent because the characteristic function is the Fourier transform of the distribution of Hodge spectral exponents, i.e.,
    \begin{equation*}
        \Chi_f(T)
        = \Fourier{D_f(s)}(\tau).
    \end{equation*}
Considering the change of variables $  T=e^{2\pi\iu \tau}$
we will treat the dependence of $\Chi_f$ on $T$ and on $\tau$ interchangeably throughout this paper.

\begin{remark}
 Because of the symmetry of the Hodge spectrum, we will be interested in the truncations
   \begin{equation*}
        \Chi^{<1}_f(T) = \frac{1}{\mu} \sum_{\alpha_i<1} T^{\alpha_i} \hskip 2mm, \hskip 1cm D^{<1}_f(s) = \frac{1}{\mu} \sum_{\alpha_i<1} \delta(s-\alpha_i).
    \end{equation*}
 
\end{remark}

\vskip 2mm

\begin{definition}\label{def:N} The {\it continuous distribution} function is  $  N_{n+1} \colon \R\longrightarrow \R$ defined as:
    \begin{align*}
        N_{n+1}(s)
        &= \int_{x_0+\dots+x_n=s} \I{[0,1)}(x_0) \cdots \I{[0,1)}(x_n) \dd{x_0}\dots\dd{x_n}
        \\&= \left(\I{[0,1)} \ast \overset{n+1}{\dots} \ast \I{[0,1)}\right)(s),
    \end{align*}
where   $\I{[0,1)}(s)$ is the indicator function  and $\ast$ denotes the convolution product. One may check that the Fourier transform of $N_{n+1}(s)$ is
    \begin{equation*}
        \Fourier{N_{n+1}(s)}(\tau)=\left(\frac{T-1}{\log{T}}\right)^{n+1}.
    \end{equation*}
\end{definition}

\begin{figure}[htbp!]
   \centering
   \includegraphics[width=0.6\linewidth]{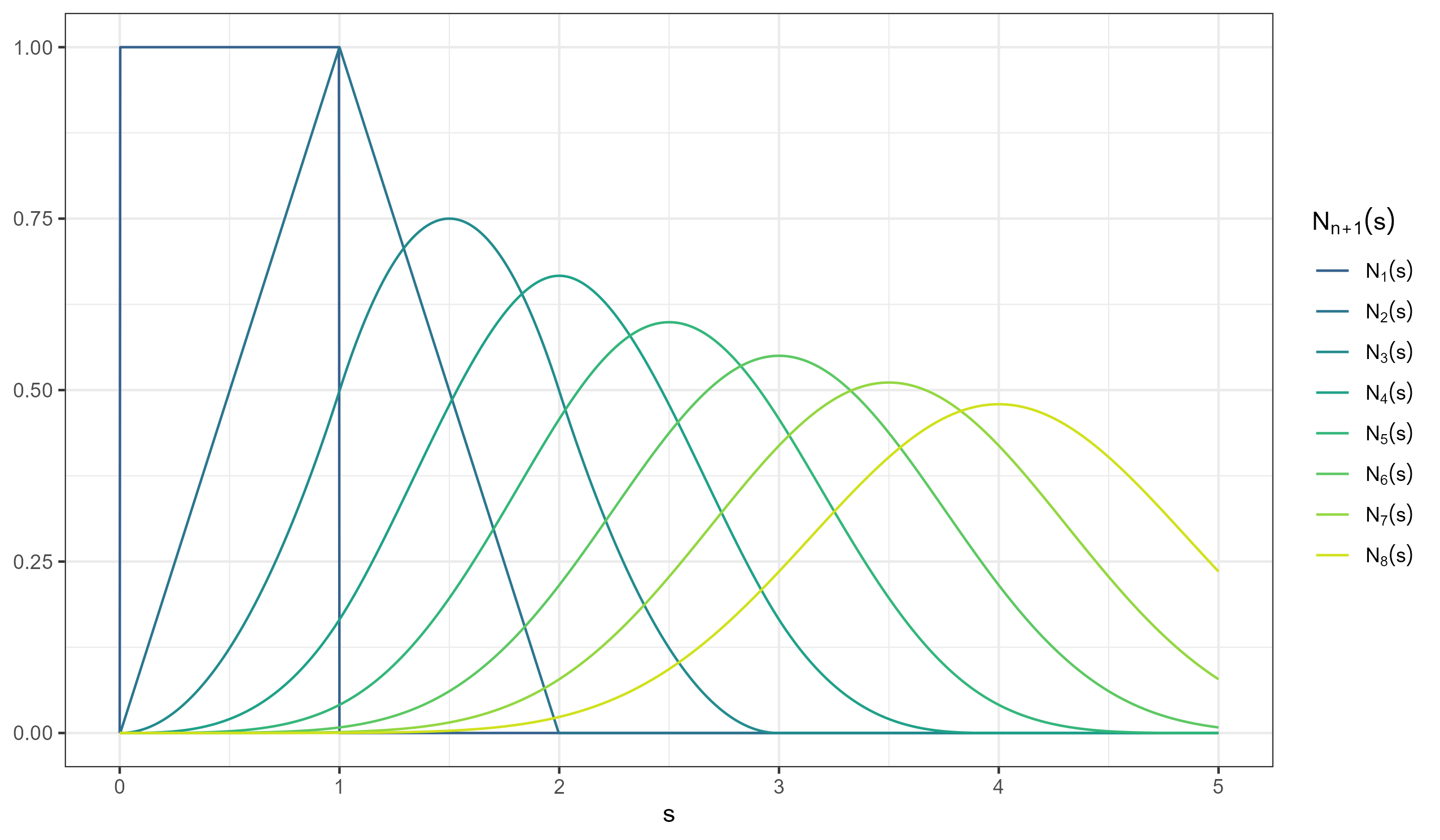}
   \caption{The function $N_{n+1}(s)$ for different values of $n$. 
}
\end{figure}

\vskip 2mm

\begin{definition}\label{def:phi}
We define
      $  \phi_f \colon \left[0,\frac{n+1}{2}\right) \longrightarrow \R$
as the {\it cumulative difference function} between $N_{n+1}(s)$ and $D_f(s)$, that is
    \begin{align*}
        \phi_f(r)
        &= \int_0^r N_{n+1}(s) - D_f(s) \dd{s}
        = \int_0^r N_{n+1}(s) - \frac{1}{\mu} \sum_{i=1}^{\mu} \delta(s-\alpha_i) \dd{s}
        = \int_0^r N_{n+1}(s) \dd{s} - \frac{1}{\mu} \#\{\alpha_i\leqslant r\}.
    \end{align*}
\end{definition}

\begin{definition}\label{def:dominating}
  We say that  $r\in\left[0,\frac{n+1}{2}\right)$ is a \textit{dominating value} if $\phi_f(r) > 0$, or equivalently if
    \begin{equation*}
        \frac{1}{\mu} \#\{\alpha_i\leqslant r\} < \int_0^r N_{n+1}(s) \dd{s}.
    \end{equation*}
\end{definition}

\begin{figure}[htbp!]
    \centering
    \subfloat{\includegraphics[width=7.5cm]{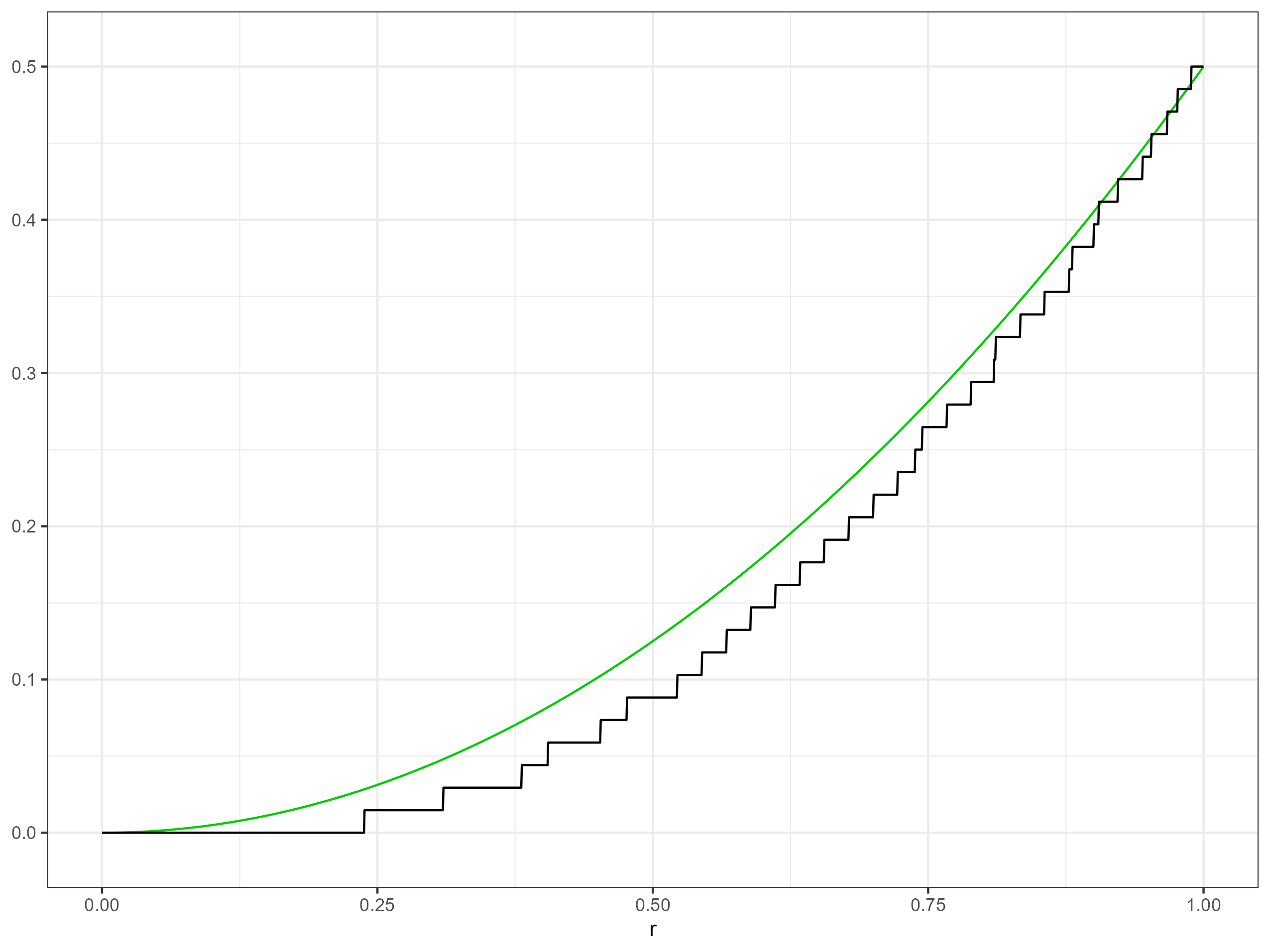} }%
    \qquad
    \subfloat{\includegraphics[width=7.5cm]{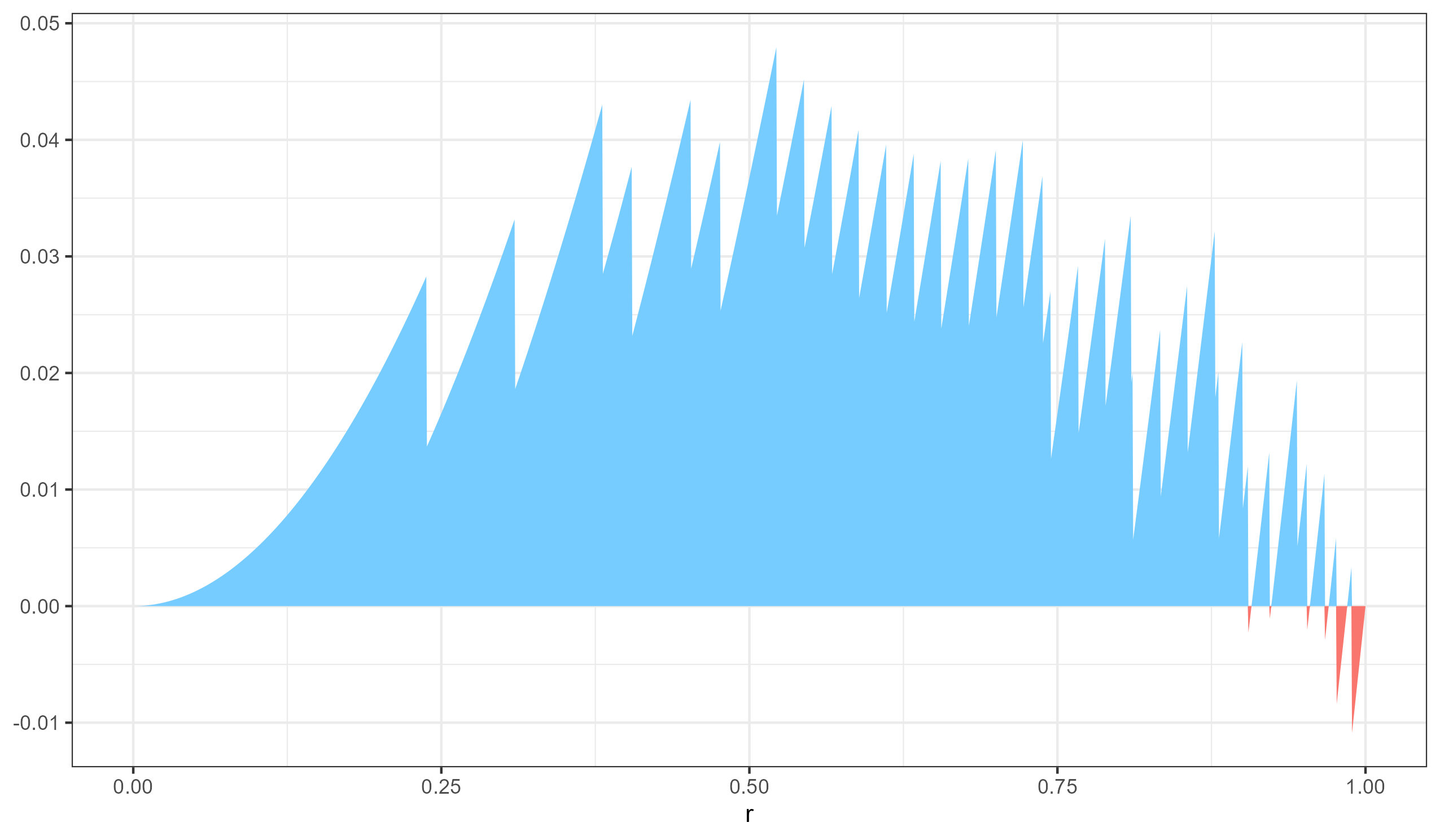} }%
    \caption{ On the left, example of functions $\frac{1}{\mu} \#\{\alpha_i\leqslant r\}$ (black stairs) and $\int_0^r N_{n+1}(s) \dd{s}$ (green line) in the interval $r\in[0,1]$.
    On the right, cumulative difference function $\phi_f(r)$, coloured according to its sign. Both plots corresponds to an irreducible plane curve $f\in\C\{x,y\}$ with Puiseux pairs $(3,4), (2,3)$ in the notation from Definition \ref{def:pairs}.}%
\end{figure}

\vskip 2mm

 K.~Saito  \cite{Ksaito1983zeroes} introduced these notions of cumulative difference function and dominating values. Moreover he formulated the following questions:

\vskip 3mm

\begin{QUESTION}\label{Q1}
For which limits of sequences of hypersurfaces $\left(f^{(i)}\right)_{i\geqslant 0}$ does the distribution of Hodge spectral exponents $D_{f^{(i)}}(s)$ converge to $N_{n+1}(s)$?
   Equivalently, for which limits of $\left(f^{(i)}\right)_{i\geqslant 0}$ does the characteristic function $\Chi_{f^{(i)}}(T)$ converge to $\Fourier{N_{n+1}}(\tau)=\left(\frac{T-1}{\log{T}}\right)^{n+1}$?
\end{QUESTION}

\vskip 3mm
\begin{QUESTION}\label{Q2}
    Given $f$, what is the set of all dominating values?
\end{QUESTION}

\vskip 3mm

The limit of $\left(f^{(i)}\right)_{i\geqslant 0}$ in Question \ref{Q1} has to be specified, since it is not clear a priori which kind of limit one should consider. The few results we may find in the literature all consider different types of limits. K.~Saito  already calculated the following two limits, both of which converge to $N_{n+1}(s)$:

\begin{proposition}[{\cite[(3.7)]{Ksaito1983zeroes}}]
    Let $f\in\C[x_0,\dots,x_n]$ be a quasi-homogeneous polynomial of degree 1 with respect to the weights $r_0,\dots,r_n$, i.e., satisfying $f(\lambda^{r_0}x_0,\dots,\lambda^{r_n}x_n)=\lambda f(x_0,\dots,x_n)$. Then, taking a sequence of such functions with the limit $r_i\rightarrow 0$ for all $i=0,\dots,n$, one has
    \begin{equation*}
        \lim_{r_0,\dots,r_n\rightarrow 0} \Chi_f(T)
        = \left(\frac{T-1}{\log{T}}\right)^{n+1}
        = \Fourier{N_{n+1}(s)}(\tau).
    \end{equation*}
\end{proposition}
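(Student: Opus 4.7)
The plan is a direct computation that combines Steenbrink's explicit description of the Hodge spectrum of a quasi-homogeneous isolated singularity with an elementary limit in one variable.

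First I would produce a closed form for $\Chi_f(T)$. For $f$ quasi-homogeneous of degree $1$ with weights $r_0,\dots,r_n$ and isolated singularity at the origin, the partial derivatives $\partial_k f$ are homogeneous of weight $1-r_k$ and form a regular sequence, so the Milnor algebra is a weighted complete intersection; giving $x_k$ the weight $r_k$, its Hilbert series is
$$H(T)=\prod_{k=0}^n\frac{1-T^{1-r_k}}{1-T^{r_k}}.$$
By Steenbrink's theorem, this weighted grading coincides, after the shift by the weight $\sum_k r_k$ of the top holomorphic form $dx_0\wedge\dots\wedge dx_n$, with the grading on the Milnor fiber cohomology induced by the Hodge filtration. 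Hence
$$\operatorname{Sp}_f(T)=T^{\sum_k r_k}\,H(T)=\prod_{k=0}^n\frac{T-T^{r_k}}{T^{r_k}-1}.$$
Evaluating at $T=1$ (e.g.\ by L'Hôpital in each factor) recovers $\mu=\prod_k(1-r_k)/r_k$, so
$$\Chi_f(T)=\frac{1}{\mu}\operatorname{Sp}_f(T)=\prod_{k=0}^n\frac{r_k}{1-r_k}\cdot\frac{T-T^{r_k}}{T^{r_k}-1}.$$

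The second step is to pass to the limit $r_k\to 0$ inside the product. Writing $T^{r_k}=1+r_k\log T+O(r_k^2)$ one has $T^{r_k}-1\sim r_k\log T$ and $T-T^{r_k}\to T-1$, so each factor tends to $\tfrac{T-1}{\log T}$. Multiplying the $n+1$ factors yields $\bigl(\tfrac{T-1}{\log T}\bigr)^{n+1}$, which by Definition~\ref{def:N} is exactly $\Fourier{N_{n+1}}(\tau)$.

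The argument is essentially formal once Steenbrink's formula is available: the product structure of $\Chi_f(T)$ decouples the problem into $n+1$ independent one-variable limits, none of which is hard. The only point requiring a small remark is that the limit must be taken along sequences for which $f$ continues to have an isolated singularity at the origin—for instance along the Brieskorn--Pham polynomials $x_0^{1/r_0}+\dots+x_n^{1/r_n}$ with $1/r_k\in\Z_{\geq 2}$—but this does not affect the final computation.
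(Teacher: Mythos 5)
Your proof is correct, and the paper itself gives no argument for this proposition---it is simply quoted from K.~Saito's paper, where the proof is exactly the computation you carry out: Steenbrink's product formula $\Chi_f(T)=\frac{1}{\mu}\prod_{k=0}^n\frac{T-T^{r_k}}{T^{r_k}-1}$ with $\mu=\prod_{k}\frac{1-r_k}{r_k}$, followed by the factorwise limit $T^{r_k}-1\sim r_k\log T$. So your proposal matches the intended (cited) argument; the only point worth keeping is the remark you already make, that the limit is taken along an actual sequence of weight systems realized by polynomials with isolated singularity.
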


\begin{proposition}[{\cite[(3.9)]{Ksaito1983zeroes}}]\label{prop:limitSaito}
    Let $f\in\C\{x,y\}$ be an irreducible plane curve with Puiseux pairs $(n_1,l_1),\dots,(n_g,l_g)$. Then, taking a sequence of such functions with the limit $n_g\rightarrow +\infty$ (keeping all other $n_j$ and $l_j$ fixed), one has
    \begin{equation*}
        \lim_{n_g\rightarrow +\infty} \Chi_f(T)
        = \left(\frac{T-1}{\log{T}}\right)^2
        = \Fourier{N_2(s)}(\tau).
    \end{equation*}
\end{proposition}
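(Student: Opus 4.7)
The proof rests on two ingredients: an explicit enumeration of the spectral exponents in terms of the Puiseux pairs, and a Riemann-sum analysis of the limit $n_g\to+\infty$. First, I would derive an enumeration of the $\alpha_i$ from Steenbrink's formula applied to the minimal embedded resolution of $f$ (which is combinatorially determined by the Puiseux data via the Enriques diagram), or equivalently from the Poincar\'e series of the value semigroup $\Gamma=\langle\bar\beta_0,\bar\beta_1,\ldots,\bar\beta_g\rangle$. The upshot is that the $\alpha_i$ can be given as values of a linear form evaluated at lattice points lying in a bounded polytope whose edge lengths are controlled by the semigroup generators $\bar\beta_k$.

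Next, as $n_g\to+\infty$ with the remaining Puiseux invariants fixed, the generators $\bar\beta_k$ depend polynomially on $n_g$, so both the Milnor number $\mu$ and the number of enumerating lattice points grow polynomially in $n_g$, while the mesh of the rescaled lattice tends to $0$. Writing $\Chi_f(T)=\mu^{-1}\sum_i T^{\alpha_i}$, one can then view this expression as a Riemann sum for an integral of the form $\int T^s\rho(s)\dd{s}$ on $[0,2)$, and the task is to identify the limiting density $\rho$.

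Finally, I would compute $\rho$ directly from the polytopal description. The key structural feature is that for $g\ge 2$ the asymptotic spectrum admits a ``two-parameter'' description: two of the relevant indices scale proportionally to $n_g$, their rescaled values range densely in the square $[0,1)^2$ (after a linear change of coordinates), and the spectral value is their sum. By Fubini this identifies $\rho=\I{[0,1)}\ast\I{[0,1)}=N_2$, whence
\[
\lim_{n_g\to+\infty}\Chi_f(T)=\int_0^2 T^s N_2(s)\dd{s}=\left(\frac{T-1}{\log T}\right)^{\!2}=\Fourier{N_2(s)}(\tau),
\]
as required.

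The chief obstacle is the first step: producing a polytopal description of the spectrum compatible with the limit, and in particular separating the indices that scale with $n_g$ from those that remain bounded (whose contribution vanishes after normalisation by $\mu$). Once one has such a description it must also be verified that the two scaling indices indeed sweep out $[0,1)^2$ in the limit; only then is the triangular density $N_2$ forced, rather than some other piecewise-polynomial convolution of indicator functions. The specific case $g=1$ is quasi-homogeneous and is not covered by this limit (it is governed instead by Saito's earlier Proposition for quasi-homogeneous polynomials, where both weights must tend to $0$).
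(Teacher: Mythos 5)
The paper does not prove this proposition directly --- it is quoted from K.~Saito --- but it does prove a strict generalization (the corollary following Theorem \ref{theor:characterization}, allowing any $n_k\to+\infty$) by a route quite different from yours: it first derives a closed formula for the cumulative difference $\phi_f(r)$ in terms of fractional parts of $e_jr$ (Theorems \ref{theor:count} and \ref{theor:phi}), then characterizes $\lim D_f=N_2$ as equivalent to $\beta_g/\mu\to 0$ (Theorem \ref{theor:characterization}), and finally checks $\beta_g=O(n_g)$, $\mu=O(n_g^2)$. Your Riemann-sum/equidistribution strategy is a legitimate alternative and is in fact closer to K.~Saito's original argument; what the paper's detour through $\phi_f$ buys is an exact, non-asymptotic formula that also yields the converse direction and the dominating-value intervals, whereas your approach, if completed, would give only the forward implication for this particular limit.

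That said, as written your proposal is a plan rather than a proof: the step you yourself flag as ``the chief obstacle'' --- an explicit polytopal enumeration of the spectrum --- is precisely the content of M.~Saito's formula (Theorem \ref{cor:exponents} in the paper), and without writing it down you cannot verify the two claims your argument hinges on. With that formula in hand the verification goes through: the $j=g$ stratum contributes $(n_g-1)(w_g-1)/2$ exponents of the form $\tfrac{b}{n_g}+\tfrac{a}{w_g}$, which is $\mu/2-O(n_g)$ of the $\mu/2$ exponents in $(0,1)$, so the strata with $j<g$ (whose indices $c<e_j=O(n_g)$ are the ``bounded'' directions) indeed vanish after normalization. But note two corrections to your sketch. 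First, for the exponents in $(0,1)$ the rescaled pair $\bigl(\tfrac{b}{n_g},\tfrac{a}{w_g}\bigr)$ equidistributes in the \emph{triangle} $\{x+y<1\}$, not in $[0,1)^2$; the square, and hence the convolution $\I{[0,1)}\ast\I{[0,1)}$, only appears after invoking the symmetry $\alpha\mapsto 2-\alpha$ to handle $(1,2)$, so that step must be made explicit. Second, the case $g=1$ needs no separate treatment: there both $b/n_1$ and $a/(n_1+l_1)$ already equidistribute as $n_1\to+\infty$, so your two-parameter analysis applies verbatim (your fallback to the quasi-homogeneous proposition is also valid, since both weights $1/n_1$ and $1/(n_1+l_1)$ do tend to $0$ in this limit, but it is unnecessary).
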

\noindent
The Puiseux pairs are defined in Section \ref{section:preliminaries:planeCurves}. 

\vskip 2mm 

More recently, Almir\'on and Schulze  gave another example for which the distribution of Hodge spectral exponents also converges to the continuous distribution $N_{n+1}(s)$:

\begin{proposition}[\cite{almiron2022limit}]
    Consider a fixed Newton diagram $\Gamma$. Let $f_{\omega}\in\C\{x_0,\dots,x_n\}$ be a Newton non-degenerate function with Newton diagram $\omega\Gamma$ (the rescaling of $\Gamma$ by a factor $\omega\in\Q_{>0}$). Then, taking a sequence of such functions with limit $\omega\rightarrow +\infty$, one has
    \begin{equation*}
        \lim_{\omega\rightarrow +\infty} \Chi_{f_{\omega}}(T)
        = \left(\frac{T-1}{\log{T}}\right)^{n+1}
        = \Fourier{N_{n+1}(s)}(\tau).
    \end{equation*}
\end{proposition}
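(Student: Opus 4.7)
The plan is to exploit the combinatorial formula for the Hodge spectrum of a Newton non-degenerate hypersurface singularity (due to Steenbrink and made fully explicit by M.~Saito, Varchenko, and Danilov--Khovanskii) which expresses the spectral exponents in terms of the values of the Newton filtration $\nu_{\Gamma}$ on certain lattice points. Recall that $\nu_{\Gamma}\colon \R_{\geqslant 0}^{n+1}\to\R_{\geqslant 0}$ is the piecewise linear function equal to $1$ on $\Gamma$ and linear on each cone over a compact facet of~$\Gamma$. Under the rescaling $\Gamma\mapsto\omega\Gamma$ one has $\nu_{\omega\Gamma}(a)=\omega^{-1}\nu_{\Gamma}(a)$, so the spectral exponents of $f_\omega$ rescale accordingly.

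The key steps are as follows. First, apply the combinatorial formula to write
\begin{equation*}
\operatorname{Sp}_{f_\omega}(T) = \sum_{\sigma} \sum_{a \in P_\sigma^\omega \cap \Z_{>0}^{n+1}} T^{\nu_{\omega\Gamma}(a)},
\end{equation*}
where $\sigma$ ranges over the compact facets of $\omega\Gamma$ and $P_\sigma^\omega$ is a half-open fundamental parallelepiped attached to~$\sigma$ (after a simplicial subdivision if $\sigma$ is not itself a simplex). Second, for each simplex facet with vertices $v_0,\dots,v_n$ of the \emph{unrescaled} $\Gamma$, parametrize points of $P_\sigma^\omega$ as $a=\omega\sum_{i=0}^n t_i v_i$ with $t_i\in(0,1]$; since $\nu_{\Gamma}(v_i)=1$, one has $\nu_{\omega\Gamma}(a)=\sum_{i=0}^n t_i$. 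The inner sum becomes a Riemann sum on a lattice of mesh $1/\omega$ inside the half-open unit cube, so that
\begin{equation*}
\omega^{-(n+1)}\!\!\sum_{a \in P_\sigma^\omega \cap \Z_{>0}^{n+1}} \!\! T^{\nu_{\omega\Gamma}(a)} \xrightarrow[\omega\to\infty]{} \lvert\det(v_0,\dots,v_n)\rvert\int_{(0,1]^{n+1}}\!\! T^{t_0+\cdots+t_n}\,\dd{t_0}\cdots\dd{t_n} = \lvert\det(v_0,\dots,v_n)\rvert\left(\frac{T-1}{\log T}\right)^{n+1}.
\end{equation*}

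Third, sum over all facets and invoke Kouchnirenko's formula to identify the leading behaviour of the Milnor number, $\mu(f_\omega)=\omega^{n+1}\cdot(n+1)!\operatorname{vol}(\Delta_\Gamma)+O(\omega^n)=\omega^{n+1}\sum_\sigma\lvert\det(v_0^\sigma,\dots,v_n^\sigma)\rvert+O(\omega^n)$, where $\Delta_\Gamma$ is the region between the coordinate hyperplanes and $\Gamma$. Dividing the two asymptotic expressions cancels the combinatorial sum of determinants exactly and yields
\begin{equation*}
\Chi_{f_\omega}(T) = \frac{\operatorname{Sp}_{f_\omega}(T)}{\mu(f_\omega)} \xrightarrow[\omega\to\infty]{} \left(\frac{T-1}{\log T}\right)^{n+1},
\end{equation*}
which is the Fourier transform of $N_{n+1}(s)$ by Definition~\ref{def:N}.

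The main technical difficulty is the careful handling of the combinatorial spectrum formula: for a non-simplex compact facet one must replace $P_\sigma^\omega$ by a union of half-open parallelepipeds coming from a simplicial subdivision, and verify that each piece yields the same Riemann-sum limit (the total being independent of the subdivision since $\nu_\Gamma$ is intrinsically defined). There is also a subtlety for non-convenient $\Gamma$ (not meeting every coordinate axis), where one should argue that contributions from unbounded directions are of lower order in $\omega$, or else reduce to the convenient case. Finally, the $O(\omega^n)$ error terms arising both in $\operatorname{Sp}_{f_\omega}(T)$ (from lattice points on the boundary of $P_\sigma^\omega$) and in Kouchnirenko's formula for $\mu(f_\omega)$ become negligible after dividing by $\mu(f_\omega)\sim\omega^{n+1}$, which justifies the Riemann-sum passage to the limit.
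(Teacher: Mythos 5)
The paper does not prove this proposition: it is quoted directly from Almir\'on--Schulze \cite{almiron2022limit}, so there is no internal proof to compare against. Your outline is in essence a reconstruction of the argument of that reference: the spectrum of a Newton non-degenerate singularity is governed by the Newton filtration, the filtration values at lattice points in the (simplicially subdivided) fundamental parallelepipeds over the compact facets equidistribute under the rescaling $\Gamma\mapsto\omega\Gamma$, and Kouchnirenko's formula supplies the matching normalization $\mu(f_\omega)\sim\omega^{n+1}(n+1)!\operatorname{vol}(\Delta_\Gamma)$; this is sound. The one point you should state more carefully is your first display: the identity $\operatorname{Sp}_{f_\omega}(T)=\sum_\sigma\sum_{a\in P_\sigma^\omega\cap\Z_{>0}^{n+1}}T^{\nu_{\omega\Gamma}(a)}$ is not exact, since the Steenbrink--Varchenko--Saito formula carries inclusion--exclusion corrections from lower-dimensional compact faces (already for $x^N+y^N$ the half-open parallelepiped contains $N^2$ lattice points while $\mu=(N-1)^2$), so it should be asserted only up to $O(\omega^n)$ terms --- which is precisely the error you absorb in your final paragraph. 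With that caveat made explicit, the proof goes through.
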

\noindent

\vskip 2mm

We point out that in all of the above examples, the log-canonical threshold of the sequence of hypersurfaces  has limit 0. 
Thus one is tempted to consider this condition in the definition of limit of Question \ref{Q1}. 
In Section \ref{LCT} we will prove that this is in fact a necessary condition but it is not sufficient.

\vskip 2mm

Regarding Question \ref{Q2} on the set of dominating values, Tomari proved the following result which, in terms of dominating values, states the following:
\begin{theorem}[\cite{tomari1993inequality}]\label{theor:1/2}
    Let $f\in\C\{x,y\}$ be a plane curve and  $0< \alpha_1 \leqslant \dots \leqslant \alpha_\mu < 2$ its corresponding Hodge spectral numbers. Then $\frac{1}{2}$ is a dominating value, i.e.,    \begin{equation*}
        \#\left\{\alpha_i\leqslant \frac{1}{2}\right\} < \frac{\mu}{8}.
    \end{equation*}
\end{theorem}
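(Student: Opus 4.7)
The plan is to reduce the claim to Tomari's original inequality $8 p_g < \mu$ for isolated two-dimensional hypersurface double points (the actual statement of \cite{tomari1993inequality}), by suspending $f$ with a square. Concretely, I would set
\begin{equation*}
g(x,y,z) := f(x,y) + z^2 \in \C\{x,y,z\}.
\end{equation*}
Since $f$ is an isolated plane curve singularity, its multiplicity at the origin is at least $2$; hence $g$ defines an isolated hypersurface surface singularity of multiplicity exactly $2$, i.e.\ a hypersurface double point in $(\C^3,0)$.

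Next I would invoke the Thom--Sebastiani formula. The Milnor number of $z^2$ equals $1$, so $\mu(g) = \mu(f)\cdot 1 = \mu$. The unique Hodge spectral exponent of $z^2$ is $1/2$, so the spectral exponents of $g$ are $\beta_i = \alpha_i + 1/2$ for $i=1,\dots,\mu$. In particular
\begin{equation*}
\#\{\, i : \beta_i \leqslant 1\,\} = \#\{\, i : \alpha_i \leqslant \tfrac{1}{2}\,\}.
\end{equation*}
By Steenbrink's classical identification of the geometric genus of an isolated hypersurface surface singularity with the number of its Hodge spectral exponents lying in $(0,1]$, the left hand side is precisely $p_g(g)$.

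Finally, applying Tomari's inequality $8 p_g(g) < \mu(g)$ and using the two identifications just established, I would conclude
\begin{equation*}
8 \cdot \#\{\, i : \alpha_i \leqslant \tfrac{1}{2}\,\} \;=\; 8\,p_g(g) \;<\; \mu(g) \;=\; \mu,
\end{equation*}
which is the stated bound.

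The main (and essentially the only) nontrivial ingredient is Tomari's original double-point inequality $8 p_g < \mu$; the rest is bookkeeping via Thom--Sebastiani and the spectrum--geometric genus dictionary. The one sanity check I would want to carry out carefully is that $g$ really qualifies as a hypersurface double point in Tomari's sense, which holds precisely because every isolated plane curve singularity has multiplicity at least $2$; if instead $\operatorname{mult}_0(f) = 1$ the curve would be smooth, the theorem would be vacuous, and the suspension trick would fail to produce a double point.
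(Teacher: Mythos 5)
Your argument is correct, but it is a genuinely different route from the paper's. You reduce the statement to Tomari's actual published inequality $8p_g<\mu$ for two-dimensional hypersurface double points, via the suspension $g=f+z^2$: Thom--Sebastiani (Scherk--Steenbrink/Varchenko) shifts every spectral exponent by the single exponent $\tfrac{1}{2}$ of $z^2$ and preserves $\mu$, and M.~Saito's identification $p_g=\#\{\beta_i\leqslant 1\}$ (quoted in the paper's introduction) turns $\#\{\alpha_i\leqslant\tfrac12\}$ into $p_g(g)$; the multiplicity check that $g$ is a double point is the right sanity check and holds since $f$ is singular. This is in effect the translation by which the cited Theorem is a restatement of Tomari's result, so it proves the claim for \emph{all} plane curves (reducible or not) but treats Tomari's inequality as a black box. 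The paper's own proof, by contrast, is only for \emph{irreducible} $f$ and is independent of Tomari: it starts from M.~Saito's explicit list of spectral exponents of an irreducible plane curve, derives the closed formula for the cumulative difference $\phi_f(r)$ (Theorem \ref{theor:phi}), bounds $2\mu\,\phi_f(r)$ below by explicit quadratic polynomials $p_1(r)$, $p_2(r)$ in the Puiseux data, and verifies $p_i\!\left(\tfrac12\right)>0$. What each buys: yours is short and covers the reducible case; the paper's is self-contained, and because the positivity is established for a whole parabola it yields entire intervals of dominating values around $\tfrac12$ (Theorem \ref{theor:intervals}), not just the single value $\tfrac12$.
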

\noindent
In Section \ref{section:dominating} we will give an alternative proof of this theorem when restricted to the case where $f$ is irreducible. K.~Saito asked whether $\frac{1}{2}$ is a dominating value for any 
$f\in\C\{x_0,\dots,x_n\}$, that is
    \begin{equation*}
        \#\left\{\alpha_i\leqslant \frac{1}{2}\right\} < \frac{\mu}{(n+1)!\, 2^{n+1}}.
    \end{equation*}
    
    \vskip 2mm 

A conjecture posed by Durfee states:
\begin{conjecture}[\cite{durfee1978signature}]
    Let $f\in\C\{x,y,z\}$ be surface with a singularity at the origin. Then
    \begin{equation*}
        p_g < \frac{\mu}{6}.
    \end{equation*}
\end{conjecture}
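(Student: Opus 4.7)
The plan is to recast Durfee's conjecture inside the dominating-value framework of the introduction. By Steenbrink's theorem, for an isolated surface hypersurface singularity $f\in\C\{x,y,z\}$ the geometric genus is encoded by the spectral numbers as $p_g = \#\{\alpha_i \leqslant 1\}$. A direct computation shows that $N_3(s)=s^2/2$ on $[0,1]$, so
\[
\int_0^1 N_3(s)\,\dd{s} \;=\; \frac{1}{6}.
\]
Hence the conjecture $p_g<\mu/6$ is precisely the statement that $\phi_f(1)>0$, i.e.\ that $1$ is a dominating value for every isolated surface singularity. In this form, Durfee's conjecture becomes the direct three-variable analogue of Tomari's theorem (Theorem~\ref{theor:1/2}) and fits squarely inside Question~\ref{Q2}.

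With this translation in hand, the natural first step is to attack suspensions $f(x,y,z)=g(x,y)+z^k$, for which Thom--Sebastiani gives $\operatorname{Sp}_f(T)=\operatorname{Sp}_g(T)\cdot\operatorname{Sp}_{z^k}(T)$. Writing out the spectra one finds $\#\{\alpha_i\leqslant 1\}=\sum_{\ell=1}^{k-1}\#\{\beta_j\leqslant \ell/k\}$, where $(\beta_j)$ are the spectral numbers of $g$ and $\mu_f=(k-1)\mu_g$. The case $k=2$ already recovers Durfee's inequality from Tomari's theorem, and with the sharper constant $1/8$. For general $k$ the problem reduces to uniform lower bounds on the cumulative difference $\phi_g(r)$ of plane curves at values $r\in(0,1)$, a question in the very spirit of the present paper and within reach of the closed formula for $\phi_g$ announced for irreducible plane curves.

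Beyond suspensions I would combine two classical tools. First, Varchenko's $\mu$-constant stability of the spectrum, together with semicontinuity under deformation, allows one to replace $f$ by a Newton non-degenerate representative whenever such a deformation is available; for Newton non-degenerate surfaces, Steenbrink's explicit description of the spectrum via the Newton polyhedron turns $\phi_f(1)>0$ into a lattice-point count beneath a piecewise linear boundary, tractable by elementary geometry of numbers. Second, the log-canonical threshold bound studied in Section~\ref{LCT} gives sharp control on the smallest spectral number and so contributes to bounding $\#\{\alpha_i\leqslant 1\}$ from above.

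The main obstacle is the absence of any Puiseux-like recursion for general surface singularities. The plane-curve case is tractable because the embedded resolution is controlled by one-dimensional combinatorics (Puiseux pairs), whereas the dual graph of the minimal good resolution of a surface singularity is genuinely two-dimensional, and classes such as superisolated or cusp singularities resist every approach I know. I therefore expect the strategy above to settle Durfee's conjecture cleanly for suspensions, Newton non-degenerate surfaces, and weighted homogeneous surfaces, while the general case would likely require substantial new input---most plausibly of a motivic or mixed Hodge module flavour that globalises the lattice-point arguments available in the Newton non-degenerate world.
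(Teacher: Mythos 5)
This statement is Durfee's conjecture, which the paper records precisely as a \emph{conjecture}: it is an open problem, cited from \cite{durfee1978signature}, and the paper offers no proof of it. The only content the paper attaches to it is the reformulation via M.~Saito's identity $p_g=\#\{\alpha_i\leqslant 1\}$, which turns the inequality $p_g<\mu/6$ into the assertion that $1$ is a dominating value for $n=2$ (since $\int_0^1 N_3(s)\dd{s}=1/6$). Your opening translation reproduces exactly this observation and is correct, as is your Thom--Sebastiani computation for suspensions: for $f=g(x,y)+z^2$ one gets $p_g=\#\{\beta_j\leqslant 1/2\}<\mu_g/8=\mu_f/8$ from Tomari's theorem, which is indeed the content of \cite{tomari1993inequality} for double points.

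However, what you have written is a research programme, not a proof, and you say so yourself: the general case is explicitly deferred to ``substantial new input.'' That is the gap, and it is not a small one. Concretely: (i) the reduction to Newton non-degenerate representatives fails because a general isolated surface singularity need not admit a $\mu$-constant deformation to a Newton non-degenerate germ, so Varchenko's stability does not let you assume that structure; (ii) the log-canonical threshold controls only the smallest spectral exponent $\alpha_1$ and gives essentially no upper bound on the full count $\#\{\alpha_i\leqslant 1\}$; and (iii) for general $k$ your suspension reduction requires uniform lower bounds on $\phi_g(\ell/k)$ for plane curves $g$ at all rationals $\ell/k\in(0,1)$, which neither Tomari's theorem nor the intervals of dominating values established in Theorem~\ref{theor:intervals} (which are proved only for \emph{irreducible} plane curves and do not cover all of $(0,1)$) supply. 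So the proposal establishes at best some known special cases and cannot be regarded as a proof of the statement; since the statement is an open conjecture, no complete proof should be expected here, and your attempt should be presented as partial evidence rather than as a demonstration.
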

Here, $p_g$ denotes the {\it geometric genus} defined as
 \begin{equation*}
        p_g = \dim_\C \left( R^{n-1} \pi_*\mathcal{O}_{X} \right)_0
         \hskip 4mm \text{for } n\geqslant 2, \hskip 1cm 
        (\; p_g = \dim_\C \left( \pi_*\mathcal{O}_{X}/\mathcal{O}_{\C^{2}} \right)_0
         \hskip 4mm \text{for }n=1 \;),
\end{equation*}
with $\pi\colon X\rightarrow \C^{n+1}$ being a resolution of the singularity. M.~Saito \cite{Msaito40exponents} proved a relation between this invariant and the Hodge spectral exponents. 
Namely, $p_g = \#\{i \,|\, \alpha_i\leqslant 1\}$,  and thus Durfee's conjecture predicts that $1$ a dominating value when $n=2$. 
K.~Saito asked whether one can generalize this statement to:

\vskip 2mm 

\noindent {\bf Question: }
    Is $1$ a dominating value for all $n\geqslant 2$? That is, is it true that
    \begin{equation*}
        p_g = \#\left\{\alpha_i\leqslant 1\right\} < \frac{\mu}{(n+1)!}
    \end{equation*}
    for any $f\in\C\{x_0,\dots,x_n\}$ ?

\vskip 2mm

The aim of this paper is to study Questions \ref{Q1} and \ref{Q2} for the case of irreducible plane curves. 
More precisely,  we will give a closed formula for $\#\{\alpha_i\leqslant r\}$ in Theorem \ref{theor:count} and $\phi_f(r)$ in Theorem \ref{theor:phi} in terms of 
numerical invariants of the  irreducible plane curve. With these tools in hand, we will give  in Theorem \ref{theor:characterization} a characterisation of those sequences of irreducible plane curves whose 
discrete distribution of Hodge spectral exponents converge to the continuous distribution. This would give a complete answer to Question \ref{Q1} in this case. In Section \ref{LCT} we will show that 
the log-canonical threshold converging to zero is a necessary but not sufficient condition for Question \ref{Q1}. Regarding Question \ref{Q2}, we provide in Theorem \ref{theor:intervals} intervals 
of dominating values  for an irreducible plane curve. These intervals contain the value $\frac{1}{2}$ and thus we reprove Tomari's result in this particular case (see Corollary \ref{tomari}).


\section{Irreducible plane curve singularities}\label{section:preliminaries:planeCurves}

In this section we will briefly present the necessary background on irreducible plane curves that we will use in this paper and we  refer to Casas-Alvero's book  \cite{casas2000singularities} for unexplained terminology. 



\vskip 2mm

Let $f\colon (\C^2,0)\rightarrow (\C,0)$ be a germ of a holomorphic function, or equivalently a convergent power series $f\in\C\{x,y\}$. The equation $f=0$ defines locally a (complex) plane curve around the origin. We will only consider irreducible plane curves $f$, i.e., irreducible elements of the unique factorization domain $\C\{x,y\}$. 

\begin{theorem}[Puiseux]
    Let $f\in\C\{x,y\}$ define an irreducible plane curve that is not tangent to the $y$-axis (i.e., $\pdv{f}{x}(0,0)\neq 0$). Then there is a Puiseux series $s(x) = \sum_{i\geqslant 0} a_i x^{i/m}$ such that $f(x,s(x))=0$. Moreover, all such series are conjugates $\sigma_\varepsilon(s) = \sum_{i\geqslant 0} a_i \varepsilon^i x^{i/m}$ with $\varepsilon^m=1$. The curve can be parameterized by $t\mapsto \left(t^m, \sum_{i\geqslant 0} a_i t^i\right)$.
\end{theorem}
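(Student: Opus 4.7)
The plan is to prove this via a Newton--Puiseux construction followed by a Galois-theoretic argument exploiting irreducibility. First I would use the non-tangency hypothesis to reduce to the case of a Weierstrass polynomial in $y$: the assumption makes $f$ $y$-regular at the origin (after a harmless linear change of coordinates if necessary), so the Weierstrass preparation theorem writes $f = u \cdot P$ with $u$ a unit in $\C\{x,y\}$ and $P(x,y) = y^m + c_1(x) y^{m-1} + \cdots + c_m(x) \in \C\{x\}[y]$ monic in $y$ of degree $m$, with $c_i(0) = 0$. Since $u$ is invertible, $f(x,s) = 0$ if and only if $P(x,s) = 0$, and it suffices to produce the Puiseux roots of $P$.

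Next I would construct one such root via the Newton polygon algorithm iterated step by step. The lowest edge of the Newton polygon of $P$ dictates a candidate leading exponent $\lambda \in \Q_{>0}$ and, via its characteristic equation, a nonzero leading coefficient $c$; substituting $y = c\,x^{\lambda} + y_1$ into $P$ and iterating on the transformed polynomial formally produces a series $s(x) = \sum_{i\geqslant 0} a_i\,x^{i/m}$ whose denominators divide $m$. A majorant-series estimate --- equivalently, after the change of variables $x = t^m$, an appeal to the implicit function theorem once the iteration has separated the distinct roots --- upgrades the formal series to a convergent Puiseux series satisfying $P(x,s(x)) = 0$ and hence $f(x,s(x)) = 0$.

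Finally I would use irreducibility and Galois theory to identify all $m$ roots and derive the parameterization. Irreducibility of $f$ in $\C\{x,y\}$ propagates to irreducibility of $P$ in $\C\{x\}[y]$ --- any proper monic factor of $P$ is non-invertible in $\C\{x,y\}$ since it evaluates to a positive power of $y$ at $x=0$ --- and hence to irreducibility of $P$ in $\C((x))[y]$. The extension $\C((x^{1/m}))/\C((x))$ is Galois with cyclic group of order $m$, realised by the automorphisms $\sigma_\varepsilon\colon x^{1/m}\mapsto \varepsilon\,x^{1/m}$ for $\varepsilon^m = 1$; by transitivity of this group on the roots of the irreducible $P$, the $m$ roots of $P$ are precisely the conjugates $\sigma_\varepsilon(s) = \sum_{i\geqslant 0} a_i\,\varepsilon^i\,x^{i/m}$. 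Setting $t = x^{1/m}$ (so that $x = t^m$ and $y = \sum_i a_i t^i$) then yields the stated parameterization. The main obstacle I anticipate is the convergence step: the Newton--Puiseux iteration is a priori only formal, so one must either derive quantitative coefficient bounds via a majorant argument or invoke Artin approximation to guarantee that the formal root is an analytic germ.
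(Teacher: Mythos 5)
The paper does not prove this statement: it is the classical Newton--Puiseux theorem, quoted as background in Section \ref{section:preliminaries:planeCurves} with a pointer to Casas-Alvero \cite{casas2000singularities}, so there is no in-paper proof to compare against. Your outline is the standard and correct route: Weierstrass preparation to reduce to a monic $P\in\C\{x\}[y]$ of degree $m$, the Newton polygon iteration to produce a formal root with denominators dividing $m$, a majorant (or implicit-function-theorem-after-separation) argument for convergence, and the identification of all roots as the $\sigma_\varepsilon$-conjugates via irreducibility of $P$ over $\C((x))$ and the cyclic Galois extension $\C((x^{1/m}))/\C((x))$. The two points that genuinely need care are exactly the ones you flag or touch on: the convergence upgrade from formal to analytic, and the descent of irreducibility from $\C\{x\}[y]$ to $\C((x))[y]$ (which works for monic polynomials because $\C\{x\}$ is integrally closed in $\C((x))$); note also that transitivity of the cyclic group on the $m$ roots simultaneously shows the conjugates are pairwise distinct, which is what forces $m$ to be minimal as in the paper's subsequent normalization.
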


A Puiseux series of $f$ has the form
\begin{equation*}
    s(x)
    = \sum_{\substack{j\in (e_0)\\0\leqslant j<\beta_1}} a_j x^{j/m}
    + \sum_{\substack{j\in (e_1)\\\beta_1\leqslant j<\beta_2}} a_j x^{j/m}
    + \dots
    + \sum_{\substack{j\in (e_{g-1})\\\beta_{g-1}\leqslant j<\beta_g}} a_j x^{j/m}
    + \sum_{\substack{j\in (e_g)\\\beta_g\leqslant j}} a_j x^{j/m}
\end{equation*}
with
\begin{align*}
    &e_0 = m
    \\&\beta_i = \min\{j \,|\, a_j\neq 0,\ j\notin (e_{i-1})\} \quad (i=1,\dots,g)
    \\&e_i = \gcd(e_{i-1},\beta_i) \quad (i=1,\dots,g)
\end{align*}
where $m$ is chosen as small as possible, such that $e_g = 1$. Since $e_i | e_{i-1}$, we can define $n_i = e_{i-1} / e_i$ and thus
\begin{equations}\label{def:e}
    &e_i = n_g\cdots n_{i+1} \quad (i=0,\dots,g-1) \\
    &e_g = 1\\
    &n_i \geqslant 2 \quad (i=0,\dots,g-1).
\end{equations}
For the interested reader, these numerical invariants have a geometric meaning: $e_0$ is the multiplicity of $f$ at the origin and $e_i\,(i=1,\dots,g)$ is the multiplicity of $f$ at the $i$-th rupture divisor of its minimal embedded resolution, or equivalently the last infinitely near point of the $i$-th cluster of consecutive satellite points. These concepts are explained in \cite{casas2000singularities}.

\vskip 2mm

We present three equivalent sets of complete topological invariants of the singularity of $f$: 
\begin{itemize}
\item the \textit{characteristic exponents} $\left(\frac{\beta_1}{m},\dots,\frac{\beta_g}{m}\right)$,

\vskip 2mm

\item  the \textit{Puiseux pairs} $(n_1,l_1),\dots,(n_g,l_g)$,

\vskip 2.1mm

\item the \textit{semigroup}    $\ang{\bar\beta_0,\dots,\bar\beta_g}$.

\end{itemize}
They determine, and are determined by, the homeomorphism class of $f^{-1}(0) \cap U$ for a small enough neighbourhood $U$ of the origin. 

\vskip 2mm
From the information given by the Puiseux series we get the first set of invariants.

\begin{definition}
    The \textit{characteristic exponents} of an irreducible plane curve $f$ are the rational numbers $\left(\frac{\beta_1}{m},\dots,\frac{\beta_g}{m}\right)$.
\end{definition}


M.~Saito \cite{Msaito2000exponents} used the following notation for the characteristic exponents:
\begin{equation*}
    \frac{\beta_i}{m} = \frac{k_1}{n_1} + \dots + \frac{k_i}{n_1\cdots n_i}\quad (i=1,\dots,g)
\end{equation*}
with $k_1>n_1$. Additionally, they satisfy $n_j\geqslant 2$, $k_j\geqslant 1$, and $\gcd(k_j,n_j)=1$.

For our purposes it will be more convenient to avoid having to work with the condition $k_1 > n_1$. Therefore we
introduce the following change of variables:
\begin{align*}
    &k_1 = n_1 + l_1 \quad \text{with} \quad l_1\geqslant 1,\\
    &k_j = l_j \quad (j=2,\dots,g).
\end{align*}
 With this slightly different notation, the characteristic exponents are
\begin{equation*}
    \frac{\beta_i}{m} = 1 + \frac{l_1}{n_1} + \dots + \frac{l_i}{n_1\cdots n_i}\quad (i=1,\dots,g)
\end{equation*}
with $n_j\geqslant 2$, $l_j\geqslant 1$, $\gcd(l_j,n_j)=1$.

\begin{definition}\label{def:pairs}
The \textit{Puiseux pairs} of an irreducible plane curve $f$ are $(n_1,l_1),\dots,(n_g,l_g)$.
\end{definition}

\begin{remark}
    The name Puiseux pairs appear in various slightly different ways in the literature. 
    We based our definition on the one given by M.~Saito \cite{Msaito2000exponents}, who used this name for the pairs $(k_1,n_1),\dots,(k_g,n_g)$. 
    Casas-Alvero \cite{casas2000singularities} used the similar term characteristic pairs to refer to $(\beta_1,m),\dots,(\beta_g,m)$. 
    
\end{remark}


\begin{definition}
    The semigroup of an irreducible plane curve $f$ is
    \begin{equation*}
        \Gamma_f = \{ o_f(g) \in\Z_{\geqslant 0} \,|\, g\in\C\{x,y\}\setminus (f) \}
    \end{equation*}
    where the vanishing order of $g$ along $f$ is $  o_f(g) = o_t\Bigl(g\bigl(t^m,s(t^m)\bigr)\Bigr)$
    with $o_t$ the order of a power series in $t$, and $s$ a Puiseux series of $f$.
\end{definition}

The semigroup of $f$ is a finitely generated (additive) semigroup, where the number of the minimal generators is one more than the number of characteristic exponents. 
They are denoted by $  \Gamma_f = \ang{\bar\beta_0,\dots,\bar\beta_g}$. The relation with the characteristic exponents and thus the Puiseux pairs is given by the following:

\begin{proposition}
    The minimal generators of the semigroup $\Gamma_f = \ang{\bar\beta_0,\dots,\bar\beta_g}$ satisfy:
    \begin{align*}
        \bar\beta_0 = m, \hskip 5mm
       \bar\beta_1 = \beta_1, &  \hskip 5mm
        \bar\beta_i = n_{i-1} \bar\beta_{i-1} - \beta_{i-1} + \beta_i \quad (i=2,\dots,g),  \hskip 5mm
        e_i = \gcd(e_{i-1},\bar\beta_i). &
    \end{align*}
\end{proposition}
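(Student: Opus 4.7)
The plan is to work directly with the Puiseux parameterization $\gamma(t) = (t^m, s(t^m))$ and the identity $o_f(g) = o_t\bigl(g(t^m, s(t^m))\bigr)$. The first two identities are immediate: $o_f(x) = o_t(t^m) = m$ gives $\bar\beta_0 = m$, and after the standard change of coordinate $y \mapsto y - P(x)$ absorbing the integer-exponent part of $s$, the series $s(t^m)$ starts at $t^{\beta_1}$, so $o_f(y) = \beta_1 = \bar\beta_1$.

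For $i \geqslant 2$ I would realize $\bar\beta_i$ as $o_f(g_i)$, where $g_i \in \C\{x\}[y]$ is the degree-$m/e_{i-1}$ polynomial
\begin{equation*}
g_i(x,y) = \prod_{[\varepsilon]} \bigl(y - \tilde{s}_\varepsilon(x)\bigr),
\end{equation*}
with $\tilde{s}_\varepsilon$ the truncation at exponent $\beta_i/m$ of the conjugate $\sigma_\varepsilon(s)$ and $[\varepsilon]$ ranging over cosets in $\mu_m/\mu_{e_{i-1}}$ (two conjugates share the same truncation precisely when their ratio lies in $\mu_{e_{i-1}}$). Substituting the parameterization,
\begin{equation*}
o_f(g_i) = \sum_{[\varepsilon]} o_t\bigl(s(t^m) - \tilde{s}_\varepsilon(t^m)\bigr),
\end{equation*}
and each summand is controlled by the largest $k$ with $\varepsilon^{e_k} = 1$: the trivial coset (corresponding to $k \geqslant i-1$) contributes $t$-order $\beta_i$, while for $k < i-1$ the conditions $\varepsilon^{e_k} = 1$ and $\varepsilon^{e_{k+1}} \neq 1$ together force $\varepsilon^{\beta_{k+1}} \neq 1$, making the leading term appear at $t^{\beta_{k+1}}$. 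Counting the $(e_k - e_{k+1})/e_{i-1}$ cosets with fixed $k$ yields
\begin{equation*}
o_f(g_i) = \beta_i + \sum_{k=0}^{i-2}\frac{e_k - e_{k+1}}{e_{i-1}}\,\beta_{k+1},
\end{equation*}
which a short telescoping, combined with the inductive closed form for $\bar\beta_{i-1}$, rewrites as $n_{i-1}\bar\beta_{i-1} - \beta_{i-1} + \beta_i$.

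The identity $e_i = \gcd(e_{i-1}, \bar\beta_i)$ then follows from the congruence $\bar\beta_i \equiv \beta_i \pmod{e_{i-1}}$, which is immediate from the recursion since $e_{i-1}$ divides both $\bar\beta_{i-1}$ (by induction) and $\beta_{i-1}$; combining with $e_i = \gcd(e_{i-1}, \beta_i)$ from the definitions gives the claim. Minimality of $\bar\beta_0, \ldots, \bar\beta_g$ as a generating set reduces to $\bar\beta_i \notin \ang{\bar\beta_0,\ldots,\bar\beta_{i-1}}$, which is immediate because every non-negative combination of $\bar\beta_0, \ldots, \bar\beta_{i-1}$ is divisible by $\gcd(\bar\beta_0,\ldots,\bar\beta_{i-1}) = e_{i-1}$, whereas $\gcd(\bar\beta_i, e_{i-1}) = e_i < e_{i-1}$. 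The main technical obstacle is the valuation count for $g_i(t^m, s(t^m))$: keeping track of which conjugates agree through which block requires careful bookkeeping organized by the tower of subgroups $\mu_{e_g} \subset \cdots \subset \mu_{e_0} = \mu_m$, but once this is laid out the remaining steps are routine algebraic manipulations. The complementary fact that $\bar\beta_0, \ldots, \bar\beta_g$ actually generate all of $\Gamma_f$ (rather than merely belong to it) is classical, and I would appeal to Casas-Alvero's book for it.
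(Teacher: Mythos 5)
The paper does not prove this proposition at all: it is stated as classical background, with the reader referred to Casas-Alvero's book at the top of Section 2, so there is no ``paper's route'' to compare against. Your sketch is the standard textbook argument (truncated Puiseux roots / maximal contact curves), and the computations check out: the truncations of the conjugates at level $\beta_i/m$ are indeed indexed by cosets of $\mu_{e_{i-1}}$ in $\mu_m$ because every exponent $j<\beta_i$ with $a_j\neq 0$ is a multiple of $e_{i-1}$; the order count $o_f(g_i)=\beta_i+\sum_{k=0}^{i-2}\frac{e_k-e_{k+1}}{e_{i-1}}\beta_{k+1}$ is correct (the coset total $\sum_k (e_k-e_{k+1})/e_{i-1}+1=m/e_{i-1}$ matches $\deg_y g_i$), and this closed form does satisfy the recursion $\bar\beta_i=n_{i-1}\bar\beta_{i-1}-\beta_{i-1}+\beta_i$ since $n_{i-1}=e_{i-2}/e_{i-1}$. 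The $\gcd$ identity and the non-membership argument via $\gcd(\bar\beta_0,\dots,\bar\beta_{i-1})=e_{i-1}$ are also fine. The one genuine gap you correctly flag yourself is that your computation only exhibits elements $o_f(g_i)\in\Gamma_f$ that are new at each stage; identifying them with \emph{the} minimal generators requires knowing that these values together with $m$ generate all of $\Gamma_f$, which you defer to the classical literature---that is exactly the same appeal the paper itself makes, so the division of labour is consistent with the paper's treatment.
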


\subsection{Some juggling with numerical invariants}

M.~Saito \cite{Msaito2000exponents} considered the following numerical invariants
in order to obtain a formula for the characteristic function of the Hodge spectral exponents of an irreducible plane curve.
\begin{align*}
    & w_1 = n_1 + l_1, \\
    & w_j = n_j n_{j-1} w_{j-1} + l_j \quad (j=2,\dots,g).
\end{align*}
We can actually extend his definition by letting $ n_0 = 1 ,\; w_0 = 1,$
from which $ w_1
    = n_1 + l_1
    = n_1 n_0 w_0 + l_1$
also satisfies the recurrence relation. Thus, we can write
\begin{definition}\label{def:w}
    We define the following numerical invariants:
    \begin{align*}
        & w_0 = 1, \\
        & w_j = n_j n_{j-1} w_{j-1} + l_j \quad (j=1,\dots,g).
    \end{align*}
\end{definition}
This means that we can use the recursive definition also in the case $j=1$, which will simplify things later. Checking the recurrence relations, one finds that
\begin{proposition}
    The numerical invariants $w_j$ can be interpreted in terms of the semigroup of $f$ as follows:
    \begin{equation*}
        w_j = \frac{\bar\beta_j}{e_j} \quad (j=0,\dots,g).
    \end{equation*}
\end{proposition}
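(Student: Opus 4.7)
The plan is to prove the identity $w_j=\bar\beta_j/e_j$ by induction on $j$, using the recursive definitions of both sequences and the closed formula for the characteristic exponents $\beta_i/m$.

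\textbf{Base cases.} For $j=0$ the claim reads $w_0=1=m/m=\bar\beta_0/e_0$, which is immediate. For $j=1$ one has $w_1=n_1n_0w_0+l_1=n_1+l_1$, while $\bar\beta_1=\beta_1=m(1+l_1/n_1)=n_1e_1+l_1e_1=(n_1+l_1)e_1$, so $\bar\beta_1/e_1=n_1+l_1$ as well. (Treating $j=1$ explicitly is convenient but not strictly necessary once $n_0=1$ and $w_0=1$ have been set up, as in Definition \ref{def:w}.)

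\textbf{Inductive step.} Assume the claim for $j-1$. Using $e_{j-1}=n_je_j$ (which follows from $e_i=n_g\cdots n_{i+1}$), the recursion for $w_j$ gives
\[
w_j=n_jn_{j-1}w_{j-1}+l_j=n_jn_{j-1}\,\frac{\bar\beta_{j-1}}{e_{j-1}}+l_j=n_{j-1}\,\frac{\bar\beta_{j-1}}{e_j}+l_j.
\]
On the other hand, dividing the semigroup recursion $\bar\beta_j=n_{j-1}\bar\beta_{j-1}-\beta_{j-1}+\beta_j$ by $e_j$ yields
\[
\frac{\bar\beta_j}{e_j}=n_{j-1}\,\frac{\bar\beta_{j-1}}{e_j}+\frac{\beta_j-\beta_{j-1}}{e_j}.
\]
So the inductive step reduces to the single numerical identity $(\beta_j-\beta_{j-1})/e_j=l_j$.

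\textbf{The key identity.} Starting from $\beta_i/m=1+l_1/n_1+\dots+l_i/(n_1\cdots n_i)$ and multiplying through by $m=n_1\cdots n_g$, one finds $\beta_i=e_0+\sum_{k=1}^{i}l_k e_k$, since $m/(n_1\cdots n_k)=n_{k+1}\cdots n_g=e_k$. Subtracting the formulas for $i=j$ and $i=j-1$ gives $\beta_j-\beta_{j-1}=l_je_j$, which is precisely the required identity.

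The proof is essentially a bookkeeping exercise: all ingredients appear in the preceding propositions, and the only \emph{main point} is to juggle the two different recursions (for $w_j$ and for $\bar\beta_j$) against the explicit expression of $\beta_i$ in terms of the $l_k$ and $e_k$. No genuine obstacle should arise; the place where one must be slightly careful is the index shift $e_{j-1}=n_je_j$ and the separate treatment (or unified treatment via $n_0=w_0=1$) of the initial case $j=1$, where the semigroup recursion does not apply.
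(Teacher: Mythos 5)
Your proof is correct and is exactly the verification the paper leaves implicit (it only says ``checking the recurrence relations, one finds that\dots'' and omits the details): an induction matching the recursion $w_j=n_jn_{j-1}w_{j-1}+l_j$ against $\bar\beta_j=n_{j-1}\bar\beta_{j-1}-\beta_{j-1}+\beta_j$ via $e_{j-1}=n_je_j$ and $\beta_j-\beta_{j-1}=l_je_j$. Your separate handling of $j=0,1$ (where the semigroup recursion does not apply) is the right care to take, and the key identity $\beta_i=e_0+\sum_{k=1}^{i}l_ke_k$ is the same one the paper itself uses later in the proof of Theorem \ref{theor:characterization}.
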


\begin{definition}\label{def:mu}
    We define the following numerical invariants:
    \begin{align*}
        & \mu_0 = 0, \\
        & \mu_j = (n_j-1)(w_j-1)+n_j\mu_{j-1} \quad (j=1,\dots,g). \\
    \end{align*}
\end{definition}

\begin{proposition}[\cite{Ksaito1983zeroes}]
    The Milnor number of $f$ is $\mu = \mu_g$. More generally,  the Milnor number of a curve with Puiseux pairs $(n_1,l_1),\dots,(n_j,l_j)$ is $\mu_j$, for any $j\in\{1,\dots,g\}$.
\end{proposition}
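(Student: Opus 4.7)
The plan is to invoke the classical closed formula for the Milnor number of an irreducible plane curve in terms of its semigroup, and then verify algebraically that this formula satisfies the recurrence of Definition \ref{def:mu}. By Milnor's theorem $\mu = 2\delta$, together with the fact that the semigroup $\Gamma_f = \ang{\bar\beta_0,\dots,\bar\beta_g}$ is symmetric with $2\delta$ equal to its conductor, one has the standard identity
\begin{equation*}
\mu \;=\; \sum_{i=1}^g (n_i-1)\bar\beta_i \;-\; \bar\beta_0 \;+\; 1,
\end{equation*}
which is a well-known closed expression for the Milnor number of an irreducible plane curve (see e.g.\ \cite{casas2000singularities}). Granting this, the statement about $\mu = \mu_g$ will follow once the stronger inductive statement $\mu(f_j) = \mu_j$ is established for every $j$.

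The key auxiliary input is a scaling observation on semigroup generators. Let $f_j$ denote an irreducible plane curve with Puiseux pairs $(n_1,l_1),\dots,(n_j,l_j)$ and let $\bar\beta_i^{(j)}$ be its semigroup generators. From the identity $w_i = \bar\beta_i / e_i$ together with $e_i^{(j)} = n_j n_{j-1}\cdots n_{i+1}$, I would extract three facts: the invariant $w_i$ depends only on $(n_1,l_1),\dots,(n_i,l_i)$ and is therefore intrinsic; $\bar\beta_j^{(j)} = w_j$ since $e_j^{(j)} = 1$; and $\bar\beta_i^{(j)} = n_j\,\bar\beta_i^{(j-1)}$ for $i<j$, because the quotient $e_i^{(j)}/e_i^{(j-1)}$ equals $n_j$.

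Feeding these scaling relations into the closed formula applied to both $f_j$ and $f_{j-1}$, I would compute
\begin{align*}
\mu_j - n_j\mu_{j-1}
&= \Bigl(\sum_{i=1}^j (n_i-1)\bar\beta_i^{(j)} - \bar\beta_0^{(j)} + 1\Bigr) - n_j\Bigl(\sum_{i=1}^{j-1}(n_i-1)\bar\beta_i^{(j-1)} - \bar\beta_0^{(j-1)} + 1\Bigr)\\
&= (n_j-1)\bar\beta_j^{(j)} + 1 - n_j
\;=\; (n_j-1)(w_j-1),
\end{align*}
since every contribution coming from indices $i<j$ cancels by the scaling identities. This matches Definition \ref{def:mu} exactly. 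The base case $\mu_1 = (n_1-1)(n_1+l_1-1)$ is the classical Milnor number $(n_1-1)(k_1-1)$ of the cusp $y^{n_1}-x^{n_1+l_1}$, so an induction on $j$ yields $\mu(f_j) = \mu_j$ for all $j\in\{1,\dots,g\}$, and specializing to $j=g$ gives $\mu = \mu_g$.

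The main obstacle in a fully self-contained proof is the closed formula for the Milnor number in terms of the semigroup; once this classical input is granted, the rest is a telescoping calculation making crucial use of the homogeneity-like scaling $\bar\beta_i^{(j)} = n_j\,\bar\beta_i^{(j-1)}$. An alternative, more geometric approach would establish the recurrence directly by realizing $f_j$ as the total space of a branched $n_j$-cover over $f_{j-1}$ and tracking the contribution of the added Puiseux pair to $\delta$, but this route is noticeably longer than the semigroup bookkeeping sketched above.
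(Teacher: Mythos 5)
Your proposal is correct, but there is nothing in the paper to compare it against: the paper states this proposition with a citation to K.~Saito \cite{Ksaito1983zeroes} and supplies no proof of its own. Your argument is a legitimate self-contained verification modulo the classical inputs you name, namely Milnor's $\mu=2\delta$ and the fact that $2\delta$ equals the conductor $\sum_{i=1}^g(n_i-1)\bar\beta_i-\bar\beta_0+1$ of the symmetric semigroup $\ang{\bar\beta_0,\dots,\bar\beta_g}$; both are standard for irreducible plane curve germs. The scaling facts you use are exactly the content of the paper's surrounding propositions: $w_i=\bar\beta_i/e_i$ with $w_i$ depending only on the first $i$ Puiseux pairs, so that $\bar\beta_j^{(j)}=w_j$ (as $e_j^{(j)}=1$) and $\bar\beta_i^{(j)}=n_j\,\bar\beta_i^{(j-1)}$ for $i<j$, including $i=0$. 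With these, your telescoping computation $\mu(f_j)-n_j\mu(f_{j-1})=(n_j-1)\bar\beta_j^{(j)}+1-n_j=(n_j-1)(w_j-1)$ is correct (I checked it against the example $(2,1),(2,1)$, where the conductor formula gives $6+13-4+1=16=\mu_2$), and the base case $\mu(f_1)=(n_1-1)(n_1+l_1-1)=(n_1-1)(w_1-1)$ matches $\mu_1$ since $\mu_0=0$. One small point of hygiene: in the displayed computation you write $\mu_j-n_j\mu_{j-1}$ where you mean $\mu(f_j)-n_j\mu(f_{j-1})$; the identification with the recursively defined $\mu_j$ is the conclusion of the induction, not its starting point, so the notation should distinguish the two until the final step.
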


The following technical lemmas where we express $w_j$ and $n_g \mu_{g-1}$ in terms of $l_j, n_j$ (and $e_j$) will be useful in Section \ref{Sec3}.
\begin{lemma}\label{lemma:wj}
    The numerical invariant $w_j$ for $j=1,\dots,g$ can be written as
    \begin{equation*}
        w_j
        = \sum_{k=1}^j l_k \frac{e_k^2 n_k}{e_j^2 n_j} + \frac{e_0^2}{e_j^2 n_j},
    \end{equation*}
    and in particular
    \begin{align*}
        w_g
        = \sum_{k=1}^g l_k \frac{e_k^2 n_k}{n_g} + \frac{e_0^2}{n_g}.
    \end{align*}
\end{lemma}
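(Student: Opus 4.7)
The plan is to first clear denominators. Since $n_k e_k = e_{k-1}$, one has $e_k^2 n_k = e_k e_{k-1}$, and likewise $e_j^2 n_j = e_j e_{j-1}$. Thus the claim is equivalent to the cleaner identity
\[ w_j\, e_j e_{j-1} \;=\; \sum_{k=1}^j l_k\, e_k e_{k-1} \;+\; e_0^2, \]
which I would prove by induction on $j$, in lockstep with the recursion $w_j = n_j n_{j-1} w_{j-1} + l_j$ of Definition \ref{def:w}.

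For the base case $j=1$, I would unfold $w_1 = n_1 n_0 w_0 + l_1 = n_1 + l_1$, multiply by $e_1 e_0$, and use $n_1 e_1 = e_0$ to rewrite $n_1 e_1 e_0$ as $e_0^2$, producing exactly $e_0^2 + l_1 e_1 e_0$. For the inductive step, I would multiply the recursion by $e_j e_{j-1}$; the relations $n_j e_j = e_{j-1}$ and $n_{j-1} e_{j-1} = e_{j-2}$ collapse the coefficient $n_j n_{j-1}\, e_j e_{j-1}$ to $e_{j-1} e_{j-2}$, so
\[ w_j\, e_j e_{j-1} \;=\; w_{j-1}\, e_{j-1} e_{j-2} \;+\; l_j\, e_j e_{j-1}, \]
and the induction hypothesis applied to the first term yields the identity for $j$. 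The ``in particular'' statement for $w_g$ is then just the specialization $j=g$ together with $e_g = 1$.

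There is no real obstacle here beyond careful bookkeeping of the two interlocking recursions (the one defining $w_j$ and the multiplicative relation $n_k e_k = e_{k-1}$). The only subtle point is that the induction relies on the extended recursion being valid at $j=1$, which is precisely the reason $n_0$ and $w_0$ were set to $1$ in Definition \ref{def:w}.
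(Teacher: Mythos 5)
Your proof is correct and follows essentially the same route as the paper's: induction on $j$ via the recursion $w_j = n_j n_{j-1} w_{j-1} + l_j$, with the same key cancellation $n_j n_{j-1}\,e_j e_{j-1} = e_{j-1}e_{j-2}$ that the paper performs in fractional form. Clearing denominators using $e_k^2 n_k = e_k e_{k-1}$ is a purely cosmetic (and arguably cleaner) reformulation, not a different argument.
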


\begin{proof}
    We prove it by induction:
    \begin{itemize}
        \item For $j=1$, \begin{align*}
            w_1
            = l_1 \frac{e_1^2 n_1}{e_1^2 n_1} + \frac{e_0^2}{e_1^2 n_1}
            = l_1 + n_1.
        \end{align*}
        \item The induction step: \begin{align*}
            w_j
            &= n_j n_{j-1} w_{j-1} + l_j
            = n_j n_{j-1} \sum_{k=1}^{j-1} l_k \frac{e_k^2 n_k}{e_{j-1}^2 n_{j-1}} + n_j n_{j-1} \frac{e_0^2}{e_{j-1}^2 n_{j-1}} + l_j
            \\&= \sum_{k=1}^{j-1} l_k \frac{e_k^2 n_k}{e_j^2 n_j} + \frac{e_0^2}{e_j^2 n_j} + l_j \frac{e_j^2 n_j}{e_j^2 n_j}
           = \sum_{k=1}^j l_k \frac{e_k^2 n_k}{e_j^2 n_j} + \frac{e_0^2}{e_j^2 n_j}.
        \end{align*}
    \end{itemize}
\end{proof}

\begin{lemma}\label{lemma:ngmug}
    The product of the numerical invariants $n_g \mu_{g-1}$ can be written as
    \begin{align*}
        n_g \mu_{g-1}
        &= \sum_{k=1}^{g-1}  l_k \left(\frac{e_k^2 n_k}{n_g} - \frac{e_k^2 n_k}{e_{k-1}}\right)
            + \frac{e_0^2}{n_g}
            + n_g
            - 2 e_0.
    \end{align*}
\end{lemma}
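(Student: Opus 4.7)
The plan is to unroll the recursion for $\mu_{g-1}$, substitute the closed formula for $w_k$ from Lemma \ref{lemma:wj}, and then collect terms by exchanging the order of a double sum (Abel summation). The simplification $e_{k-1} = n_k e_k$ will appear repeatedly.

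\textbf{Step 1: Unroll the recursion.} An easy induction on $j$ using Definition \ref{def:mu} gives
\begin{equation*}
    \mu_j = \sum_{k=1}^{j} (n_k-1)(w_k-1)\, n_{k+1}\cdots n_j = \sum_{k=1}^{j} (n_k-1)(w_k-1)\,\frac{e_k}{e_j},
\end{equation*}
since $e_k/e_j = n_{k+1}\cdots n_j$ by \eqref{def:e}. Taking $j=g-1$ and multiplying by $n_g$, and using $e_{g-1}=n_g e_g = n_g$,
\begin{equation*}
    n_g\,\mu_{g-1} = \sum_{k=1}^{g-1} (n_k-1)(w_k-1)\,e_k.
\end{equation*}

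\textbf{Step 2: Simplify each summand using Lemma \ref{lemma:wj}.} Writing $S_k := \sum_{m=1}^{k} l_m e_m^2 n_m + e_0^2$, Lemma \ref{lemma:wj} reads $w_k = S_k/(e_k^2 n_k)$. Combining with $n_k-1 = (e_{k-1}-e_k)/e_k$ and $e_k n_k = e_{k-1}$, one checks the identity
\begin{equation*}
    (n_k-1)(w_k-1)\,e_k = \left(\frac{1}{e_k}-\frac{1}{e_{k-1}}\right) S_k - (e_{k-1}-e_k).
\end{equation*}

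\textbf{Step 3: Sum and swap orders.} The telescoping piece is immediate: $\sum_{k=1}^{g-1}(e_{k-1}-e_k) = e_0 - e_{g-1} = e_0 - n_g$. For the other piece, split $S_k = \sum_{m=1}^k l_m e_m^2 n_m + e_0^2$ and exchange the order of summation; since $\sum_{k=m}^{g-1}\bigl(\tfrac{1}{e_k}-\tfrac{1}{e_{k-1}}\bigr) = \tfrac{1}{e_{g-1}} - \tfrac{1}{e_{m-1}} = \tfrac{1}{n_g} - \tfrac{1}{e_{m-1}}$, one obtains
\begin{equation*}
    \sum_{k=1}^{g-1}\left(\frac{1}{e_k}-\frac{1}{e_{k-1}}\right) S_k = e_0^2\left(\frac{1}{n_g}-\frac{1}{e_0}\right) + \sum_{m=1}^{g-1} l_m e_m^2 n_m\left(\frac{1}{n_g}-\frac{1}{e_{m-1}}\right).
\end{equation*}

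\textbf{Step 4: Assemble.} Putting the two pieces together and noting that $e_0^2/e_0 = e_0$ combines with $-(e_0-n_g)$ to give $-2e_0 + n_g$, one recovers
\begin{equation*}
    n_g\,\mu_{g-1} = \sum_{k=1}^{g-1} l_k\left(\frac{e_k^2 n_k}{n_g} - \frac{e_k^2 n_k}{e_{k-1}}\right) + \frac{e_0^2}{n_g} + n_g - 2 e_0,
\end{equation*}
as claimed. The only slightly delicate step is the third one: bookkeeping the index swap so that the inner telescoping collapses to $\tfrac{1}{n_g}-\tfrac{1}{e_{m-1}}$ and the boundary contribution $e_0^2$ recombines correctly with the telescoping residual $-(e_0-n_g)$ to produce exactly the constant $n_g-2e_0$.
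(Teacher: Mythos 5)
Your proposal is correct and follows essentially the same route as the paper: unroll the recursion for $\mu_{g-1}$ to get $n_g\mu_{g-1}=\sum_{k=1}^{g-1}(n_k-1)(w_k-1)e_k$, substitute the closed form of $w_k$ from Lemma \ref{lemma:wj}, swap the order of the double sum, and telescope. The only difference is cosmetic bookkeeping (you package the substitution into a single per-term identity via $S_k$ before summing, whereas the paper splits directly into three separate telescoping sums), and all the individual identities you use check out.
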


\begin{proof}
    Using the definition of $\mu_j$ and the previous lemma:
    \begin{align*}
        n_g \mu_{g-1}
        &= \sum_{j=1}^{g-1} (w_j -1) (n_j -1) e_j
        = \sum_{j=1}^{g-1} \left(\sum_{k=1}^j l_k \frac{e_k^2 n_k}{e_j^2 n_j} + \frac{e_0^2}{e_j^2 n_j} -1\right) (n_j -1) e_j
        \\&= \sum_{j=1}^{g-1} \sum_{k=1}^j l_k \frac{e_k^2 n_k}{e_j^2 n_j} (n_j -1) e_j
            + \sum_{j=1}^{g-1} \frac{e_0^2}{e_j^2 n_j} (n_j -1) e_j
            - \sum_{j=1}^{g-1} (n_j -1) e_j.
    \end{align*}
    We can simplify this expression as follows:
    \begin{enumerate}[label={\arabic*)}]
        \item The first term:
            \begin{align*}
                \sum_{j=1}^{g-1} \sum_{k=1}^j l_k \frac{e_k^2 n_k}{e_j^2 n_j} (n_j -1) e_j
                &= \sum_{k=1}^{g-1} \sum_{j=k}^{g-1} l_k \frac{e_k^2 n_k}{e_j^2 n_j} (n_j -1) e_j
                = \sum_{k=1}^{g-1}  l_k e_k^2 n_k \sum_{j=k}^{g-1} \frac{1}{e_j n_j} (n_j -1)
                \\&= \sum_{k=1}^{g-1}  l_k e_k^2 n_k \sum_{j=k}^{g-1} \left(\frac{1}{e_j} - \frac{1}{e_{j-1}}\right)
                = \sum_{k=1}^{g-1}  l_k e_k^2 n_k \left(\frac{1}{e_{g-1}} - \frac{1}{e_{k-1}}\right)
                \\&= \sum_{k=1}^{g-1}  l_k \left(\frac{e_k^2 n_k}{n_g} - \frac{e_k^2 n_k}{e_{k-1}}\right).
            \end{align*}
        \item The second term:
            \begin{align*}
                \sum_{j=1}^{g-1} \frac{e_0^2}{e_j^2 n_j} (n_j -1) e_j
                = \sum_{j=1}^{g-1} e_0^2 \left(\frac{1}{e_j} - \frac{1}{e_{j-1}}\right)
                = e_0^2 \left(\frac{1}{e_{g-1}} - \frac{1}{e_0}\right)
                =  \frac{e_0^2}{n_g} - e_0.
            \end{align*}
        \item The third term:
            \begin{align*}
                - \sum_{j=1}^{g-1} (n_j -1) e_j
                = \sum_{j=1}^{g-1} e_j - e_{j-1}
                = e_{g-1} - e_0
                = n_g - e_0.
            \end{align*}
    \end{enumerate}
    
    Putting it together:
    \begin{align*}
        n_g \mu_{g-1}
        &= \sum_{k=1}^{g-1}  l_k \left(\frac{e_k^2 n_k}{n_g} - \frac{e_k^2 n_k}{e_{k-1}}\right)
            + \frac{e_0^2}{n_g}
            + n_g
            - 2 e_0.
    \end{align*}
\end{proof}

\begin{lemma}\label{lemma:mu}
    The Milnor number of an irreducible plane curve with Puiseux pairs $(n_1,l_1),\dots,(n_g,l_g)$ is
    \begin{equation*}
        \mu
        = \sum_{j=1}^g l_j e_j (e_{j-1}-1) + (e_0-1)^2.
    \end{equation*}
\end{lemma}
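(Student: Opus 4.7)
The plan is to compute $\mu=\mu_g$ directly from the recurrence in Definition \ref{def:mu} by substituting the closed-form expressions provided by the two preceding lemmas. Starting from
$$\mu_g = (n_g-1)(w_g-1) + n_g\,\mu_{g-1},$$
I would plug in $w_g = \sum_{k=1}^g l_k\,\frac{e_k^2 n_k}{n_g} + \frac{e_0^2}{n_g}$ from Lemma \ref{lemma:wj} and the expression for $n_g\mu_{g-1}$ from Lemma \ref{lemma:ngmug}, then collect terms.

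The first key observation is arithmetic: from the definition $n_k = e_{k-1}/e_k$, one has $e_k^2 n_k = e_k e_{k-1}$ and $e_k^2 n_k / e_{k-1} = e_k$. Using this, the expression for $n_g\mu_{g-1}$ becomes $\sum_{k=1}^{g-1} l_k\left(\tfrac{e_k e_{k-1}}{n_g} - e_k\right) + \tfrac{e_0^2}{n_g} + n_g - 2e_0$, and the product $(n_g-1)(w_g-1)$ expands to $\sum_{k=1}^g l_k e_k e_{k-1} + e_0^2 - n_g - \sum_{k=1}^g l_k\,\tfrac{e_k e_{k-1}}{n_g} - \tfrac{e_0^2}{n_g} + 1$.

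The second key observation is a bookkeeping cancellation: upon adding these two quantities, the pairs $\pm \tfrac{e_0^2}{n_g}$ and $\mp n_g$ drop out; the constants $e_0^2 - 2e_0 + 1$ assemble into $(e_0-1)^2$; and the two sums $\sum_{k=1}^g l_k \tfrac{e_k e_{k-1}}{n_g}$ and $\sum_{k=1}^{g-1} l_k \tfrac{e_k e_{k-1}}{n_g}$ differ exactly by $l_g \tfrac{e_g e_{g-1}}{n_g} = l_g$ (using $e_g=1$ and $e_{g-1}=n_g$). What remains is
$$\mu_g = (e_0-1)^2 + \sum_{k=1}^g l_k e_k e_{k-1} - l_g - \sum_{k=1}^{g-1} l_k e_k,$$
and absorbing $l_g = l_g e_g$ into the last sum merges the two sums into $\sum_{k=1}^g l_k e_k (e_{k-1}-1)$, yielding the claimed formula.

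There is no real conceptual obstacle here, since Lemmas \ref{lemma:wj} and \ref{lemma:ngmug} have done the heavy work of unrolling the recurrences. The only risk is algebraic bookkeeping: keeping careful track of which sums run up to $g$ versus $g-1$, and applying the identity $e_{k-1}=n_k e_k$ consistently. An alternative would be an induction on $g$ (with the $g=1$ base case giving $(n_1-1)(n_1+l_1-1)=l_1 e_1(e_0-1)+(e_0-1)^2$), but since Lemma \ref{lemma:ngmug} already encodes the effect of the recurrence, the direct substitution above is the most efficient route.
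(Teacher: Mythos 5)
Your proof is correct, and it takes a genuinely different (and shorter) route than the paper's. The paper first unrolls the recurrence completely, writing $\mu_g=\sum_{j=1}^g (w_j-1)(n_j-1)e_j$, substitutes the closed form of each $w_j$ from Lemma \ref{lemma:wj}, and then evaluates three telescoping sums $\sum_j\bigl(\tfrac{1}{e_j}-\tfrac{1}{e_{j-1}}\bigr)$ --- in effect repeating, with upper limit $g$ instead of $g-1$, the same computation it carries out in the proof of Lemma \ref{lemma:ngmug}. You instead apply only one step of the recurrence, $\mu_g=(n_g-1)(w_g-1)+n_g\mu_{g-1}$, and reuse Lemma \ref{lemma:ngmug} as a black box, so that all the telescoping has already been done and only the cancellations $\pm\tfrac{e_0^2}{n_g}$, $\mp n_g$, the identification $e_k^2 n_k=e_ke_{k-1}$, and the absorption of $l_g=l_ge_g$ remain; I checked these and they are right. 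What the paper's version buys is independence from Lemma \ref{lemma:ngmug} (only Lemma \ref{lemma:wj} is needed); what yours buys is economy, at the cost of inheriting any dependence of Lemma \ref{lemma:ngmug} and of being slightly less symmetric in the indices $1\leqslant j\leqslant g$. Either way the result and the supporting identities are the same.
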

\noindent


\begin{proof}
    From Lemma \ref{lemma:wj} we have:
    \begin{equation*}
        w_j
        = \sum_{k=1}^j l_k \frac{e_k^2 n_k}{e_j^2 n_j} + \frac{e_0^2}{e_j^2 n_j}.
    \end{equation*}
    Then, the Milnor number can be written as
    \begin{align*}
        \mu
        &= \mu_g
        = \sum_{j=1}^g (w_j -1) (n_j -1) e_j
        \\&= \sum_{j=1}^g \sum_{k=1}^j l_k \frac{e_k^2 n_k}{e_j^2 n_j} (n_j -1) e_j
            + \sum_{j=1}^g \frac{e_0^2}{e_j^2 n_j} (n_j -1) e_j
            - \sum_{j=1}^g (n_j -1) e_j.
    \end{align*}
    We can simplify this expression as follows.
    \begin{enumerate}[label={\arabic*)}]
        \item The first term:
            \begin{align*}
                \sum_{j=1}^g \sum_{k=1}^j l_k \frac{e_k^2 n_k}{e_j^2 n_j} (n_j -1) e_j
                &= \sum_{k=1}^g \sum_{j=k}^g l_k \frac{e_k^2 n_k}{e_j^2 n_j} (n_j -1) e_j
                = \sum_{k=1}^g  l_k e_k^2 n_k \sum_{j=k}^g \frac{1}{e_j n_j} (n_j -1)
                \\&= \sum_{k=1}^g  l_k e_k^2 n_k \sum_{j=k}^g \left(\frac{1}{e_j} - \frac{1}{e_{j-1}}\right)
                = \sum_{k=1}^g  l_k e_k^2 n_k \left(\frac{1}{e_g} - \frac{1}{e_{k-1}}\right)
                \\&= \sum_{k=1}^g  l_k e_k e_{k-1} \left(1 - \frac{1}{e_{k-1}}\right)
                = \sum_{k=1}^g  l_k e_k (e_{k-1} - 1).
            \end{align*}
        \item The second term:
            \begin{align*}
                \sum_{j=1}^g \frac{e_0^2}{e_j^2 n_j} (n_j -1) e_j
                = e_0^2 \sum_{j=1}^g \left(\frac{1}{e_j} - \frac{1}{e_{j-1}}\right)
                = e_0^2 \left(\frac{1}{e_g} - \frac{1}{e_0}\right)
                =  e_0^2 - e_0.
            \end{align*}
        \item The third term:
            \begin{align*}
                - \sum_{j=1}^g (n_j -1) e_j
                = \sum_{j=1}^g e_j - e_{j-1}
                = e_g - e_0
                = 1 - e_0.
            \end{align*}
    \end{enumerate}
    Putting it together:
    \begin{equation*}
        \mu
        = \sum_{k=1}^g  l_k e_k (e_{k-1} - 1) + e_0^2 - e_0 + 1 - e_0 
        = \sum_{j=1}^g  l_j e_j (e_{j-1} - 1) + (e_0 - 1)^2.
    \end{equation*}
\end{proof}

\subsection{Hodge spectral exponents of irreducible plane curves}

Th\`an and Steenbrink \cite{thanh1989spectre} already described the Hodge spectrum of any plane curve in terms its topological invariants, but in this work we will use a closed formula
given by M.~Saito: 


\begin{theorem}[{\cite[Th. 1.5]{Msaito2000exponents}}]\label{cor:exponents}
    The Hodge spectral exponents in the interval $(0,1)$ are:
   
   \begin{equation*}
  \left\{ \frac{1}{e_j} \left(\frac{b}{n_j}+\frac{a}{w_j}\right) + \frac{c}{e_j} \ \middle|\  \begin{array}{l}
        0 < a < w_j \\
        0 < b < n_j \\
        0 \leqslant c < n_{j+1}\cdots n_g \\
        1 \leqslant j \leqslant g
    \end{array}, \frac{b}{n_j}+\frac{a}{w_j} < 1\right\}.
\end{equation*}
 
\end{theorem}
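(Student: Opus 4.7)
The plan is to prove the formula by induction on the number $g$ of Puiseux pairs, following M.~Saito's original strategy in \cite{Msaito2000exponents}.

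For the base case $g=1$, we have $e_1 = 1$ and the parameter $c$ ranges over the empty product $n_2\cdots n_1$ (read as $1$), forcing $c = 0$. By Varchenko's $\mu$-constant theorem the Hodge spectral exponents are topological invariants, so we may deform $f$ to its Brieskorn--Pham normal form $y^{n_1} - x^{w_1}$ with $\gcd(n_1, w_1) = 1$. Steenbrink's classical formula for weighted homogeneous singularities then gives the spectral exponents as $\{\,a/w_1 + b/n_1 : 1 \leqslant a \leqslant w_1 - 1,\; 1 \leqslant b \leqslant n_1 - 1\,\}$, and restricting to the interval $(0,1)$ yields exactly the set stated in the theorem for $j=1$.

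For the inductive step, assume the formula is known for curves with $g-1$ Puiseux pairs. The idea I would use is that adding a $g$-th Puiseux pair corresponds geometrically to a cyclic $n_g$-fold covering in a Jung-type (partial) resolution of $f$. Translated to the spectrum level, after a $\mu$-constant deformation that puts $f$ into a product-compatible normal form, this yields a Thom--Sebastiani-type convolution between the spectrum of an auxiliary germ $\tilde{f}$ with Puiseux pairs $(n_1,l_1),\ldots,(n_{g-1},l_{g-1})$ and the spectrum of a Brieskorn--Pham factor encoding $(n_g,l_g)$. Unfolding this convolution, each exponent of $\tilde{f}$ is divided by $n_g$ and translated by $c/e_{g-1} = c/n_g$ for $c = 0, 1, \ldots, n_g - 1$, while the contribution of the top stratum $j = g$ appears directly (with $c = 0$, since $e_g = 1$). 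Feeding the inductive formula for $\tilde{f}$ into this recipe produces, for each $j<g$, the full range $0 \leqslant c < n_{j+1}\cdots n_g$ demanded by the statement.

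The main obstacle is the combinatorial bookkeeping for the parameter $c$: one has to verify that, after $g-j$ iterations of the inductive shift-and-rescale step, the exponents coming from the $j$-th Puiseux pair acquire $c$-shifts running through a complete set of representatives $\{0, 1, \ldots, n_{j+1}\cdots n_g - 1\}$, and that the strict truncation $b/n_j + a/w_j < 1$ correctly carves out the exponents in $(0,1)$ without producing collisions or spurious exponents at the boundary. A useful consistency check is that, invoking the symmetry $\alpha \leftrightarrow 2-\alpha$ of the full spectrum, the asserted formula accounts for exactly $\mu_g$ exponents in total, matching the count provided by Lemma \ref{lemma:mu}.
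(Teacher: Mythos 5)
The first thing to note is that the paper does not prove this statement at all: it is quoted verbatim as Theorem~1.5 of M.~Saito's paper \cite{Msaito2000exponents} and used as a black box, so there is no in-paper argument to compare yours against. Your base case is fine: for $g=1$ the curve is indeed $\mu$-constant equivalent to $y^{n_1}-x^{w_1}$ with $\gcd(n_1,w_1)=1$, and Steenbrink's formula for quasi-homogeneous singularities gives exactly the $j=1$ stratum. Your bookkeeping for $c$ is also consistent: writing $\tilde e_j=e_j/n_g$ for the truncated curve, the shifts $\tilde c+c'\tilde e_j$ with $0\leqslant \tilde c<\tilde e_j$, $0\leqslant c'<n_g$ do sweep out $\{0,\dots,e_j-1\}$, and the count $\mu_g/2=(n_g-1)(w_g-1)/2+n_g\mu_{g-1}/2$ matches Definition~\ref{def:mu}.

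The genuine gap is the inductive step itself. The rule you need --- that the spectrum of $f$ in $(0,1)$ equals $\bigcup_{c=0}^{n_g-1}\bigl(\tfrac{1}{n_g}\operatorname{Sp}^{<1}_{\tilde f}+\tfrac{c}{n_g}\bigr)$ together with the new top stratum $j=g$ --- is precisely the content of the theorem, and your justification for it does not hold up. There is no ``product-compatible normal form'': a Thom--Sebastiani join $\tilde f(x,y)\oplus h(u,v)$ has Milnor number $\mu(\tilde f)\cdot\mu(h)$, whereas here $\mu_g=(n_g-1)(w_g-1)+n_g\mu_{g-1}$ is additive in the strata, not multiplicative, so no $\mu$-constant deformation of $f$ to a join of the truncated curve with a Brieskorn--Pham factor can exist, and the spectrum is not literally a convolution of the two spectra. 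The cyclic-covering heuristic points in the right direction, but turning it into a proof requires an actual computation of the spectrum from the embedded resolution (as in Th\`anh--Steenbrink \cite{thanh1989spectre}, decomposing the nearby cycles over the rupture divisors, which is where the strata indexed by $j$ and the quantities $w_j=\bar\beta_j/e_j$ really come from) or M.~Saito's own microlocal argument; as written, the key step is asserted rather than proved. You should either carry out that resolution computation or explicitly reduce to the Th\`anh--Steenbrink formula and verify combinatorially that it reorganizes into the stated set.
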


Notice that this formula gives us a set of $\mu/2$ Hodge spectral exponents and thus, by symmetry, it characterizes all the Hodge spectral exponents of $f$.

\begin{remark}
If we use
\begin{align*}
    & 1\leqslant a\leqslant w_j-1 \text{ and } \frac{b}{n_j} + \frac{a}{w_j} < 1
    \\&\iff 1\leqslant a\leqslant w_j-1 \text{ and } a < w_j\left(1 - \frac{b}{n_j}\right)
    \\&\iff 1\leqslant a\leqslant \floor*{w_j\left(1 - \frac{b}{n_j}\right)},
\end{align*}
the set of Hodge spectral exponents in $(0,1)$ can be written as
\begin{align*}\label{eq:setExponents}
    \bigcup_{j=1}^g \bigcup_{c=0}^{e_j-1} \bigcup_{b=1}^{n_j-1} \left\{ \frac{c}{e_j} + \frac{b}{e_j n_j} + \frac{a}{e_j w_j} \ \middle|\  1\leqslant a\leqslant\floor*{w_j\left(1-\frac{b}{n_j}\right)} \right\}.
\end{align*}
\end{remark}
In terms of the distribution,
\begin{align*}
    \mu\, D_f^{<1}(s)
    &= \sum_{j=1}^g \sum_{c=0}^{e_j-1} \sum_{b=1}^{n_j-1} \sum_{a=1}^{\floor*{w_j\left(1-\frac{b}{n_j}\right)}} \delta\left(s - \left( \frac{c}{e_j} + \frac{b}{e_j n_j} + \frac{a}{e_j w_j} \right)\right).
\end{align*}

\begin{corollary}\label{prop:lct}
    The smallest Hodge spectral exponent is the log-canonical threshold
    \begin{equation*}
        \alpha_1= \lct(f)
        = \frac{1}{e_1} \left(\frac{1}{n_1}+\frac{1}{n_1+l_1}\right) = \frac{1}{n_g\cdots n_2} \left(\frac{1}{n_1}+\frac{1}{n_1+l_1}\right) = \frac{1}{e_0} \left(1+\frac{1}{1+\frac{l_1}{n_1}}\right).
    \end{equation*}
    The largest  Hodge spectral exponent in the interval $(0,1)$ is 
\begin{align*}
  \alpha_{\mu/2}=   1 - \frac{1}{e_{g-1} w_g}
    = 1 - \frac{1}{n_g w_g}.
\end{align*}
\end{corollary}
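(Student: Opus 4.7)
My plan is to extract the extrema directly from the description of the Hodge spectral exponents given in Theorem~\ref{cor:exponents} (in the reformulated version of the Remark), namely as the values $\frac{c}{e_j}+\frac{b}{e_j n_j}+\frac{a}{e_j w_j}$ ranging over $1\leqslant j\leqslant g$, $0\leqslant c<e_j$, $0<b<n_j$, $0<a<w_j$, subject to $\frac{b}{n_j}+\frac{a}{w_j}<1$. Identifying $\alpha_1$ with $\lct(f)$ is a general fact about Hodge spectra, so the content to prove is the explicit formula.

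For the smallest exponent, I would fix $j$ and take the obvious minimum $c=0$, $a=b=1$, which is admissible because $n_j\geqslant 2$ and $w_j\geqslant n_j+l_j\geqslant 3$ imply $\frac{1}{n_j}+\frac{1}{w_j}<1$. This yields the per-block minimum $\frac{1}{e_j n_j}+\frac{1}{e_j w_j}=\frac{1}{e_{j-1}}+\frac{1}{e_j w_j}$, where I used $e_j n_j=e_{j-1}$. To see that the global minimum occurs at $j=1$, I would compare: for $j\geqslant 2$ the per-block minimum is at least $\frac{1}{e_{j-1}}\geqslant \frac{1}{e_1}\geqslant \frac{2}{e_0}$ (since $e_0=n_1 e_1\geqslant 2e_1$), while for $j=1$ the value $\frac{1}{e_0}+\frac{1}{e_1(n_1+l_1)}$ is strictly less than $\frac{2}{e_0}$ because $\frac{1}{e_1(n_1+l_1)}=\frac{n_1}{e_0(n_1+l_1)}<\frac{1}{e_0}$. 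The three equivalent forms stated in the corollary then follow from $e_1=n_g\cdots n_2$ and the identity $\frac{1}{n_1}+\frac{1}{n_1+l_1}=1+\frac{1}{1+l_1/n_1}$ after factoring out $\frac{1}{n_1}$ appropriately.

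For the largest exponent in $(0,1)$, I would fix $j$, observe that $c=e_j-1$ is optimal for the first term, and then maximise the linear form $bw_j+an_j$ over integer pairs with $0<a<w_j$, $0<b<n_j$, subject to $bw_j+an_j<n_j w_j$. The crux is the coprimality $\gcd(n_j,w_j)=1$, which I would derive from the recurrence $w_j=n_j n_{j-1}w_{j-1}+l_j$, giving $w_j\equiv l_j\pmod{n_j}$ together with the defining condition $\gcd(l_j,n_j)=1$. By the standard Bézout/Frobenius argument, the maximum admissible value is exactly $n_j w_j-1$, attained at a unique pair $(a,b)$ in the prescribed ranges (the uniqueness and admissibility of $b\in\{1,\dots,n_j-1\}$ being a short check). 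Hence the per-block maximum equals $\frac{e_j-1}{e_j}+\frac{1}{e_j}\bigl(1-\frac{1}{n_j w_j}\bigr)=1-\frac{1}{e_{j-1}w_j}$.

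It remains to show that $e_{j-1}w_j$ is maximised at $j=g$. I would compute the ratio
\[
\frac{e_j w_{j+1}}{e_{j-1} w_j}=\frac{w_{j+1}}{n_j w_j}=n_{j+1}+\frac{l_{j+1}}{n_j w_j}>1,
\]
which establishes strict monotonicity in $j$. Taking $j=g$ gives the global maximum $1-\frac{1}{e_{g-1}w_g}=1-\frac{1}{n_g w_g}$, using $e_{g-1}=n_g$. The main point of potential friction is the coprimality argument guaranteeing the extremal pair exists in the open ranges $0<a<w_j$, $0<b<n_j$; everything else reduces to checking monotonicities in $j$.
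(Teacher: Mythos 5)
Your argument is correct and is exactly the intended derivation: the paper states this as an immediate corollary of Theorem~\ref{cor:exponents} without writing out a proof, and your extraction of the per-block extrema (with the coprimality $\gcd(n_j,w_j)=1$ from $w_j\equiv l_j\pmod{n_j}$ handling the maximum, and the monotonicity comparisons locating the global extrema at $j=1$ and $j=g$) fills in precisely the omitted details. All the individual checks — admissibility of $(a,b,c)=(1,1,0)$, attainability of $bw_j+an_j=n_jw_j-1$ in the open ranges, and the ratio $\frac{e_jw_{j+1}}{e_{j-1}w_j}>1$ — are sound.
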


\section{Cumulative difference function \texorpdfstring{$\phi_f$}{ϕ\_f}} \label{Sec3}



From Definition \ref{def:phi} we have that the cumulative difference function for the case of plane curves is $\phi_f \colon [0,1) \longrightarrow \R$ defined as

\begin{align*}
    \phi_f(r)
    &= \int_0^r N_2(s) - D_f(s) \dd{s}
    = \int_0^r N_2(s) - \frac{1}{\mu} \sum_{i=1}^{\mu} \delta(s-\alpha_i) \dd{s}
     = \frac{1}{2} r^2 - \frac{1}{\mu} \#\{\alpha_i\leqslant r\}
\end{align*}
since we have $N_2(s)=s$ in the interval $[0,1)$.

\vskip 2mm

In this section we will prove the following explicit formulas:

\begin{theorem}\label{theor:count}
    Let $f$ be an irreducible plane curve with Puiseux pairs $(n_1,l_1),\dots,(n_g,l_g)$. Then, for any $r\in[0,1)$, the number of Hodge spectral exponents less or equal to $r$ is given by the following expression:
    \begin{align*}
        \#\{\alpha_i\leqslant r\}
        &= \frac{\mu_g - n_g w_g}{2} r
            + \frac{n_g w_g}{2} r^2
            + \sum_{j=1}^g \frac{n_j -1}{2} \fract{e_j r}
            + \frac{1}{2} \fract{e_0 r} (1 - \fract{e_0 r})
            \\&\quad + \sum_{j=1}^g \frac{l_j}{2n_j} \fract{e_{j-1} r}(1-\fract{e_{j-1} r})
            - \sum_{j=1}^g \sum_{b=1}^{n_j-1} \fract*{w_j\left(\fract{e_j r} - \frac{b}{n_j}\right)}
                \I{\left[\left.\frac{b}{n_j},1\right)\right.}\!\left(\fract{e_j r}\right).
    \end{align*}
\end{theorem}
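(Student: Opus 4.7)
The starting point is M.~Saito's enumeration of the Hodge spectral exponents in $(0,1)$ (Theorem~\ref{cor:exponents}), presenting each exponent in the form $\tfrac{c}{e_j}+\tfrac{b}{e_jn_j}+\tfrac{a}{e_jw_j}$, indexed by $1\leqslant j\leqslant g$, $1\leqslant b\leqslant n_j-1$, $0\leqslant c\leqslant e_j-1$, and $1\leqslant a\leqslant\floor{w_j(1-b/n_j)}$. I would count $\#\{\alpha_i\leqslant r\}=\sum_{j=1}^{g}N_j(r)$ level by level. Writing $q_j:=\floor{e_jr}$ and $\rho_j:=\fract{e_jr}$, the constraint $c+b/n_j+a/w_j\leqslant e_jr$ partitions the sum over $c$ into three cases: for $c\in\{0,\dots,q_j-1\}$ every admissible $a$ works (yielding $\floor{w_j(1-b/n_j)}$ values of $a$ per $c$); for $c=q_j$ one counts $\floor{w_j(\rho_j-b/n_j)}$ values when $\rho_j>b/n_j$ and none otherwise; for $c\geqslant q_j+1$ nothing works.

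Summing the full-block contribution over $b$ collapses by Gauss's classical identity $\sum_{b=1}^{n-1}\floor{wb/n}=\tfrac{(w-1)(n-1)}{2}$ whenever $\gcd(w,n)=1$ (applicable since $w_j\equiv l_j\pmod{n_j}$ and $\gcd(l_j,n_j)=1$), producing $\tfrac{q_j(w_j-1)(n_j-1)}{2}$. For the boundary case $c=q_j$, I write $\floor{x}=x-\fract{x}$, set $B_j:=\floor{n_j\rho_j}$, and evaluate the arithmetic sum $\sum_{b=1}^{B_j}(w_j\rho_j-w_jb/n_j)$. The pivotal identity is $\fract{e_{j-1}r}=n_j\rho_j-B_j$, which follows from $e_{j-1}=n_je_j$. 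Setting $\sigma_j:=\fract{e_{j-1}r}$, the level-$j$ count becomes
\[
N_j(r)=\frac{q_j(w_j-1)(n_j-1)}{2}+\frac{w_jn_j}{2}\rho_j^{2}-\frac{w_j}{2}\rho_j+\frac{w_j}{2n_j}\sigma_j(1-\sigma_j)-\sum_{b=1}^{n_j-1}\fract*{w_j\!\left(\rho_j-\tfrac{b}{n_j}\right)}\I{[b/n_j,1)}\!(\rho_j).
\]

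The final step is to sum $N_j(r)$ over $j$, using $q_j=e_jr-\rho_j$ together with $\mu_g=\sum_j e_j(w_j-1)(n_j-1)$ (Definition~\ref{def:mu}) to extract the linear contribution $\tfrac{\mu_g}{2}r$. The telescoping is driven by the shift $\sigma_{j+1}=\rho_j$ combined with the recurrence $w_{j+1}=n_{j+1}n_jw_j+l_{j+1}$ (Definition~\ref{def:w}), via the algebraic identity
\[
\frac{w_jn_j}{2}\rho_j^{2}+\frac{w_{j+1}}{2n_{j+1}}\rho_j(1-\rho_j)=\frac{l_{j+1}}{2n_{j+1}}\rho_j(1-\rho_j)+\frac{w_jn_j}{2}\rho_j,
\]
which collapses paired $\rho_j^{2}$ contributions. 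The $j=g$ boundary produces $\tfrac{n_gw_g}{2}r^{2}$ (since $e_g=1$ gives $\rho_g=r$), while the $j=1$ boundary together with $w_1=n_1+l_1$ splits into $\tfrac12\fract{e_0r}(1-\fract{e_0r})$ plus the $k=1$ summand of $\sum_k\tfrac{l_k}{2n_k}\fract{e_{k-1}r}(1-\fract{e_{k-1}r})$. The linear-in-$\fract{e_jr}$ coefficients then cancel identically for $j<g$ by direct computation, while the $j=g$ residue combines with $\tfrac{\mu_g}{2}r$ to yield the advertised coefficient $\tfrac{\mu_g-n_gw_g}{2}r$. The main obstacle is this last bookkeeping step: correctly orchestrating the interplay of the floor/fractional-part identities with the $w_j$-recurrence in the telescoping, and verifying that boundary behaviour (e.g.\ when $n_j\rho_j\in\Z$, where the $b=B_j$ summand and the indicator boundary jointly vanish) is consistent on both sides.
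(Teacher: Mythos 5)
Your argument is essentially the paper's own proof: both start from M.~Saito's enumeration, split the sum over $c$ into full blocks ($c<\floor{e_jr}$, handled by the Gauss-type count $\sum_b\floor{w_j(1-b/n_j)}=\tfrac{(n_j-1)(w_j-1)}{2}$) plus the boundary block $c=\floor{e_jr}$, use the identity $\fract{n_j\fract{e_jr}}=\fract{e_{j-1}r}$ to evaluate the arithmetic sum over $b$, arrive at exactly the same level-$j$ expression, and then telescope the $\fract{e_jr}(1-\fract{e_jr})$ terms via the recurrence $w_{j+1}=n_{j+1}n_jw_j+l_{j+1}$ with the boundary contributions at $j=g$ and $j=0$ producing $\tfrac{n_gw_g}{2}r^2$ and $\tfrac12\fract{e_0r}(1-\fract{e_0r})$. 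One slip in your final paragraph: the linear-in-$\fract{e_jr}$ coefficients do not cancel for $j<g$; the direct computation gives $\tfrac{w_jn_j}{2}-\tfrac{w_j}{2}-\tfrac{(w_j-1)(n_j-1)}{2}=\tfrac{n_j-1}{2}$, which is precisely the surviving coefficient $\sum_{j=1}^g\tfrac{n_j-1}{2}\fract{e_jr}$ in the stated formula.
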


\begin{theorem}\label{theor:phi}
    Let $f$ be an irreducible plane curve with Puiseux pairs $(n_1,l_1),\dots,(n_g,l_g)$. Then, for any $r\in[0,1)$, the cumulative difference function between $N_2(s)$ and $D_f(s)$ is given by the following expression:
    \begin{align*}
        \phi_f(r)
        &= \frac{1}{2\mu} \left( \biggl(2e_0 - 1 + \sum_{j=1}^g  l_j e_j\biggr) r(1-r)
            - \sum_{j=1}^g (n_j -1) \fract{e_j r}
            - \fract{e_0 r} (1 - \fract{e_0 r})
            \right.\\&\left.\hspace{15mm} - \sum_{j=1}^g \frac{l_j}{n_j} \fract{e_{j-1} r}(1-\fract{e_{j-1} r})
            + \sum_{j=1}^g \sum_{b=1}^{n_j-1} 2 \fract*{w_j\left(\fract{e_j r} - \frac{b}{n_j}\right)} \I{\left[\frac{b}{n_j},1\right)}\!\left(\fract{e_j r}\right)
            \right).
    \end{align*}
\end{theorem}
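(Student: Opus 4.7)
The plan is to derive Theorem \ref{theor:phi} directly from Theorem \ref{theor:count} via the definition of $\phi_f$, reducing the proof to a single algebraic identity that follows from Lemmas \ref{lemma:wj} and \ref{lemma:mu}.

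First, since $N_2(s) = s$ on $[0,1)$, one has $\phi_f(r) = \tfrac{1}{2}r^2 - \tfrac{1}{\mu}\#\{\alpha_i \leqslant r\}$ for $r\in[0,1)$. I would substitute the closed-form expression from Theorem \ref{theor:count}, use $\mu = \mu_g$, and pull a common factor of $\tfrac{1}{2\mu}$ out in front. After this rescaling, the fractional-part and indicator terms match the target formula of Theorem \ref{theor:phi} line-by-line; in particular, the contribution involving $\fract{w_j(\fract{e_j r} - b/n_j)}$ picks up the coefficient $+2$ inside the bracket because $\tfrac{1}{\mu} = \tfrac{2}{2\mu}$, as claimed in the statement.

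The main computation is then to identify the polynomial part in $r$. A direct simplification gives
$$\tfrac{1}{2}r^2 - \tfrac{1}{2\mu_g}\bigl[(\mu_g - n_g w_g)r + n_g w_g r^2\bigr] = \tfrac{n_g w_g - \mu_g}{2\mu_g}\, r(1-r),$$
so it suffices to establish the identity $n_g w_g - \mu_g = 2e_0 - 1 + \sum_{j=1}^g l_j e_j$. To prove this I would apply Lemma \ref{lemma:wj} together with the relation $e_k n_k = e_{k-1}$ to obtain $n_g w_g = \sum_{k=1}^g l_k e_k e_{k-1} + e_0^2$, and expand Lemma \ref{lemma:mu} as $\mu_g = \sum_{j=1}^g l_j e_j e_{j-1} - \sum_{j=1}^g l_j e_j + (e_0-1)^2$. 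The sums $\sum l_j e_j e_{j-1}$ cancel upon subtraction, and $e_0^2 - (e_0-1)^2 = 2e_0 - 1$ yields the claimed identity.

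There is no substantial obstacle in this approach: the only delicate point is the bookkeeping when pulling $\tfrac{1}{2\mu}$ outside and tracking the signs of the three groups of terms (polynomial in $r$, products of the form $\fract{e_j r}(1-\fract{e_j r})$, and the indicator-weighted double sum). Once the algebraic identity $n_g w_g - \mu_g = 2e_0 - 1 + \sum_j l_j e_j$ is in place, the formula of Theorem \ref{theor:phi} assembles itself directly from the expression in Theorem \ref{theor:count}.
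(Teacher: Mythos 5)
Your proposal is correct and follows the same overall route as the paper: both start from $\phi_f(r)=\tfrac12 r^2-\tfrac1\mu\#\{\alpha_i\leqslant r\}$, substitute Theorem \ref{theor:count}, and reduce everything to identifying the coefficient of $r(1-r)$. The one (minor) divergence is in how that coefficient is computed: the paper rewrites the quadratic part via the recursion $\mu_g=(n_g-1)(w_g-1)+n_g\mu_{g-1}$ and then invokes Lemma \ref{lemma:ngmug} together with Lemma \ref{lemma:wj} to evaluate $n_g+w_g-1-n_g\mu_{g-1}$, whereas you verify the equivalent identity $n_gw_g-\mu_g=2e_0-1+\sum_j l_je_j$ directly from Lemma \ref{lemma:wj} (giving $n_gw_g=\sum_k l_ke_ke_{k-1}+e_0^2$) and Lemma \ref{lemma:mu}, which bypasses Lemma \ref{lemma:ngmug} entirely and is slightly more economical.
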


\subsubsection{Proof of Theorem \ref{theor:count}}


The number of Hodge spectral exponents 
less or equal to $r$
for $r\in [0,1)$ is given by the cumulative distribution, that is
\begin{equation*}
    \#\{\alpha_i\leqslant r\}
    = \int_0^r \mu\, D_f^{<1}(s) \dd{s}.
\end{equation*}
To calculate this integral, we start with the following observation.
    Let $m\in\Z_{>0}$. Then
    \begin{align*}
        \#\{i\in\Z \,|\, 1\leqslant i \leqslant m,\ i\leqslant x \}
        = \floor{x} \I{[0,m)}(x) + m \I{[m,+\infty)}(x),
    \end{align*}
    and since $\floor{x}=m$ in the interval $x\in[m,m+1)$, we also have
    \begin{align*}
        \#\{i\in\Z \,|\, 1\leqslant i \leqslant m,\ i\leqslant x \}
        = \floor{x} \I{[0,\lambda)}(x) + m \I{[\lambda,+\infty)}(x)
    \end{align*}
    for any $\lambda \in[m,m+1]$.
Then we have:
\begin{align*}
    \#\{\alpha_i\leqslant r\}
    &= \int_0^r \mu\, D_f^{<1}(s) \dd{s}
    \\&= \int_0^r \sum_{j=1}^g \sum_{c=0}^{e_j-1} \sum_{b=1}^{n_j-1} \sum_{a=1}^{\floor*{w_j\left(1-\frac{b}{n_j}\right)}} \delta\left(s - \left( \frac{c}{e_j} + \frac{b}{e_j n_j} + \frac{a}{e_j w_j} \right)\right) \dd{s}
    \\&= \sum_{j=1}^g \sum_{c=0}^{e_j-1} \sum_{b=1}^{n_j-1} \#\left\{a\in\Z \,\middle|\, 1\leqslant a\leqslant \floor*{w_j\left(1-\frac{b}{n_j}\right)},\  \frac{c}{e_j} + \frac{b}{e_j n_j} + \frac{a}{e_j w_j} \leqslant r \right\}
    \\&= \sum_{j=1}^g \sum_{c=0}^{e_j-1} \sum_{b=1}^{n_j-1} \#\left\{a\in\Z \,\middle|\, 1\leqslant a\leqslant \floor*{w_j\left(1-\frac{b}{n_j}\right)},\  a \leqslant w_j\left(e_j r - c - \frac{b}{n_j}\right) \right\}
    \\&= \sum_{j=1}^g \sum_{c=0}^{e_j-1} \sum_{b=1}^{n_j-1} \floor*{w_j\left(e_j r - c - \frac{b}{n_j}\right)} \I{\left[0,w_j\left(1-\frac{b}{n_j}\right)\right)}\!\left(w_j\left(e_j r - c - \frac{b}{n_j}\right)\right)
        \\&\qquad + \floor*{w_j\left(1-\frac{b}{n_j}\right)} \I{\left[w_j\left(1-\frac{b}{n_j}\right),+\infty\right)}\!\left(w_j\left(e_j r - c - \frac{b}{n_j}\right)\right)
    \\&= \sum_{j=1}^g \sum_{c=0}^{e_j-1} \sum_{b=1}^{n_j-1} \floor*{w_j\left(e_j r - c - \frac{b}{n_j}\right)} \I{\left[\frac{b}{n_j},1\right)}\!\left(e_j r - c\right)
    + \floor*{w_j\left(1-\frac{b}{n_j}\right)} \I{[1,+\infty)}\!\left(e_j r - c\right).
\end{align*}

\vskip 2mm 

Now we need the following facts:

\vskip 2mm 
\begin{enumerate}[label={\arabic*)}]
    \item For the first term, let $f(x)$ be a function with support in $[0,1)$.  Since
        \begin{align*}
            &x-c\in [0,1) \iff x\in [c,c+1) \iff c=\floor{x},
            &0\leqslant c\leqslant e-1 \iff x\in [0,e),
        \end{align*} then we have
        \begin{equation*}
            \sum_{c=0}^{e-1} f(x-c)
            = f(x - \floor{x}) \I{[0,e)}(x)
            = f(\fract{x}) \I{[0,e)}(x)
        \end{equation*}
    \item For the second term, we calculate
        \begin{align*}
            \sum_{c=0}^{e-1} \I{[1,+\infty)}(x - c)
            &= \sum_{c=0}^{e-1} \I{[c,+\infty)}(x - 1)
            = \sum_{c=1}^{e} \I{[c,+\infty)}(x)
            = \#\{c\in\Z \,|\, 1\leqslant i \leqslant e,\ c\leqslant x \}
            \\&= \floor{x} \I{[0,e)}(x) + e\,\I{[e,+\infty)}(x).
        \end{align*}
        
    \item Also for the second term, we have
        \begin{align*}
            \sum_{b=1}^{n_j-1} \floor*{w_j\left(1-\frac{b}{n_j}\right)}
            &= \sum_{b=1}^{n_j-1} \sum_{a=1}^{\floor*{w_j\left(1-\frac{b}{n_j}\right)}} 1
            = \#\left\{\frac{b}{n_j}+\frac{a}{w_j}<1 \,\middle|\, 1\leq b \leq n_j-1,\, 1\leq a \leq w_j-1\right\}
            \\&= \frac{(n_j -1)(w_j -1)}{2}.
        \end{align*}
\end{enumerate}

Using these calculations, we continue:
\begin{align*}
    \#\{\alpha_i\leqslant r\}
    &= \sum_{j=1}^g \sum_{b=1}^{n_j-1} \floor*{w_j\left(\fract{e_j r} - \frac{b}{n_j}\right)} \I{\left[\frac{b}{n_j},1\right)}\!\left(\fract{e_j r}\right)
        \\&\qquad + \floor*{w_j\left(1-\frac{b}{n_j}\right)} \left( \floor{e_j r} \I{[0,e_j)}(e_j r) + e_j\,\I{[e_j,+\infty)}(e_j r) \right)
    \\&= \sum_{j=1}^g \sum_{b=1}^{n_j-1} \floor*{w_j\left(\fract{e_j r} - \frac{b}{n_j}\right)} \I{\left[\frac{b}{n_j},1\right)}\!\left(\fract{e_j r}\right)
        \\&\qquad + \sum_{j=1}^g \sum_{b=1}^{n_j-1} \floor*{w_j\left(1-\frac{b}{n_j}\right)} \floor{e_j r}
    \\&= \sum_{j=1}^g \sum_{b=1}^{n_j-1} \left(w_j\left(\fract{e_j r} - \frac{b}{n_j}\right) - \fract*{w_j\left(\fract{e_j r} - \frac{b}{n_j}\right)}\right)\I{\left[\frac{b}{n_j},1\right)}\!\left(\fract{e_j r}\right)
        \\&\qquad + \sum_{j=1}^g \frac{(n_j -1)(w_j -1)}{2} (e_j r - \fract{e_j r}).
\end{align*}

Now, we use the following facts:
\vskip 2mm
\begin{enumerate}[label={\arabic*)}]
    \item For the first term, we calculate
        \begin{align*}
            \sum_{b=1}^{n-1} \left(x-\frac{b}{n}\right) \I{\left[\frac{b}{n},1\right)}(x)
            &= \floor{n x} x - \frac{1}{n} \sum_{b=1}^{n-1} b \I{[b,n)}(n x)
            = \floor{n x} x - \frac{1}{n} \sum_{b=0}^{\floor{n x}} b
            \\&= \floor{n x} x - \frac{1}{n} \frac{(\floor{n x}+1)\floor{n x}}{2}
            = \floor{n x} \left(x - \frac{\floor{n x}+1}{2 n}\right)
            \\&= (n x - \fract{n x}) \left(\frac{x}{2} - \frac{1-\fract{n x}}{2 n}\right)
            = \frac{n}{2} x^2 - \frac{1}{2} x + \frac{1}{2n}\fract{n x}(1-\fract{n x}). 
        \end{align*}
    \item Also for the first term, we use 
        \begin{align*}
            \fract{n_j\fract{e_jr}}
            = n_j\fract{e_jr} - \floor{n_je_jr-n_j\floor{e_jr}}
            = n_je_jr - \floor{n_je_jr}
            = \fract{n_je_jr}
            = \fract{e_{j-1}r}.
        \end{align*}
    \item For the second term,
        \begin{align*}
            \sum_{j=1}^g \frac{(n_j -1)(w_j -1)}{2} e_j
            = \frac{\mu_g}{2}.
        \end{align*}
\end{enumerate}

With this, we finish our calculations:

\begin{align*}
    &\hspace{-2mm}\#\{\alpha_i\leqslant r\} =
    \\&= \sum_{j=1}^g w_j \sum_{b=1}^{n_j-1} \left(\fract{e_j r} - \frac{b}{n_j}\right) \I{\left[\frac{b}{n_j},1\right)}\!\left(\fract{e_j r}\right)
    - \sum_{j=1}^g \sum_{b=1}^{n_j-1} \fract*{w_j\left(\fract{e_j r} - \frac{b}{n_j}\right)} \I{\left[\frac{b}{n_j},1\right)}\!\left(\fract{e_j r}\right)
        \\&\qquad + \sum_{j=1}^g \frac{(n_j -1)(w_j -1)}{2} (e_j r - \fract{e_j r})
    \\&= \sum_{j=1}^g \frac{w_j n_j}{2} \fract{e_j r}^2
        - \sum_{j=1}^g \frac{w_j}{2} \fract{e_j r}
        + \sum_{j=1}^g \frac{w_j}{2n_j}\fract{e_{j-1} r}(1-\fract{e_{j-1} r})
        \\&\qquad - \sum_{j=1}^g \sum_{b=1}^{n_j-1} \fract*{w_j\left(\fract{e_j r} - \frac{b}{n_j}\right)} \I{\left[\frac{b}{n_j},1\right)}\!\left(\fract{e_j r}\right)
        + \sum_{j=1}^g \frac{(n_j -1)(w_j -1)}{2} (e_j r - \fract{e_j r})
    \\&= \sum_{j=1}^g \frac{(n_j -1)(w_j -1)}{2} e_j r
        - \sum_{j=1}^g \frac{n_j w_j - n_j - w_j +1}{2} \fract{e_j r}
        + \sum_{j=1}^g \frac{w_j n_j}{2} \fract{e_j r}^2
        - \sum_{j=1}^g \frac{w_j}{2} \fract{e_j r}
        \\&\qquad + \sum_{j=1}^g \frac{w_j}{2n_j}\fract{e_{j-1} r}(1-\fract{e_{j-1} r})
        - \sum_{j=1}^g \sum_{b=1}^{n_j-1} \fract*{w_j\left(\fract{e_j r} - \frac{b}{n_j}\right)} \I{\left[\frac{b}{n_j},1\right)}\!\left(\fract{e_j r}\right)
    \\&= \frac{\mu_g}{2} r
        + \sum_{j=1}^g \frac{n_j -1}{2} \fract{e_j r}
        - \sum_{j=1}^g \frac{n_j w_j}{2} \fract{e_j r} (1 - \fract{e_j r})
        \\&\qquad + \sum_{j=1}^g \left(\frac{n_{j-1} w_{j-1}}{2} + \frac{l_j}{2n_j}\right)\fract{e_{j-1} r}(1-\fract{e_{j-1} r})
        - \sum_{j=1}^g \sum_{b=1}^{n_j-1} \fract*{w_j\left(\fract{e_j r} - \frac{b}{n_j}\right)} \I{\left[\frac{b}{n_j},1\right)}\!\left(\fract{e_j r}\right)
    \\&= \frac{\mu_g}{2} r
        + \sum_{j=1}^g \frac{n_j -1}{2} \fract{e_j r}
        - \frac{n_g w_g}{2} \fract{e_g r} (1 - \fract{e_g r})
        + \frac{n_0 w_0}{2} \fract{e_0 r} (1 - \fract{e_0 r})
        \\&\qquad + \sum_{j=1}^g \frac{l_j}{2n_j} \fract{e_{j-1} r}(1-\fract{e_{j-1} r})
        - \sum_{j=1}^g \sum_{b=1}^{n_j-1} \fract*{w_j\left(\fract{e_j r} - \frac{b}{n_j}\right)} \I{\left[\frac{b}{n_j},1\right)}\!\left(\fract{e_j r}\right)
    \\&= \frac{\mu_g - n_g w_g}{2} r
        + \frac{n_g w_g}{2} r^2
        + \sum_{j=1}^g \frac{n_j -1}{2} \fract{e_j r}
        + \frac{1}{2} \fract{e_0 r} (1 - \fract{e_0 r})
        \\&\qquad + \sum_{j=1}^g \frac{l_j}{2n_j} \fract{e_{j-1} r}(1-\fract{e_{j-1} r})
        - \sum_{j=1}^g \sum_{b=1}^{n_j-1} \fract*{w_j\left(\fract{e_j r} - \frac{b}{n_j}\right)} \I{\left[\frac{b}{n_j},1\right)}\!\left(\fract{e_j r}\right).
\end{align*}


This finishes the proof of Theorem \ref{theor:count}.

\subsubsection{Proof of Theorem \ref{theor:phi}}


We want to calculate
\begin{align*}
    \phi_f(r)
    &= \frac{1}{2} r^2 - \frac{1}{\mu} \#\{\alpha_i\leqslant r\}
    = \frac{1}{\mu} \left(\frac{\mu}{2} r^2 - \#\{\alpha_i\leqslant r\}\right).
\end{align*}
Theorem \ref{theor:count} states
\begin{align*}
    \#\{\alpha_i\leqslant r\}
    &= \frac{\mu_g - n_g w_g}{2} r
        + \frac{n_g w_g}{2} r^2
        + \sum_{j=1}^g \frac{n_j -1}{2} \fract{e_j r}
        + \frac{1}{2} \fract{e_0 r} (1 - \fract{e_0 r})
        \\&\quad + \sum_{j=1}^g \frac{l_j}{2n_j} \fract{e_{j-1} r}(1-\fract{e_{j-1} r})
        - \sum_{j=1}^g \sum_{b=1}^{n_j-1} \fract*{w_j\left(\fract{e_j r} - \frac{b}{n_j}\right)} \I{\left[\frac{b}{n_j},1\right)}\!\left(\fract{e_j r}\right),
\end{align*}
and we can rewrite the first two terms as follows:
\begin{align*}
    \frac{\mu_g - n_g w_g}{2} r + \frac{n_g w_g}{2} r^2
    &= \frac{\mu_g}{2} r^2 + \frac{n_g+w_g-1-n_g\mu_{g-1}}{2} r^2 +\frac{-n_g-w_g+1-n_g\mu_{g-1}}{2} r
    \\&= \frac{\mu_g}{2} r^2 - \frac{n_g+w_g-1-n_g\mu_{g-1}}{2} r(1-r).
\end{align*}
The numerator can be rewritten using lemmas \ref{lemma:wj} and \ref{lemma:ngmug}:
\begin{align*}
    n_g+w_g-1-n_g\mu_{g-1}
    &= n_g
        + \left(
            \sum_{k=1}^g l_k \frac{e_k^2 n_k}{n_g} + \frac{e_0^2}{n_g}
        \right)
        - 1
        - \left(
            \sum_{k=1}^{g-1}  l_k \left(\frac{e_k^2 n_k}{n_g} - \frac{e_k^2 n_k}{e_{k-1}}\right)
            + \frac{e_0^2}{n_g}
            + n_g
            - 2e_0
        \right)
    \\&= l_g
        + 2e_0
        - 1
        + \sum_{k=1}^{g-1}  l_k e_k
    = 2e_0
        - 1
        + \sum_{j=1}^g  l_j e_j.
\end{align*}
Combining these expressions we get
\begin{align*}
    2\mu\, \phi_f(r)
    &= \left(2e_0 - 1 + \sum_{j=1}^g  l_j e_j\right) r(1-r)
        - \sum_{j=1}^g (n_j -1) \fract{e_j r}
        - \fract{e_0 r} (1 - \fract{e_0 r})
        \\&\quad - \sum_{j=1}^g \frac{l_j}{n_j} \fract{e_{j-1} r}(1-\fract{e_{j-1} r})
        + \sum_{j=1}^g \sum_{b=1}^{n_j-1} 2 \fract*{w_j\left(\fract{e_j r} - \frac{b}{n_j}\right)} \I{\left[\frac{b}{n_j},1\right)}\!\left(\fract{e_j r}\right).
\end{align*}
This finishes the proof of Theorem \ref{theor:phi}.

\section{Characterization of the limit distribution for irreducible plane curves}\label{section:characterization}

In this section we will prove the main result of the paper which is the following:

\begin{theorem}\label{theor:characterization}
    Let $\left(f^{(i)}\right)_{i\geqslant 0}$ be a sequence of irreducible plane curves. Then, the distribution of Hodge spectral exponents of $f^{(i)}$ has limit
    \begin{equation*}
        \lim_{i\rightarrow +\infty} D_{f^{(i)}}(s) = N_2(s)
    \end{equation*}
    if, and only if,
    \begin{equation*}
        \lim_{i\rightarrow +\infty} \frac{\beta_g^{(i)}}{\mu^{(i)}} = 0,
    \end{equation*}
    where $\frac{\beta_g^{(i)}}{e_0^{(i)}}$
    is the last characteristic exponent of $f^{(i)}$ and $\mu^{(i)}$ is the Milnor number of $f^{(i)}$.
\end{theorem}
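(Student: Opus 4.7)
Both directions rely on the closed formula in Theorem \ref{theor:phi}. Writing $\beta_g = e_0 + \sum_{j=1}^g l_j e_j$, the coefficient there reads $2e_0 - 1 + \sum_j l_j e_j = e_0 + \beta_g - 1$, so
\[
 \phi_f(r) \;=\; \frac{e_0 + \beta_g - 1}{2\mu}\,r(1-r) \;-\; \frac{E(r)}{2\mu},
\]
where $E(r)$ collects the four fractional-part terms of Theorem \ref{theor:phi}. Two elementary estimates drive the entire argument: \textbf{(a)} $\sum_{j=1}^g (n_j - 1) < 2e_0$, since $n_j \leqslant e_{j-1}$ and $\sum_{j=0}^{g-1} e_j < 2e_0$ (as $e_j \leqslant e_{j-1}/2$); and \textbf{(b)} $\sum_{j=1}^g l_j/n_j \leqslant (\beta_g - e_0)/2$, because $n_j e_j = e_{j-1} \geqslant 2$ forces $l_j/n_j \leqslant l_j e_j/2$. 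Combined with $e_0 \leqslant \sqrt\mu + 1$ (from $(e_0 - 1)^2 \leqslant \mu$), these give $e_0/\mu \to 0$ whenever $\mu \to \infty$.

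\textbf{Sufficiency.} Assume $\beta_g^{(i)}/\mu^{(i)} \to 0$. Bounding each summand of $E(r)$ via $\fract{x} < 1$, $\fract{x}(1-\fract{x}) \leqslant 1/4$, and (a)--(b) (noting that the $\sum_{j,b}$-sum has at most $\sum_j (n_j - 1) < 2e_0$ non-negative summands each bounded by $1$), one obtains $|E(r)| \leqslant C \beta_g$ for an absolute constant $C$. Together with the trivial bound $|M(r)| \leqslant \beta_g/(2\mu)$ for the main term, this gives $|\phi_{f^{(i)}}(r)| \leqslant C' \beta_g^{(i)}/\mu^{(i)}$ uniformly in $r \in [0,1)$. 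Hence $\phi_{f^{(i)}} \to 0$ uniformly, which implies pointwise convergence of the cumulative distribution functions and therefore $D_{f^{(i)}} \to N_2$ weakly.

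\textbf{Necessity.} Weak convergence $D_{f^{(i)}} \to N_2$ is equivalent to $\phi_{f^{(i)}}(r) \to 0$ pointwise on $[0,1)$, since the limit CDF $r \mapsto r^2/2$ is continuous. A step-function argument forces $\mu^{(i)} \to \infty$: if $\mu$ stayed bounded along a subsequence, $F_{f^{(i)}}$ would take values in a fixed discrete set, contradicting pointwise convergence to the continuously varying $r^2/2$. Hence $e_0^{(i)}/\mu^{(i)} \to 0$. Since $|\phi_f| \leqslant 1/2$, dominated convergence yields $\int_0^1 \phi_{f^{(i)}}(r)\,dr \to 0$. Using $\int_0^1 \fract{er}\,dr = 1/2$ and $\int_0^1 \fract{er}(1-\fract{er})\,dr = 1/6$ for every positive integer $e$, and setting $I_{j,b} := \int_0^1 \fract{w_j(\fract{e_j r} - b/n_j)}\,\I{[b/n_j, 1)}(\fract{e_j r})\,dr \in [0,1]$, one computes
\[
 2\mu\!\int_0^1 \!\phi_f(r)\,dr \;=\; \frac{e_0 + \beta_g - 1}{6} - \frac{1}{2}\sum_j (n_j - 1) - \frac{1}{6} - \frac{1}{6}\sum_j \frac{l_j}{n_j} + 2\!\sum_{j,b}\! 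I_{j,b}.
\]
Pass to a subsequence along which $\beta_g/\mu \to c$ and $\sum_j l_j/(n_j\mu) \to L$ both exist (both ratios are bounded, since $\beta_g \leqslant \mu + \sqrt\mu + 1$). Dividing the identity above by $\mu$ and letting $i \to \infty$, and using $e_0/\mu \to 0$, estimate (a), and $\sum_{j,b} I_{j,b}/\mu \leqslant \sum (n_j-1)/\mu \to 0$, every contribution vanishes except those from the main term and from $\sum l_j/n_j$, yielding the identity $c = L$. But estimate (b) forces $L \leqslant c/2$, so $c \leqslant c/2$, hence $c = 0$. Since every subsequence of $\beta_g^{(i)}/\mu^{(i)}$ admits a further subsequence converging to $0$, the full sequence converges to $0$.

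\textbf{Main obstacle.} The delicate point is necessity: pointwise smallness of $\phi_f$ does not a priori prevent the oscillating error $E(r)$ from cancelling the main term $M(r)$ at each fixed $r$, and coarse pointwise bounds on $E$ are of the same order as $M$. The resolution is to integrate once on $[0,1]$---which averages out the fractional-part oscillations cleanly---and to then invoke estimate (b) as the only nontrivial relation between $\sum l_j/n_j$ and $\beta_g$. Without (b) the integrated identity $c = L$ would be consistent for arbitrary $c \geqslant 0$ and the argument would collapse.
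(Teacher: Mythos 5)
Your proof is correct, and it diverges from the paper's own argument in genuinely different ways, most notably in the necessity direction. For sufficiency the paper first reduces (via the forthcoming Theorem \ref{theor:notLctImplies}) to the case $e_0\to+\infty$ and then computes term-by-term limits of the formula of Theorem \ref{theor:phi}; your single uniform estimate $|\phi_f(r)|\leqslant C\,\beta_g/\mu$, resting on $\sum_j(n_j-1)<2e_0\leqslant 2\beta_g$ and $\sum_j l_j/n_j\leqslant\tfrac12\sum_j l_je_j$, packages the same bounds more efficiently and needs no case split. For necessity the paper likewise invokes the log-canonical-threshold theorem to force $e_0\to+\infty$, and then defeats the possible cancellation between the main term and the oscillating term $\sum_j\frac{l_j}{n_j}\fract{e_{j-1}r}(1-\fract{e_{j-1}r})$ by a \emph{pointwise} inequality on $r\in\left[\tfrac14,\tfrac34\right]$: there $r(1-r)\geqslant\tfrac{3}{16}$ while the oscillating term is at most $\tfrac18\sum_j l_je_j$, so the difference is bounded below by $\tfrac1{16}\sum_j l_je_j$, forcing $\sum_j l_je_j/\mu\to0$; a separate subsequence computation then shows that $\beta_g/\mu\to0$ already implies $e_0\to+\infty$. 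You instead obtain $\mu\to\infty$ directly from the discreteness of the empirical distribution function (hence $e_0/\mu\to 0$ from $(e_0-1)^2\leqslant\mu$, without ever needing $e_0\to\infty$ or the lct theorem), and you kill the oscillation by integrating over $[0,1]$, where the fractional-part terms average exactly to $1/2$ and $1/6$; the inequality $\sum_j l_j/n_j\leqslant\tfrac12(\beta_g-e_0)$ then closes the loop via the compactness argument on $c$ and $L$. All your supporting estimates check out ($n_j-1<e_{j-1}$ and $e_j\leqslant e_0/2^j$ give (a); $e_{j-1}\geqslant 2$ gives (b); $\sum_j l_je_j\leqslant\mu$ gives the boundedness needed to extract convergent subsequences, and each $I_{j,b}\in[0,1]$ so the double sum is $O(e_0)$). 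Your route is self-contained and the averaging is arguably more robust against the oscillation you correctly identify as the main obstacle; the paper's pointwise lower bound on $\left[\tfrac14,\tfrac34\right]$ has the side benefit of being essentially the same computation that later yields the explicit intervals of dominating values in Section \ref{section:dominating}.
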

\begin{proof}
Let $f$ define an irreducible plane curve with Puiseux pairs $(n_1,l_1),\dots,(n_g,l_g)$.
We want to characterize all sequences of irreducible plane curves such that
$ \lim D_f(s) = N_2(s)$ or equivalently
$ \lim \phi_f(r) = 0$. Recall that Theorem \ref{theor:phi} states:

\begin{align*}
    \phi_f(r)
    &= \frac{1}{2\mu} \left( \biggl(2e_0 - 1 + \sum_{j=1}^g  l_j e_j\biggr) r(1-r)
        - \sum_{j=1}^g (n_j -1) \fract{e_j r}
        - \fract{e_0 r} (1 - \fract{e_0 r})
        \right.\\&\left.\hspace{15mm} - \sum_{j=1}^g \frac{l_j}{n_j} \fract{e_{j-1} r}(1-\fract{e_{j-1} r})
        + \sum_{j=1}^g \sum_{b=1}^{n_j-1} 2 \fract*{w_j\left(\fract{e_j r} - \frac{b}{n_j}\right)} \I{\left[\frac{b}{n_j},1\right)}\!\left(\fract{e_j r}\right)
        \right)
\end{align*}
with
\begin{equation*}
    \mu
    = \sum_{j=1}^g l_j e_j (e_{j-1}-1) + (e_0-1)^2.
\end{equation*}

\vskip 2mm

We will see in forthcoming Theorem \ref{theor:notLctImplies} that $\lim \phi_f(r) = 0$ implies $\lct(f) \rightarrow 0$, so in the following we will assume this to be the case. 
This is equivalent, by Corollary \ref{prop:lct}, to assuming $e_0\rightarrow +\infty$. By Lemma \ref{lemma:mu}, we have $ \frac{1}{\mu}  \xrightarrow{e_0\rightarrow +\infty} 0$ and we also have 

\begin{align*}
    \frac{1}{\mu} \left( \sum_{j=1}^g (n_j -1) \right)  \xrightarrow{e_0\rightarrow +\infty} 0    \; , \hskip 1cm 
    \frac{1}{\mu} \left( \sum_{j=1}^g \sum_{b=1}^{n_j-1} 1 \right)  \xrightarrow{e_0\rightarrow +\infty} 0
\end{align*}
and 
\begin{equation*}
    \frac{1}{\mu} \left( 2e_0 - 1 \right)
    = \frac{2e_0 - 1}{\sum_{j=1}^g l_j e_j (e_{j-1}-1) + (e_0-1)^2}
    \xrightarrow{e_0\rightarrow +\infty} 0.
\end{equation*}
These limits imply
\begin{align*}
    \lim \phi_f(r) = 0
    \iff
    e_0\rightarrow +\infty,\quad
    \frac{1}{\mu} \left(
        \biggl(\sum_{j=1}^g  l_j e_j\biggr) r(1-r)
        -
        \sum_{j=1}^g \frac{l_j}{n_j} \fract{e_{j-1} r}(1-\fract{e_{j-1} r})
    \right) \rightarrow 0
\end{align*}
since all functions of $r$ are bounded between 0 and 1 (excluding signs and coefficients). 
We want the limit of this expression to be 0 for $r\in[0,1)$. In particular it must be 0 for $r\in\left[\frac{1}{4},\frac{3}{4}\right]$ and, in this interval,
\begin{align*}
    &\sum_{j=1}^g \frac{l_j}{n_j} \fract{e_{j-1} r}(1-\fract{e_{j-1} r})
    \leqslant \sum_{j=1}^g \frac{l_j}{n_j} \frac{1}{4}
    \leqslant \sum_{j=1}^g l_j \frac{1}{8}
    \leqslant \frac{1}{8} \sum_{j=1}^g l_j e_j \,,
    \\&\left(\sum_{j=1}^g l_j e_j\right) r(1-r)
    \geqslant \frac{3}{16} \sum_{j=1}^g  l_j e_j
\end{align*}
and thus
\begin{align*}
    \frac{1}{\mu} \left(
        \biggl(\sum_{j=1}^g  l_j e_j\biggr) r(1-r)
        -
        \sum_{j=1}^g \frac{l_j}{n_j} \fract{e_{j-1} r}(1-\fract{e_{j-1} r})
    \right)
    \geqslant \frac{1}{\mu} \left(\frac{1}{16} \sum_{j=1}^g  l_j e_j\right).
\end{align*}
Therefore,
\begin{align*}
    \lim \phi_f(r) = 0
    \implies
    \frac{\sum_{j=1}^g  l_j e_j}{\mu} \rightarrow 0.
\end{align*}
Furthermore, if we assume $\frac{\sum_{j=1}^g  l_j e_j}{\mu} \rightarrow 0$, then for all $r\in[0,1)$ we have
\begin{align*}
    0 &\leqslant
    \frac{1}{\mu} \abs{
        \biggl(\sum_{j=1}^g  l_j e_j\biggr) r(1-r)
        -
        \sum_{j=1}^g \frac{l_j}{n_j} \fract{e_{j-1} r}(1-\fract{e_{j-1} r})
    }
    \\ & \leqslant
    \frac{1}{\mu} \max\left\{
        \sum_{j=1}^g  l_j e_j,\ 
        \sum_{j=1}^g \frac{l_j}{n_j}
    \right\}
    \leqslant
    \frac{1}{\mu} \sum_{j=1}^g  l_j e_j
    \rightarrow 0.
\end{align*}
In conclusion, we have proved
\begin{align*}
    \lim \phi_f(r) = 0
    \iff
    e_0\rightarrow +\infty,\quad
    \frac{\sum_{j=1}^g  l_j e_j}{\mu} \rightarrow 0.
\end{align*}

We can rewrite this expression using the definitions $  e_0 = n_g\cdots n_1
    ,\quad
    e_i = n_g\cdots n_{i+1}$ which imply $ \frac{e_0}{e_i} = n_i\cdots n_1$. Then we have
\begin{align*}
    \frac{\beta_g}{e_0} &= 1 + \frac{l_1}{n_1} + \dots + \frac{l_i}{n_1\cdots n_i} + \dots + \frac{l_g}{n_1\cdots n_g}
    = 1 + \frac{l_1}{\frac{e_0}{e_1}} + \dots + \frac{l_i}{\frac{e_0}{e_i}} + \dots + \frac{l_g}{\frac{e_0}{e_g}}
    \\&= \frac{1}{e_0} (e_0 + l_1 e_1 + \dots + l_i e_i + \dots + l_g e_g)
    = \frac{1}{e_0} \left(e_0 + \sum_{j=1}^g  l_j e_j\right) ,
\end{align*}
which implies  $\beta_g = e_0 + \sum_{j=1}^g  l_j e_j.$
Since we are in the case $e_0\rightarrow +\infty$ and $\frac{e_0}{\mu}\xrightarrow{e_0\rightarrow +\infty} 0$, the previous equivalence can be written as
\begin{align*}
    \lim \phi_f(r) = 0
    \iff
    e_0\rightarrow +\infty,\quad
    \frac{\beta_g}{\mu} \rightarrow 0.
\end{align*}

Now assume that we have a sequence $\left(f^{(i)}\right)_{i\geqslant 0}$ with
$e_0^{(i)}\not\rightarrow +\infty$. Then
\begin{align*}
    e_0^{(i)}\not\rightarrow +\infty
    &\implies \exists \;  \text{subsequence with $e_0^{(i)}$ constant}
    \\&\implies \exists \;  \text{subsequence $\left(f^{(i)}\right)_{i\in I}$ with $g^{(i)},e_0^{(i)},\dots,e_g^{(i)}$ constant}.
\end{align*}
For this subsequence,
\begin{align*}
    &\frac{\beta_g^{(i)}}{\mu^{(i)}}
    = \frac{e_0 + \sum_{j=1}^g  l_j^{(i)} e_j}{\sum_{j=1}^g l_j^{(i)} e_j (e_{j-1}-1) + (e_0-1)^2} \rightarrow 0
    \\\iff& \frac{\sum_{j=1}^g  l_j^{(i)} e_j}{\sum_{j=1}^g l_j^{(i)} e_j (e_{j-1}-1)} \rightarrow 0
    \\\iff& \frac{\sum_{j=1}^g l_j^{(i)} e_j (e_{j-1}-1)}{\sum_{j=1}^g  l_j^{(i)} e_j} \rightarrow +\infty
    \\\iff& \exists k\in\{1,\dots,g\}\quad \frac{l_k^{(i)} e_k (e_{k-1}-1)}{\sum_{j=1}^g  l_j^{(i)} e_j} \rightarrow +\infty
    \\\iff& \exists \; k\in\{1,\dots,g\}\quad \frac{\sum_{j=1}^g  l_j^{(i)} e_j}{l_k^{(i)}} \rightarrow 0
    \\\iff& \exists \; k\in\{1,\dots,g\}\colon\forall j\in\{1,\dots,g\}\quad \frac{l_j^{(i)} e_j}{l_k^{(i)}} \rightarrow 0 \,.
\end{align*}
But since $\frac{l_k^{(i)} e_k}{l_k^{(i)}} \not\rightarrow 0$, we can not have $\frac{\beta_g^{(i)}}{\mu^{(i)}} \rightarrow 0$ for the subsequence and thus it can not be true for the full sequence either. This proves
\begin{align*}
    e_0\not\rightarrow +\infty
    \implies
    \frac{\beta_g}{\mu} \not\rightarrow 0, \hskip 10mm {\rm and \; thus} \hskip 10mm  \frac{\beta_g}{\mu} \rightarrow 0
    \implies
    e_0\rightarrow +\infty.
\end{align*}
In conclusion,
\begin{align*}
    \lim \phi_f(r) = 0
    \iff
    \frac{\beta_g}{\mu} \rightarrow 0.
\end{align*}
\end{proof}

The following corollary is a generalization of the result of K.~Saito presented in Proposition \ref{prop:limitSaito}.

\begin{corollary}
    Let $f\in\C\{x,y\}$ be an irreducible plane curve with Puiseux pairs $(n_1,l_1),\dots,(n_g,l_g)$. Then, taking a sequence of such functions with the limit $n_k\rightarrow +\infty$ (keeping all other $n_j$ and $l_j$ fixed), one has
\begin{equation*}
    \lim_{i\rightarrow +\infty} D_{f^{(i)}}(s) = N_2(s).
\end{equation*}
\end{corollary}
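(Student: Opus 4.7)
The plan is to reduce to the criterion of Theorem \ref{theor:characterization}, which says that the distribution of Hodge spectral exponents converges to $N_2(s)$ if and only if $\beta_g^{(i)}/\mu^{(i)} \to 0$. Thus it suffices to estimate the growth rates of $\beta_g$ and $\mu$ in $n_k$ when all other Puiseux data are held fixed, and conclude that the quotient tends to $0$.

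First I would record how the auxiliary invariants $e_i$ scale with $n_k$. From the definition $e_i = n_g n_{g-1}\cdots n_{i+1}$ (with $e_g = 1$), the factor $n_k$ appears in $e_i$ precisely when $i+1 \leqslant k$. Consequently, under the assumption that all $l_j$ and all $n_j$ with $j\neq k$ are fixed while $n_k\to +\infty$:
\begin{itemize}
    \item $e_i$ remains constant for $i \geqslant k$;
    \item $e_i$ grows linearly in $n_k$ for $i < k$ (in particular $e_0$ does).
\end{itemize}

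Next I would plug these growth rates into the expressions
\[
    \beta_g = e_0 + \sum_{j=1}^g l_j e_j, \qquad
    \mu = \sum_{j=1}^g l_j e_j(e_{j-1}-1) + (e_0-1)^2
\]
(the second of which is Lemma \ref{lemma:mu}). Since each $l_j$ is fixed, $\beta_g = O(n_k)$. For the denominator, the single term $(e_0-1)^2$ already grows like $n_k^2$, so $\mu \geqslant C\,n_k^2$ for some positive constant $C$ depending only on the frozen data. Combining these two bounds gives $\beta_g/\mu = O(1/n_k) \to 0$, and Theorem \ref{theor:characterization} finishes the proof.

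There is no real obstacle here: the work has been done in Theorem \ref{theor:characterization}, and the only thing to check is the elementary growth behavior of the invariants $e_i$ in $n_k$. The one point worth verifying carefully is that the argument covers both the cases $k = g$ and $k < g$; in the former, the dominant contribution to $\mu$ comes from $(e_0-1)^2$ alone, while in the latter the first sum also contains terms of order $n_k^2$, but in either case the $n_k^2$ scaling of $\mu$ against the $n_k$ scaling of $\beta_g$ yields the desired vanishing of $\beta_g/\mu$.
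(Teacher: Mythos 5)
Your proposal is correct and follows essentially the same route as the paper: reduce to the criterion $\beta_g/\mu\to 0$ of Theorem \ref{theor:characterization} and compare the linear growth of $\beta_g$ in $n_k$ against the quadratic growth of $\mu$, using the fact that $e_i$ scales linearly in $n_k$ for $i<k$ and is constant for $i\geqslant k$. Your explicit lower bound $\mu\geqslant C n_k^2$ coming from the $(e_0-1)^2$ term is, if anything, a slightly cleaner way to phrase the step the paper records as $\mu=O(n_k^2)$.
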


\begin{proof}
Consider $n_k\rightarrow +\infty$, $g$ fixed, $n_j$ fixed for $j\neq k$ and $l_j$ fixed $\forall j$. Then,
\begin{align*}
    \beta_g = e_0 + \sum_{j=1}^g l_j e_j = O(1) n_k + \sum_{j=1}^{k-1} O(1) n_k + \sum_{j=k}^{g} O(1) = O(n_k),
\end{align*}
where we use the usual asymptotic \textit{big O notation}, and
\begin{align*}
    \mu
    = \sum_{j=1}^g l_j e_j (e_{j-1}-1) + (e_0-1)^2
   = \sum_{j=1}^{k-1} O(n_k^2) + O(n_k) + \sum_{j=k+1}^g O(1) + O(n_k^2)
    = O(n_k^2).
\end{align*}
Here we used $e_i=n_g\cdots n_{i+1}$ and thus the assymptotic behaviour when $n_k\rightarrow +\infty$ is
$O(1)n_k$ for $i < k$ and $O(1)$ otherwise. Therefore
\begin{align*}
    \frac{\beta_g}{\mu} = O\left(\frac{n_k}{n_k^2}\right)
    \implies
    \frac{\beta_g}{\mu} \rightarrow 0.
\end{align*}
This satisfies the hypothesis for Theorem \ref{theor:characterization}, so
we get the desired result.
\end{proof}

\section{On the condition \texorpdfstring{$\lct \rightarrow 0$}{lct→0}} \label{LCT}

The examples provided by K.~Saito \cite{Ksaito1983zeroes} and Almir\'on and Schulze \cite{almiron2022limit} satisfy that the log-canonical thresholds of the sequence of hypersurfaces tend to zero. 
In this section we will prove that this is a necessary condition but it is not sufficient for the discrete distribution converging to the continuous distribution.

\begin{theorem}\label{theor:notLctImplies}
    Let $\left(f^{(i)}\right)_{i\geqslant 0}$ be a sequence of hypersurfaces of $\mathbb{C}^{n+1}$ with an isolated singular point at the origin. Then
    \begin{align*}
        \lct \left(f^{(i)}\right)\not\rightarrow 0 \implies
        \lim_{i\rightarrow +\infty} D_{f^{(i)}}(s) \neq N_{n+1}(s).
    \end{align*}
\end{theorem}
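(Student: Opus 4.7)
The plan is to prove the contrapositive: assuming $D_{f^{(i)}} \to N_{n+1}$ in the sense of distributions (equivalently, weakly as probability measures, or equivalently via the characteristic functions $\Chi_{f^{(i)}}$), I will show that $\lct(f^{(i)}) \to 0$. The one non-trivial input is the classical identity $\lct(f) = \alpha_1(f)$ for a hypersurface singularity, that is, the log-canonical threshold equals the minimal Hodge spectral exponent. For irreducible plane curves this is recorded in Corollary \ref{prop:lct}, and in general it is a standard consequence of Varchenko's and M.~Saito's work on the Hodge filtration on the vanishing cohomology. The rest of the argument is essentially the observation that $N_{n+1}$ has strictly positive density arbitrarily close to $0$, whereas $\lct \geqslant \epsilon$ forces $D_{f^{(i)}}$ to vanish on $[0,\epsilon)$.

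Concretely, suppose for contradiction that $\lct(f^{(i)}) \not\to 0$. Extract a subsequence along which $\lct(f^{(i)}) \geqslant \epsilon$ for some fixed $\epsilon > 0$. Using $\lct(f^{(i)}) = \alpha_1^{(i)}$, every Hodge spectral exponent of $f^{(i)}$ satisfies $\alpha_j^{(i)} \geqslant \epsilon$, so the probability measure $D_{f^{(i)}}$ is supported in $[\epsilon, n+1)$ and in particular
\begin{equation*}
    D_{f^{(i)}}\bigl([0,\epsilon/2]\bigr) = 0 \quad \text{for all } i.
\end{equation*}
On the other hand, from the convolution description in Definition \ref{def:N} one sees $N_{n+1}(s) = s^n/n!$ on $[0,1]$, so
\begin{equation*}
    \int_0^{\epsilon/2} N_{n+1}(s)\,\dd{s} = \frac{(\epsilon/2)^{n+1}}{(n+1)!} > 0.
\end{equation*}
Since $N_{n+1}$ assigns no mass to the two-point boundary of $[0,\epsilon/2]$, weak convergence $D_{f^{(i)}} \to N_{n+1}$ would force the left-hand side to converge to the strictly positive right-hand side, a contradiction. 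Equivalently, one may test both distributions against a nonnegative continuous bump supported in $(0, \epsilon/2)$ whose integral against $N_{n+1}$ is positive, while its integral against every $D_{f^{(i)}}$ is zero.

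The step I expect to be the main (and essentially only) obstacle is matching the notion of convergence used in the paper to a setting where the Portmanteau / L\'evy framework applies. Since both $D_f$ and $N_{n+1}$ are positive measures of total mass one supported in the compact interval $[0, n+1]$, and convergence of $\Chi_{f^{(i)}}(T)$ is the same as convergence of the Fourier transforms of the associated probability measures, this reduction is routine. Beyond that, no additional ingredient is needed apart from the identification $\lct(f) = \alpha_1(f)$, which makes the argument independent of the irreducible plane curve setting and hence applicable to the general hypersurface case stated in the theorem.
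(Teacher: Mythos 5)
Your proposal is correct and follows essentially the same route as the paper: extract a subsequence with $\lct \geqslant \epsilon$, observe that no spectral exponents (hence no mass of $D_{f^{(i)}}$) lie near $0$ while $N_{n+1}$ is strictly positive there, and conclude the limits cannot agree. The only difference is that you spell out the weak-convergence/Portmanteau justification more carefully than the paper, which treats the comparison of distributions near $0$ informally.
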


\begin{proof}
    We consider a sequence of hypersurfaces $\left(f^{(i)}\right)_{i\geqslant 0}$ with $\lct\left(f^{(i)}\right)\not\rightarrow 0$. Then, there exists $\varepsilon > 0$ and a subsequence $\left(f^{(i_j)}\right)_{j\geqslant 0}$ for some $(i_j)_{j\geqslant 0}\subseteq\N$ such that $\varepsilon \leqslant \lct\left(f^{(i_j)}\right)\ \forall j$. Thus,
    \begin{equation*}
        0 < \varepsilon \leqslant \lct\left(f^{(i_j)}\right) \leqslant \alpha_k^{(i_j)}\quad \forall j,k,
    \end{equation*}
    so there are no Hodge spectral exponents of any $f^{(i_j)}$ in the interval $(0,\varepsilon)$. In terms of the distribution,
    \begin{equation*}
        \lim_{j\rightarrow +\infty} D_{f^{(i_j)}}(s) = 0 \quad\text{for } s\in(0,\varepsilon).
    \end{equation*}
    However, the usual limit distribution satisfies $N_{n+1}(s) > 0 \quad\text{for } s\in(0,\varepsilon).$
    Therefore,
    \begin{equation*}
        \lim_{j\rightarrow +\infty} D_{f^{(i_j)}}(s) \neq N_{n+1}(s)
    \end{equation*}
    and thus
    \begin{equation*}
        \lim_{i\rightarrow +\infty} D_{f^{(i)}}(s) \neq N_{n+1}(s).
    \end{equation*}
or equivalently, 
        \begin{equation*}
            \lim_{i\rightarrow +\infty} \Chi_{f^{(i)}}(T) \neq \left(\frac{T-1}{\log{T}}\right)^{n+1} = \Fourier{N_{n+1}(s)}(\tau).
        \end{equation*}
\end{proof}

We will also prove
\begin{equation*}
    \lct\!\left(f^{(i)}\right)\rightarrow 0 \centernot\implies \lim_{i\rightarrow +\infty} D_{f^{(i)}}(s) = N_{n+1}(s).
\end{equation*}
by means of the following counterexample:
\begin{lemma}\label{lemma:counter}
    Consider a sequence of irreducible plane curves with Puiseux pairs $(2k,1), (2,2k^3+1)$ with $k\in\Z_{>0}$. Then
    \begin{equation*}
        \lct(f) = \frac{1}{2}\left(\frac{1}{2k}+\frac{1}{2k+1}\right) \rightarrow 0.
    \end{equation*}
    However, the limit of the distribution of Hodge spectral exponents is
    \begin{align*}
        \lim_{k\rightarrow +\infty} D_f(s)
        &= \I{\left[\frac{1}{2},\frac{3}{2}\right]}(s)
        \neq N_2(s).
    \end{align*}
\end{lemma}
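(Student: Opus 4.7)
The plan has two separable pieces. First, the log-canonical threshold claim is immediate from Corollary \ref{prop:lct}: with $g=2$, $(n_1,l_1)=(2k,1)$, $(n_2,l_2)=(2,2k^3+1)$ we get $e_1=n_2=2$, hence $\lct(f)=\tfrac{1}{2}\bigl(\tfrac{1}{2k}+\tfrac{1}{2k+1}\bigr)$, which tends to $0$ as $k\to\infty$.

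For the limit distribution, I would exploit the explicit enumeration in Theorem \ref{cor:exponents}. First compute the invariants: $e_0=4k$, $e_1=2$, $e_2=1$, $w_1=2k+1$, $w_2=2k^3+8k^2+4k+1$, and by Lemma \ref{lemma:mu} $\mu=2k^3+16k^2$. The branch $j=2$ (where $e_2=1$, $n_2=2$, so $b=1$ only) yields the exponents $\tfrac{1}{2}+\tfrac{a}{w_2}$ for $1\leqslant a\leqslant (w_2-1)/2$; note $w_2$ is odd. These are $(w_2-1)/2=k^3+4k^2+2k$ equally spaced points in $[\tfrac12,1)$ with gap $1/w_2\to 0$. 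The branch $j=1$ produces only $(n_1-1)(w_1-1)n_2/2=4k^2-2k$ exponents in $(0,1)$, a fraction $O(1/k)$ of $\mu$.

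Thus $D_f^{<1}$ has vanishing mass on $[0,\tfrac12)$ and concentrates uniformly on $[\tfrac12,1)$ with total mass tending to $\tfrac12$. The symmetry $\alpha_i\leftrightarrow 2-\alpha_i$ of the Hodge spectrum transfers this to uniform mass $\tfrac12$ on $(1,\tfrac32]$. Hence $D_f$ converges weakly to $\I{[1/2,3/2]}$, and this limit differs from $N_2$ since $N_2(s)=s>0$ on $(0,\tfrac12)$ while the limit vanishes there.

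The main obstacle is making the asymptotic equidistribution of the $j=2$ branch rigorous. I expect the cleanest route is to apply Theorem \ref{theor:count} directly and track which terms survive after division by $\mu$: only $\tfrac{\mu-n_g w_g}{2}r+\tfrac{n_g w_g}{2}r^2$ (contributing $-\tfrac{r}{2}+r^2$ in the limit) and the $j=2$ summand $\tfrac{l_2}{2n_2}\fract{2r}(1-\fract{2r})$ (contributing $\tfrac14\fract{2r}(1-\fract{2r})$) remain non-negligible; all remaining terms are $O(k^{-1})$. Adding these limits yields $0$ on $[0,\tfrac12)$ and $r-\tfrac12$ on $[\tfrac12,1)$, which is exactly $\int_0^r \I{[1/2,3/2]}(s)\,ds$, confirming the claimed limit.
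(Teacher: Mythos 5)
Your proposal is correct and, in its final rigorous form, follows essentially the same route as the paper: the paper applies the closed formula of Theorem \ref{theor:phi} (equivalent to your use of Theorem \ref{theor:count}), shows all terms except the leading quadratic one and the $j=2$ summand $\frac{l_2}{2n_2}\fract{e_1 r}(1-\fract{e_1 r})$ vanish after dividing by $\mu$, obtains $\lim\phi_f(r)=\frac12 r(1-r)-\frac14\fract{2r}(1-\fract{2r})$, and recovers the distribution by differentiation and symmetry. Your preliminary enumeration via Theorem \ref{cor:exponents} is a nice sanity check but, as you note, the count-formula computation is what makes the argument rigorous, and your termwise limits all check out.
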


\begin{proof}
 Consider a sequence of irreducible plane curves with Puiseux pairs $(2k,1),\, (2,2k^3+1)$ for $k\in\Z_{>0}$ ($k\rightarrow +\infty$). We have the following numerical invariants:
\begin{align*}
    e_2 = 1
    ,\qquad
    e_1 = 2
    ,\qquad
    e_0 = 4k
    ,
\end{align*}
and from Lemma \ref{lemma:mu}
\begin{align*}
    \mu
    &= l_1 e_1 (e_0-1) + l_2 e_2 (e_1-1) + (e_0-1)^2
    = 2k^3 + 16k^2.
\end{align*}
Now, recall the expression of $\phi_f(r)$ from Theorem \ref{theor:phi}:
\begin{align*}
    \phi_f(r)
    &= \frac{1}{2\mu} \left(
        \biggl(2e_0 - 1 + \sum_{j=1}^g  l_j e_j\biggr) r(1-r)
        - \sum_{j=1}^g (n_j -1) \fract{e_j r}
        - \fract{e_0 r} (1 - \fract{e_0 r})
        \right.\\&\left.\hspace{15mm}
        - \sum_{j=1}^g \frac{l_j}{n_j} \fract{e_{j-1} r}(1-\fract{e_{j-1} r})
        + \sum_{j=1}^g \sum_{b=1}^{n_j-1} 2 \fract*{w_j\left(\fract{e_j r} - \frac{b}{n_j}\right)} \I{\left[\frac{b}{n_j},1\right)}\!\left(\fract{e_j r}\right)
    \right).
\end{align*}
%
Then:
\begin{enumerate}[label={\arabic*)}]
    \item The limit of the first term is $r(1-r)$ times
        \begin{align*}
            \frac{1}{2\mu} \biggl(2e_0 - 1 + \sum_{j=1}^g  l_j e_j\biggr)
            &= \frac{1}{2\mu} \biggl(8k - 1 + 2 + (2k^3+1)\biggr)
            = \frac{2k^3 + 8k + 2}{4k^3 + 32k^2}
            \xrightarrow{k\rightarrow +\infty} \frac{1}{2}.
        \end{align*}
    \item The limit of the second term is
        \begin{align*}
            \frac{1}{2\mu} \abs{- \sum_{j=1}^g (n_j -1) \fract{e_j r}}
            &\leqslant \frac{1}{2\mu} \abs{\sum_{j=1}^g (n_j -1)}
            = \frac{\abs{2k-1+2-1}}{4k^3 + 32k^2}
            \xrightarrow{k\rightarrow +\infty} 0.
        \end{align*}
    \item The limit of the third term is
        \begin{align*}
            \frac{\abs{- \fract{e_0 r} (1 - \fract{e_0 r})}}{2\mu}
            &\leqslant \frac{\abs{1}}{2\mu}
            = \frac{1}{4k^3 + 32k^2}
            \xrightarrow{k\rightarrow +\infty} 0.
        \end{align*}
        %
    \item For the limit of the fourth term 
    we separate each summand:
        \begin{itemize}
            \item For $j=1$:
                \begin{align*}
                    \frac{1}{2\mu} \abs{\frac{l_1}{n_1} \fract{e_0 r}(1-\fract{e_0 r}) }
                    &\leqslant \frac{1}{2\mu} \abs{\frac{l_1}{n_1}}
                    = \frac{1}{2\mu} \abs{\frac{1}{2k}}
                    \xrightarrow{k\rightarrow +\infty} 0.
                \end{align*}
            \item For $j=2$ the coefficient is:
                \begin{align*}
                    \frac{1}{2\mu} \left( \frac{l_2}{n_2} \right)
                    &= \frac{1}{2\mu} \left( \frac{2k^3+1}{2} \right)
                    = \frac{2k^3+1}{8k^3 + 64k^2}
                    \xrightarrow{k\rightarrow +\infty} \frac{1}{4}.
                \end{align*}
        \end{itemize}
        Thus the fourth term has limit
        \begin{align*}
            \frac{1}{2\mu} \left( \sum_{j=1}^g \frac{l_j}{n_j} \fract{e_{j-1} r}(1-\fract{e_{j-1} r}) \right)
            \xrightarrow{k\rightarrow +\infty} \frac{1}{4} \fract{e_1 r}(1-\fract{e_1 r})
            = \frac{1}{4} \fract{2 r}(1-\fract{2 r}).
        \end{align*}
    \item The limit of the fifth term is
        \begin{align*}
            &\frac{1}{2\mu} \abs{\sum_{j=1}^g \sum_{b=1}^{n_j-1} 2 \fract*{w_j\left(\fract{e_j r} - \frac{b}{n_j}\right)} \I{\left[\frac{b}{n_j},1\right)}\!\left(\fract{e_j r}\right)}
            \leqslant \frac{1}{2\mu} \abs{\sum_{j=1}^g \sum_{b=1}^{n_j-1} 2}
            = \frac{1}{\mu} \abs{\sum_{j=1}^g (n_j -1)}
            \\&= \frac{\abs{2k-1+2-1}}{2k^3 + 16k^2}
            \xrightarrow{k\rightarrow +\infty} 0.
        \end{align*}
\end{enumerate}

\noindent
In total, for $r\in[0,1)$,
\begin{align*}
    \lim_{k\rightarrow +\infty} \phi_f(r)
    &= \frac{1}{2} r(1-r) - \frac{1}{4} \fract{2 r}(1-\fract{2 r}).
\end{align*}
By the definition of $\phi_f(r)$,
\begin{align*}
    \phi_f(r) &= \int_0^r N_2(s) - D_f^{<1}(s) \dd{s},
    \\
    \lim_{k\rightarrow +\infty} \phi_f(r) &= \int_0^r N_2(s) - \left( \lim_{k\rightarrow +\infty} D_f^{<1}(s)\right) \dd{s}.
\end{align*}
Since in this case $\lim_{k\rightarrow +\infty} \phi_f(r)$ is a continuous function and differentiable almost everywhere, we can recover the corresponding distribution as follows. Let $s\in[0,1)$, then
\begin{align*}
    \dv{}{s} \left(\lim_{k\rightarrow +\infty} \phi_f(s)\right) &= N_2(s) - \left(\lim_{k\rightarrow +\infty} D_f^{<1}(s)\right),
    \\
    \lim_{k\rightarrow +\infty} D_f^{<1}(s)
    &= N_2(s) - \dv{}{s} \left(\lim_{k\rightarrow +\infty} \phi_f(s)\right)
    \\&= s - \dv{}{s} \left( \frac{1}{2} s(1-s) - \frac{1}{4} \fract{2 s}(1-\fract{2 s}) \right)
    \\&= s - \left( \frac{1}{2} (1-2s) - \frac{1}{4} (2-4\fract{2 s}) \right)
    \\&= 2s - \fract{2 s}
    = \floor{2s}
    = \I{\left[\frac{1}{2},1\right)}(s).
\end{align*}
By the symmetry of the Hodge spectral exponents, for any $s$
\begin{equation*}
    \lim_{k\rightarrow +\infty} D_f(s)
    = \I{\left[\frac{1}{2},\frac{3}{2}\right]}(s).
\end{equation*}
\end{proof}

\section{Dominating values for irreducible plane curves}\label{section:dominating}
In this section we will give partial answers to K.~Saito's Question \ref{Q2} on the dominating values for the case of irreducible plane curves. To such purpose we will use Theorem \ref{theor:phi} which states
\begin{align*}
    2\mu\, \phi_f(r)
    &= \left(2e_0 - 1 + \sum_{j=1}^g  l_j e_j\right) r(1-r)
        - \sum_{j=1}^g (n_j -1) \fract{e_j r}
        - \fract{e_0 r} (1 - \fract{e_0 r})
        \\&\quad - \sum_{j=1}^g \frac{l_j}{n_j} \fract{e_{j-1} r}(1-\fract{e_{j-1} r})
        + \sum_{j=1}^g \sum_{b=1}^{n_j-1} 2 \fract*{w_j\left(\fract{e_j r} - \frac{b}{n_j}\right)} \I{\left[\frac{b}{n_j},1\right)}\!\left(\fract{e_j r}\right).
\end{align*}

We will give two lower bounds to this function and calculate the sets where the bound is positive. This way we obtain two intervals where $\phi_f$ is positive.
The following picture illustrates the idea.

\begin{figure}[htbp!]
   \centering
   \includegraphics[scale=0.4]{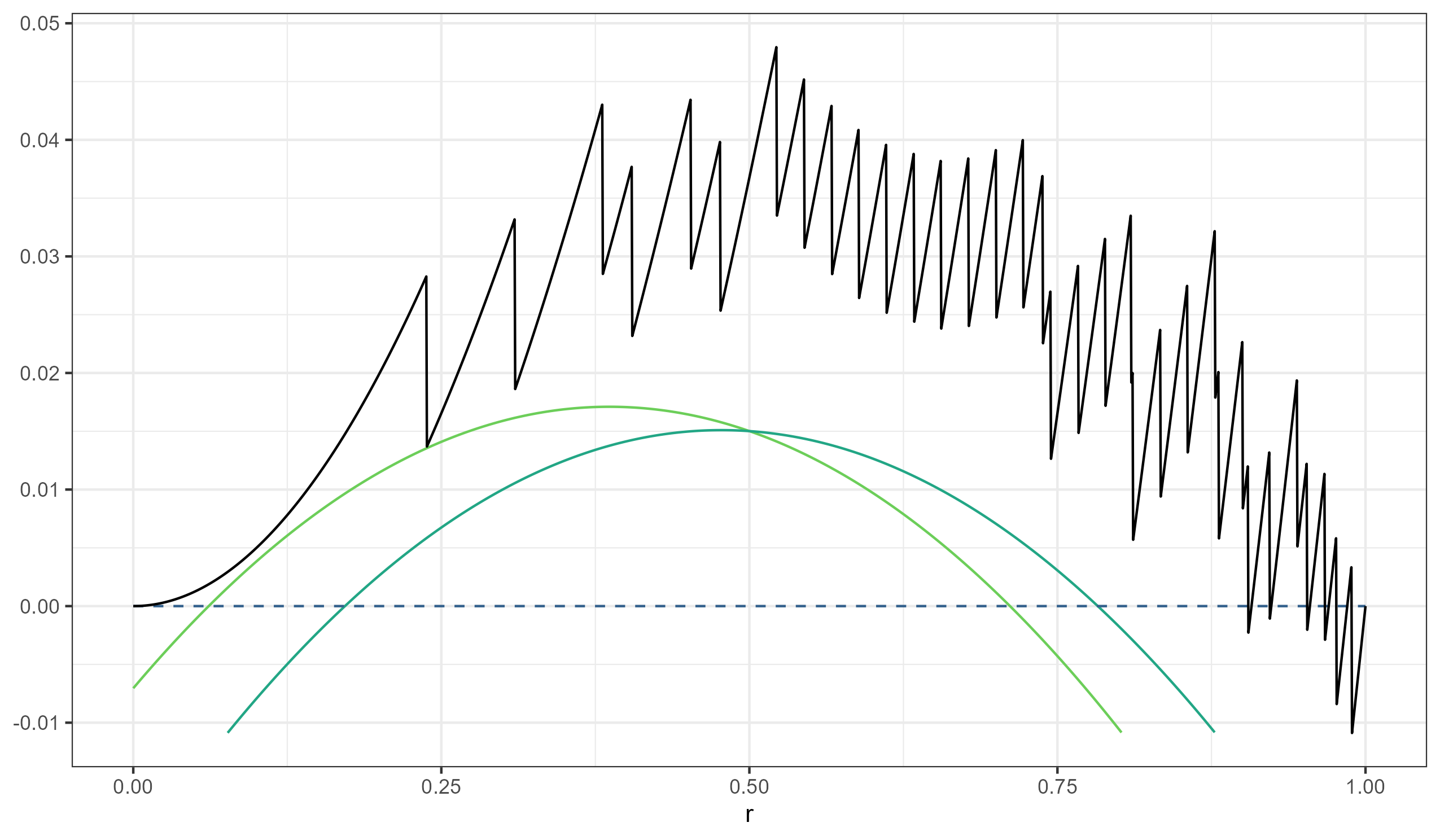}
   \caption{Example of the cumulative difference function $\phi_f(r)$ (black line), the first lower bound (dark green parabola) and the second lower bound (light green parabola), with the value 0 (dashed line) as a reference. The intervals where the parabolas are positive give us sets of dominating values. 
   This plot corresponds to an irreducible plane curve with Puiseux pairs $(3,4), (2,3)$.}
\end{figure}

\vskip 2mm 
We point out that both intervals always intersect but it is not always clear which are the ends of the unique interval of dominating values they provide.
In the main theorem that we anounce next, we also present two other intervals, at both ends of $[0,1)$, where $\phi_f(r)$ is positive and negative respectively. 
These intervals are obtained directly from the smallest and largest Hodge spectral exponents. 

\begin{theorem}\label{theor:intervals}
    Let $f$ be an irreducible plane curve with Puiseux pairs $(n_1,l_1),\dots,(n_g,l_g)$. Then,
    \begin{enumerate}[label={\arabic*)}]
        \item A set of dominating values is given by the interval
            \begin{equation*}
                r\in \left(
                \frac{
                    \left(2e_0 - n_g + \sum_{j=1}^g  l_j e_j\right) - \sqrt{D_1}
                }{
                    2 \left(2e_0 - 1 + \sum_{j=1}^g  l_j e_j\right)
                }
                ,\ 
                \frac{
                    \left(2e_0 - n_g + \sum_{j=1}^g  l_j e_j\right) + \sqrt{D_1}
                }{
                    2 \left(2e_0 - 1 + \sum_{j=1}^g  l_j e_j\right)
                }
                \right)
            \end{equation*}
            with
            \begin{equation*}
                D_1
                = \left(2e_0 - n_g + \sum_{j=1}^g  l_j e_j\right)^2
                    - 4
                    \left(2e_0 - 1 + \sum_{j=1}^g  l_j e_j\right)
                    \left( \sum_{j=1}^{g-1} (n_j -1) + \frac{1}{4} + \sum_{j=1}^g \frac{l_j}{4n_j} \right)
                    >0.
            \end{equation*}
        \item A set of dominating values is given by the interval
            \begin{equation*}
                r\in \left(
                \frac{
                    \left(e_0 + \sum_{j=1}^g  l_j e_j\right) - \sqrt{D_2}
                }{
                    2 \left(2e_0 - 1 + \sum_{j=1}^g  l_j e_j\right)
                }
                ,\ 
                \frac{
                    \left(e_0 + \sum_{j=1}^g  l_j e_j\right) + \sqrt{D_2}
                }{
                    2 \left(2e_0 - 1 + \sum_{j=1}^g  l_j e_j\right)
                }
                \right)
            \end{equation*}
            with
            \begin{equation*}
                D_2
                = \left(e_0 + \sum_{j=1}^g  l_j e_j\right)^2
                    - 4
                    \left(2e_0 - 1 + \sum_{j=1}^g  l_j e_j\right)
                    \left(\frac{1}{4} + \sum_{j=1}^g \frac{l_j}{4n_j} \right)
                    >0.
            \end{equation*}
        \item We have that the leftmost interval of $(0,1)$
            \begin{align*}
                r\in \left(0, \lct(f) \right) = \left(0,\ \frac{1}{e_1} \left(\frac{1}{n_1}+\frac{1}{n_1+l_1}\right)\right)
            \end{align*}
            is a set of dominating values, whereas the rightmost interval
            \begin{align*}
                r\in \left[1 - \frac{1}{n_g w_g}, 1\right)
            \end{align*}
            satisfies $\phi_f(r)<0$.
    \end{enumerate}
\end{theorem}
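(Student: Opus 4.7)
The plan is to bound the formula for $2\mu\,\phi_f(r)$ of Theorem~\ref{theor:phi} from below by a downward-opening quadratic in $r$; the (nonempty) open interval between its roots is then automatically a set of dominating values. Two different bounds give parts~(1) and~(2), while part~(3) follows directly from Corollary~\ref{prop:lct}.

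In both of the quadratic bounds I first discard the final nonnegative sum $\sum_{j,b} 2 \fract*{w_j(\fract{e_j r} - b/n_j)}\,\I{[b/n_j,1)}(\fract{e_j r})$, and then apply the elementary inequalities $\fract{x}(1-\fract{x}) \leq 1/4$ and $\fract{x}\leq 1$ to obtain $-\fract{e_0 r}(1-\fract{e_0 r})\geq -1/4$ and $-\tfrac{l_j}{n_j}\fract{e_{j-1}r}(1-\fract{e_{j-1}r}) \geq -\tfrac{l_j}{4 n_j}$.

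For part~(1), the key observation is that $e_g=1$, so $\fract{e_g r} = r$ for $r\in[0,1)$; this extracts the exact linear contribution $-(n_g-1)r$ from the sum $-\sum_j (n_j-1)\fract{e_j r}$, rather than the crude constant bound. The remaining $\fract{e_j r}$ with $j<g$ are bounded simply by $1$, producing the constant $-\sum_{j=1}^{g-1}(n_j-1)$. Collecting all contributions yields a quadratic inequality of the form $-A r^2 + (2e_0-n_g+\sum l_j e_j)\,r - C_1 < 0$ with $A = 2e_0-1+\sum l_j e_j$ and $C_1 = \sum_{j=1}^{g-1}(n_j-1) + 1/4 + \sum_j l_j/(4n_j)$, whose discriminant is exactly $D_1$ and whose roots match the stated interval. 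For part~(2), instead of isolating $j=g$, I use the uniform bound $\fract{e_j r}\leq e_j r$ together with the telescoping identity $\sum_{j=1}^g (n_j-1)e_j = \sum_{j=1}^g (e_{j-1}-e_j) = e_0 - 1$, giving $-\sum_j(n_j-1)\fract{e_j r} \geq -(e_0-1)\,r$. The resulting quadratic has discriminant $D_2$ and enjoys a cleaner constant term (the $\sum_{j<g}(n_j-1)$ contribution is absent).

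Part~(3) is immediate from Corollary~\ref{prop:lct}. Since $\alpha_1 = \lct(f)$ is the smallest Hodge spectral exponent, for $r\in(0,\lct(f))$ one has $\#\{\alpha_i\leq r\}=0$, hence $\phi_f(r) = \int_0^r s\,\dd{s} = r^2/2 > 0$. Symmetrically, since $\alpha_{\mu/2} = 1-1/(n_g w_g)$ is the largest spectral exponent strictly below $1$, for $r\in[1-1/(n_g w_g),1)$ all $\mu/2$ such exponents are already counted, giving $\phi_f(r) = r^2/2 - 1/2 < 0$. The main obstacle throughout is the algebraic bookkeeping that identifies the quadratic coefficients with the formulas in the statement; in particular, one must track carefully why $n_g$ appears in the linear coefficient of $D_1$ but not in $D_2$, and why the $\sum_{j<g}(n_j-1)$ term is present in the constant of $D_1$ but absent from $D_2$.
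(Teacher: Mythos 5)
Your overall strategy coincides with the paper's: the same two lower bounds for $2\mu\,\phi_f(r)$ (discarding the nonnegative double sum, bounding $\fract{x}(1-\fract{x})\leqslant\frac14$, using $\fract{e_g r}=r$ for the first quadratic and $\fract{e_j r}\leqslant e_j r$ together with $\sum_{j}(n_j-1)e_j=e_0-1$ for the second), and the same direct reading of Corollary \ref{prop:lct} for part (3). Parts of your write-up that are present are correct.

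There is, however, a genuine gap: you never prove that $D_1>0$ and $D_2>0$. This positivity is explicitly part of the statement, and without it the intervals are not even well defined (if $D_i<0$ the parabola $p_i$ has no real roots and is negative everywhere, so the bound $2\mu\,\phi_f(r)\geqslant p_i(r)$ yields nothing). Calling the interval between the roots ``automatically'' nonempty begs precisely this question. The paper settles it by evaluating the quadratics at $r=0,\ \tfrac12,\ 1$: showing $p_i(0)<0$ and $p_i(1)<0$ places both roots in $(0,1)$ (so the interval lies in the domain of $\phi_f$, another point you do not address), and showing $p_i\!\left(\tfrac12\right)>0$ forces $D_i>0$. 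For $p_2$ this last step is short ($p_2(\tfrac12)=\tfrac14\sum_j l_j(e_j-\tfrac{1}{n_j})>0$), but for $p_1$ it reduces to the inequality
\begin{equation*}
  n_g\cdots n_1 - n_g - 2n_{g-1} - \dots - 2n_1 + 2(g-1) \geqslant 0,
\end{equation*}
which requires a separate argument (the paper handles $g=1$ and $g>1$ by a chain of estimates using $n_j\geqslant 2$). This is the only genuinely nontrivial computation in the whole proof, and it is also what yields Corollary \ref{tomari} (that $\tfrac12$ is a dominating value); your proposal omits it entirely. To complete the proof you must supply these evaluations, or some equivalent verification that each $p_i$ takes a positive value in $(0,1)$.
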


\begin{remark}
    Almir\'on and Schulze \cite[Prop. 6]{almiron2022limit} proved that the log-canonical threshold of an irreducible plane curve is a dominating value except for the cases where the curve has semigroup $(2,3)$ or $(2,5)$.
\end{remark}

An immediate consequence is the following:

\begin{corollary}
 For any $r$ in the intervals described in parts $1)$ and $2)$ of Theorem \ref{theor:intervals}  we have
  \begin{equation*}
    r^2 > \frac{2}{\mu} \#\left\{\alpha_i\leqslant r\right\} .
\end{equation*}
\end{corollary}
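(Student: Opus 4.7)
The proof is an almost immediate consequence of Theorem \ref{theor:intervals} combined with the definitions of $\phi_f$ and of dominating values, so the plan is simply to unwind these definitions in the plane curve setting.

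First, I would specialize Definition \ref{def:phi} to $n=1$. Since $N_2(s) = s$ on the interval $[0,1)$, the cumulative difference function becomes
\begin{equation*}
    \phi_f(r) = \int_0^r s\, \dd{s} - \frac{1}{\mu}\#\{\alpha_i \leqslant r\} = \frac{r^2}{2} - \frac{1}{\mu}\#\{\alpha_i \leqslant r\}
\end{equation*}
for $r\in[0,1)$, exactly as recorded at the start of Section \ref{Sec3}. Next, I would invoke Definition \ref{def:dominating}: a value $r$ is dominating precisely when $\phi_f(r) > 0$, which by the displayed identity is equivalent to
\begin{equation*}
    \frac{r^2}{2} > \frac{1}{\mu}\#\{\alpha_i \leqslant r\},
\end{equation*}
i.e.\ to $r^2 > \frac{2}{\mu}\#\{\alpha_i \leqslant r\}$.

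To conclude, I would simply apply parts $1)$ and $2)$ of Theorem \ref{theor:intervals}, which assert that every $r$ in the two stated intervals is a dominating value. Multiplying the inequality $\phi_f(r)>0$ by $2$ and rearranging yields the stated bound on $\#\{\alpha_i\leqslant r\}$ for all such $r$. There is no obstacle here: the content of the corollary is entirely a rewriting of the conclusion of Theorem \ref{theor:intervals} in counting form, and part $3)$ is deliberately excluded from the statement because its right endpoint gives a region where $\phi_f(r)<0$ and its left endpoint is treated separately in Corollary \ref{tomari}/the discussion of the log-canonical threshold.
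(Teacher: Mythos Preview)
Your argument is correct and matches the paper exactly: the corollary is stated there as ``an immediate consequence'' with no further proof, and your unwinding of $\phi_f(r)=\tfrac{r^2}{2}-\tfrac{1}{\mu}\#\{\alpha_i\leqslant r\}$ together with the defining inequality $\phi_f(r)>0$ from Theorem~\ref{theor:intervals} is precisely that immediate step. (Your closing remark about part~3) is slightly off---Corollary~\ref{tomari} concerns $r=\tfrac12$, not the interval $(0,\lct(f))$---but this is extraneous to the proof itself.)
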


In the course of the proof of Theorem \ref{theor:intervals} we will obtain an alternative proof of the result Tomari \cite{tomari1993inequality} but only for irreducible curves.

\begin{corollary} \label{tomari}
 Let $f$ define an irreducible plane curve. Then $\frac{1}{2}$ is a dominating value and thus
  \begin{equation*}
    \#\left\{\alpha_i\leqslant \frac{1}{2}\right\} < \frac{\mu}{8}.
\end{equation*}
\end{corollary}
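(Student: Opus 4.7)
The strategy is to exploit the explicit expression for $\phi_f(r)$ from Theorem~\ref{theor:phi} by controlling all the oscillatory contributions, reducing the problem to a quadratic inequality in $r$. Parts~1) and~2) will correspond to two different ways of bounding the sawtooth sum $\sum_{j=1}^g (n_j-1)\fract{e_j r}$, which produce the two discriminants $D_1$ and $D_2$. Part~3) will be read off directly from Corollary~\ref{prop:lct}.

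Set $A := 2e_0 - 1 + \sum_{j=1}^g l_j e_j$, so that $A r(1-r)$ is the principal (downward) parabola in the formula for $2\mu\,\phi_f(r)$. In both of the first two parts I discard the non-negative double sum containing the factors $2\fract{w_j(\cdots)}$ and apply the universal bound $\fract{x}(1-\fract{x})\leq 1/4$ to each of $\fract{e_0 r}(1-\fract{e_0 r})$ and $\fract{e_{j-1} r}(1-\fract{e_{j-1} r})$.

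For Part~1), I isolate the index $j=g$ in $\sum_{j=1}^g (n_j-1)\fract{e_j r}$, use $e_g=1$ so that $\fract{e_g r}=r$ on $[0,1)$, and bound $\fract{e_j r}\leq 1$ for $j<g$. This produces the lower bound
\[
2\mu\,\phi_f(r)\ \geq\ -A r^2 + \bigl(A - (n_g-1)\bigr) r - \Bigl(\sum_{j=1}^{g-1}(n_j-1) + \tfrac14 + \sum_{j=1}^g \tfrac{l_j}{4 n_j}\Bigr),
\]
and, since $A-(n_g-1) = 2e_0 - n_g + \sum_j l_j e_j$, the quadratic formula yields exactly the endpoints claimed in Part~1). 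For Part~2), I exploit the telescoping identity $\sum_{j=1}^g (n_j-1) e_j = e_0 - e_g = e_0 - 1$ together with $\lfloor e_j r\rfloor \geq 0$ to obtain $\sum_{j=1}^g (n_j-1)\fract{e_j r} = (e_0-1) r - \sum_{j=1}^g (n_j-1)\lfloor e_j r\rfloor \leq (e_0-1) r$; combined with the same bounds on the remaining terms this gives
\[
2\mu\,\phi_f(r)\ \geq\ -A r^2 + \bigl(A - (e_0-1)\bigr) r - \Bigl(\tfrac14 + \sum_{j=1}^g \tfrac{l_j}{4 n_j}\Bigr),
\]
whose roots are the endpoints in Part~2). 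Part~3) is then immediate from Corollary~\ref{prop:lct}: for $r\in(0,\lct(f))$ we have $r<\alpha_1$, so $\#\{\alpha_i\leq r\}=0$ and $\phi_f(r) = r^2/2 > 0$, while for $r\in[1-1/(n_g w_g),1)$ we have $r\geq \alpha_{\mu/2}$, hence all $\mu/2$ spectral numbers in $(0,1)$ are $\leq r$ and $\phi_f(r) = r^2/2 - 1/2 < 0$.

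The one point that still requires verification is that the discriminants $D_1$ and $D_2$ are strictly positive, so that the two intervals are genuinely non-empty. The natural check is evaluation at $r=1/2$: for the second lower bound, using the relation $n_j e_j = e_{j-1}$, the right-hand side collapses after a brief algebraic manipulation to $\sum_{j=1}^g l_j (e_{j-1}-1)/(4 n_j)$, which is strictly positive because each $e_{j-1}\geq 2$. This simultaneously forces $D_2>0$, places $1/2$ inside the second interval, and yields Corollary~\ref{tomari}; a parallel computation handles $D_1$. Establishing positivity of the discriminants and showing that the resulting intervals actually lie inside $[0,1)$ are the only non-mechanical steps in the argument.
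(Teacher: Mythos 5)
Your proposal is correct and follows essentially the same route as the paper: it derives the same two quadratic lower bounds $p_1(r)$ and $p_2(r)$ for $2\mu\,\phi_f(r)$ from Theorem~\ref{theor:phi} by the same term-by-term estimates, and obtains the corollary by checking positivity at $r=\tfrac12$, where your simplification $p_2(\tfrac12)=\sum_{j=1}^g l_j(e_{j-1}-1)/(4n_j)>0$ is exactly the paper's computation $\tfrac14\sum_j l_j(e_j-\tfrac1{n_j})>0$ rewritten via $e_jn_j=e_{j-1}$. The only (inessential) omission is the explicit verification of $p_1(\tfrac12)>0$, which the paper carries out but which is not needed for the corollary since $p_2(\tfrac12)>0$ already suffices.
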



\subsubsection{Interval of dominating values using the first lower bound of \texorpdfstring{$\phi_f$}{ϕ\_f}}

We will now give a lower bound of $\phi_f(r)$, from which we will obtain an interval of dominating values. We use the following bounds:   $\fract{e_g r} = r$, \;  $ \fract{e_j r} \leqslant 1$, \;  $  \fract{e_j r} (1-\fract{e_j r}) \leqslant \frac{1}{4} $ \; and $$    \fract*{w_j\left(\fract{e_j r} - \frac{b}{n_j}\right)} \I{\left[\frac{b}{n_j},1\right)}\!\left(\fract{e_j r}\right) \geqslant 0 .$$
%
Applying these bounds to the previous expression of $\phi_f(r)$ we get
\begin{align*}
    2\mu\, \phi_f(r)
    &= \left(2e_0 - 1 + \sum_{j=1}^g  l_j e_j\right) r(1-r)
        - \sum_{j=1}^g (n_j -1) \fract{e_j r}
        - \fract{e_0 r} (1 - \fract{e_0 r})
        \\&\qquad - \sum_{j=1}^g \frac{l_j}{n_j} \fract{e_{j-1} r}(1-\fract{e_{j-1} r})
        + \sum_{j=1}^g \sum_{b=1}^{n_j-1} 2 \fract*{w_j\left(\fract{e_j r} - \frac{b}{n_j}\right)} \I{\left[\frac{b}{n_j},1\right)}\!\left(\fract{e_j r}\right)
    \\&\geqslant \left(2e_0 - 1 + \sum_{j=1}^g  l_j e_j\right) r(1-r)
        - (n_g -1) r
        - \sum_{j=1}^{g-1} (n_j -1)
        - \frac{1}{4}
        - \sum_{j=1}^g \frac{l_j}{n_j} \frac{1}{4}
    \\&= -\left(2e_0 - 1 + \sum_{j=1}^g  l_j e_j\right) r^2
        + \left(2e_0 - n_g + \sum_{j=1}^g  l_j e_j\right) r
        - \left( \sum_{j=1}^{g-1} (n_j -1) + \frac{1}{4} + \sum_{j=1}^g \frac{l_j}{4n_j} \right)
    \\&\eqqcolon p_1(r).
\end{align*}
Therefore, $  2\mu\, \phi_f(r) \geqslant p_1(r).$
We will now find the interval where $p_1(r)>0$:

\vskip 2mm
\noindent Evaluating $p_1(r)$ at $r=0$,
\begin{align*}
    p_1(0)
    = - \left( \sum_{j=1}^{g-1} (n_j -1)
    + \frac{1}{4}
    + \sum_{j=1}^g \frac{l_j}{4n_j} \right)
    < - \left(0 + \frac{1}{4} + 0\right)
    < 0.
\end{align*}
Evaluating at $r=1$,
\begin{align*}
    p_1(1)
    &= -\left(2e_0 - 1 + \sum_{j=1}^g  l_j e_j\right)
        + \left(2e_0 - n_g + \sum_{j=1}^g  l_j e_j\right)
        - \left( \sum_{j=1}^{g-1} (n_j -1) + \frac{1}{4} + \sum_{j=1}^g \frac{l_j}{4n_j} \right)
    \\&= (1 - n_g)
        - \left( \sum_{j=1}^{g-1} (n_j -1)
        + \frac{1}{4}
        + \sum_{j=1}^g \frac{l_j}{4n_j} \right)
   < 0.
\end{align*}
Evaluating at $r=\frac{1}{2}$,
\begin{align*}
    p_1\!\left(\frac{1}{2}\right)
    &= -\left(2e_0 - 1 + \sum_{j=1}^g  l_j e_j\right) \frac{1}{4}
        + \left(2e_0 - n_g + \sum_{j=1}^g  l_j e_j\right) \frac{1}{2}
        - \left( \sum_{j=1}^{g-1} (n_j -1) + \frac{1}{4} + \sum_{j=1}^g \frac{l_j}{4n_j} \right)
    \\&= \frac{1}{4} \left(-2e_0 + 1 - \sum_{j=1}^g  l_j e_j + 4e_0 - 2n_g + 2\sum_{j=1}^g  l_j e_j
        -\sum_{j=1}^{g-1} 4(n_j -1) - 1 - \sum_{j=1}^g \frac{l_j}{n_j} \right)
    \\&= \frac{1}{4} \left(2e_0 - 2n_g 
        -\sum_{j=1}^{g-1} 4(n_j -1) + \sum_{j=1}^g  l_j \left(e_j - \frac{1}{n_j} \right) \right)
    \\&> \frac{1}{2} \left(e_0 - n_g - \sum_{j=1}^{g-1} 2n_j + 2(g-1) \right)
    = \frac{1}{2} \left(n_g\cdots n_1 - n_g - 2n_{g-1} - \dots - 2n_1 + 2(g-1) \right).
\end{align*}
This last term is 
greater or equal than zero since:
\begin{itemize}
    \item If $g>1$, then
        \begin{align*}
            &n_g\cdots n_1 - n_g - 2n_{g-1} - \dots - 2n_1 + 2(g-1)
            \\&\geqslant (n_g-2)n_{g-1}\cdots n_1 - n_g +2n_{g-1}\cdots n_1 - 2n_{g-1} - \dots - 2n_1 + 2
            \\&\geqslant (n_g-2)\cdot 2 - n_g + 2 (n_{g-1}\cdots n_1 - n_{g-1} - \dots - n_1) + 2
            \\&\geqslant 2n_g - 4 - n_g + 0 + 2
          = n_g - 2
           \geqslant 0.
        \end{align*}
    \item If $g=1$, then
        \begin{align*}
            &n_g\cdots n_1 - n_g - 2n_{g-1} - \dots - 2n_1 + 2(g-1)
           = n_1 - n_1 + 0
            = 0.
        \end{align*}
\end{itemize}
Therefore,
$
    p_1\!\left(\frac{1}{2}\right) > 0.
$
\begin{remark}
    This proves Corollary \ref{tomari}.
\end{remark}

Since $p_1(r)$ is a quadratic function, $p_1(0)<0$, $p_1\!\left(\frac{1}{2}\right) > 0$ and $p_1(1)<0$, we conclude that the graph of $p_1(r)$ is an inverted parabola with a root in $\left(0,\frac{1}{2}\right)$ and a root in $\left(\frac{1}{2},1\right)$. More concretely, $p_1(r)>0$ and therefore $\phi_f(r)>0$ in the interval
\begin{equation*}
    r\in \left(
    \frac{
    \left(2e_0 - n_g + \sum_{j=1}^g  l_j e_j\right) - \sqrt{D_1}
    }{
    2 \left(2e_0 - 1 + \sum_{j=1}^g  l_j e_j\right)
    }
    ,
    \frac{
    \left(2e_0 - n_g + \sum_{j=1}^g  l_j e_j\right) + \sqrt{D_1}
    }{
    2 \left(2e_0 - 1 + \sum_{j=1}^g  l_j e_j\right)
    }
    \right)
\end{equation*}
with
\begin{equation*}
    D_1 = \left(2e_0 - n_g + \sum_{j=1}^g  l_j e_j\right)^2
        - 4
        \left(2e_0 - 1 + \sum_{j=1}^g  l_j e_j\right)
        \left( \sum_{j=1}^{g-1} (n_j -1) + \frac{1}{4} + \sum_{j=1}^g \frac{l_j}{4n_j} \right)
        >0.
\end{equation*}

\subsubsection{Interval of dominating values using the second lower bound of \texorpdfstring{$\phi_f$}{ϕ\_f}}

We will now give a different lower bound of $\phi_f(r)$, from which we will obtain another interval of dominating values.
If instead of bounding $\fract{e_j r} \leqslant 1$ we bound it by $\fract{e_j r} \leqslant e_j r$, we will get a better bound for small values of $r$ (at least for $r < \frac{1}{e_1}$). Thus, now we use the following bounds:
\begin{align*}
    \fract{e_j r} &\leqslant e_j r ,\\
    \fract{e_j r} (1-\fract{e_j r}) &\leqslant \frac{1}{4} ,\\
    \fract*{w_j\left(\fract{e_j r} - \frac{b}{n_j}\right)} \I{\left[\frac{b}{n_j},1\right)}\!\left(\fract{e_j r}\right) &\geqslant 0.
\end{align*}
Then we get
\begin{align*}
    2\mu\, \phi_f(r)
    &= \left(2e_0 - 1 + \sum_{j=1}^g  l_j e_j\right) r(1-r)
        - \sum_{j=1}^g (n_j -1) \fract{e_j r}
        - \fract{e_0 r} (1 - \fract{e_0 r})
        \\&\qquad - \sum_{j=1}^g \frac{l_j}{n_j} \fract{e_{j-1} r}(1-\fract{e_{j-1} r})
        + \sum_{j=1}^g \sum_{b=1}^{n_j-1} 2 \fract*{w_j\left(\fract{e_j r} - \frac{b}{n_j}\right)} \I{\left[\frac{b}{n_j},1\right)}\!\left(\fract{e_j r}\right)
    \\&\geqslant \left(2e_0 - 1 + \sum_{j=1}^g  l_j e_j\right) r(1-r)
        - \sum_{j=1}^g (n_j -1) e_j r
        - \frac{1}{4}
        - \sum_{j=1}^g \frac{l_j}{n_j} \frac{1}{4}
    \\&= -\left(2e_0 - 1 + \sum_{j=1}^g  l_j e_j\right) r^2
        + \left(2e_0 - 1 + \sum_{j=1}^g  l_j e_j - \sum_{j=1}^g (n_j -1) e_j\right) r
        - \left(\frac{1}{4} + \sum_{j=1}^g \frac{l_j}{4n_j} \right)
    \\&= -\left(2e_0 - 1 + \sum_{j=1}^g  l_j e_j\right) r^2
        + \left(e_0 + \sum_{j=1}^g  l_j e_j\right) r
        - \left(\frac{1}{4} + \sum_{j=1}^g \frac{l_j}{4n_j} \right)
    \\&\eqqcolon p_2(r).
\end{align*}
Therefore, $    2\mu\, \phi_f(r) \geqslant p_2(r).$
We will now find the interval where $p_2(r)>0$:


\noindent
Evaluating $p_2(r)$ at $r=0$,
\begin{align*}
    p_2(0)
    &= - \left(\frac{1}{4} + \sum_{j=1}^g \frac{l_j}{4n_j} \right)
    < 0.
\end{align*}
Evaluating at $r=1$
\begin{align*}
    p_2(1)
    &= -\left(2e_0 - 1 + \sum_{j=1}^g  l_j e_j\right)
        + \left(e_0 + \sum_{j=1}^g  l_j e_j\right)
        - \left(\frac{1}{4} + \sum_{j=1}^g \frac{l_j}{4n_j} \right)
    \\&= -(e_0 - 1) - \left(\frac{1}{4} + \sum_{j=1}^g \frac{l_j}{4n_j} \right)
    < 0.
\end{align*}
Evaluating at $r=\frac{1}{2}$
\begin{align*}
    p_2\!\left(\frac{1}{2}\right)
    &= -\left(2e_0 - 1 + \sum_{j=1}^g  l_j e_j\right) \frac{1}{4}
        + \left(e_0 + \sum_{j=1}^g  l_j e_j\right) \frac{1}{2}
        - \left(\frac{1}{4} + \sum_{j=1}^g \frac{l_j}{4n_j} \right)
    \\&= \frac{1}{4} \left(
        -2e_0 + 1 - \sum_{j=1}^g  l_j e_j
        + 2e_0 + 2\sum_{j=1}^g  l_j e_j
        - 1 - \sum_{j=1}^g \frac{l_j}{n_j}
    \right)
    \\&= \frac{1}{4} \left(
        \sum_{j=1}^g  l_j \left(e_j - \frac{1}{n_j}\right)
    \right)
    > 0.
\end{align*}
\begin{remark}
    This also proves Corollary \ref{tomari}.
\end{remark}

Since $p_2(r)$ is a quadratic function, $p_2(0)<0$, $p_2\!\left(\frac{1}{2}\right) > 0$ and $p_2(1)<0$, we conclude that the graph of $p_2(r)$ is an inverted parabola with a root in $\left(0,\frac{1}{2}\right)$ and a root in $\left(\frac{1}{2},1\right)$. More concretely, $p_2(r)>0$ and therefore $\phi_f(r)>0$ in the interval
\begin{equation*}
    r\in \left(
    \frac{
    \left(e_0 + \sum_{j=1}^g  l_j e_j\right) - \sqrt{D_2}
    }{
    2 \left(2e_0 - 1 + \sum_{j=1}^g  l_j e_j\right)
    }
    ,
    \frac{
    \left(e_0 + \sum_{j=1}^g  l_j e_j\right) + \sqrt{D_2}
    }{
    2 \left(2e_0 - 1 + \sum_{j=1}^g  l_j e_j\right)
    }
    \right)
\end{equation*}
with
\begin{equation*}
    D_2 = \left(e_0 + \sum_{j=1}^g  l_j e_j\right)^2
        - 4
        \left(2e_0 - 1 + \sum_{j=1}^g  l_j e_j\right)
        \left(\frac{1}{4} + \sum_{j=1}^g \frac{l_j}{4n_j} \right)
        >0.
\end{equation*}

\subsubsection{Intervals of constant sign at both ends of \texorpdfstring{$[0,1)$}{[0,1)}}

 These intervals come directly from the first and the last Hodge spectral exponents 
in the set given in Corollary \ref{prop:lct} and will not use the explicit expression for $\phi_f(r)$. 
For the leftmost interval, the log-canonical threshold is
\begin{equation*}
    \alpha_1=\lct(f)
    = \frac{1}{e_1} \left(\frac{1}{n_1}+\frac{1}{n_1+l_1}\right).
\end{equation*}
 Thus, $\#\{\alpha_i < \alpha_1\}=0$ and $\phi_f(r)>0$ in the interval $r\in \bigl( 0, \alpha_1 \bigr)$.
For the rightmost interval, the largest  Hodge spectral exponent is 
\begin{align*}
  \alpha_{\mu/2}=   1 - \frac{1}{e_{g-1} w_g}
    = 1 - \frac{1}{n_g w_g}
\end{align*}
and thus we have $\phi_f(r)<0$ in the interval
$
   r\in \left[1 - \alpha_{\mu/2}, 1\right).
$


\nocite{*}
\bibliographystyle{amsalpha} 
\providecommand{\bysame}{\leavevmode\hbox to3em{\hrulefill}\thinspace}
\providecommand{\MR}{\relax\ifhmode\unskip\space\fi MR }
\providecommand{\MRhref}[2]{%
  \href{http://www.ams.org/mathscinet-getitem?mr=#1}{#2}
}
\providecommand{\href}[2]{#2}


\end{document}